\def\X{X}
\def\Y{Y}
\def\P{P}
\def\lmlt#1{\mathrm{LMlt}(#1)}
\def\sym#1{\mathbb S_{#1}}
\def\aut#1{\mathrm{Aut}(#1)}
\def\ker#1{\mathrm{ker}(#1)}
\def\gal#1{\mathcal{Q}(#1)}
\def\setof#1#2{\{#1\,\mid\,#2\}}
\def\genof#1#2{\langle#1\,\mid\,#2\rangle}
\def\ldiv{\backslash}
\def\lcm{\mathrm{lcm}}
\def\gcd{\mathrm{gcd}}
\begin{document}

\markboth{Marco Bonatto and Petr Vojt\v{e}chovsk\'y}
{Simply connected latin quandles}

\catchline{}{}{}{}{}

\title{SIMPLY CONNECTED LATIN QUANDLES}

\author{MARCO BONATTO}

\address{KA MFF UK, Charles University\\Sokolovsk\'{a} 83, 18675 Praha, Czech Republic}

\author{PETR VOJT\v{E}CHOVSK\'Y}
\address{Department of Mathematics, University of Denver\\2390 S York St, Denver, Colorado 80208, U.S.A.}

\maketitle

\begin{abstract}
A (left) quandle is connected if its left translations generate a group that acts transitively on the underlying set. In 2014, Eisermann introduced the concept of quandle coverings, corresponding to constant quandle cocycles of Andruskiewitsch and Gra\~{n}a. A connected quandle is \emph{simply connected} if it has no nontrivial coverings, or, equivalently, if all its second constant cohomology sets with coefficients in symmetric groups are trivial.

In this paper we develop a combinatorial approach to constant cohomology. We prove that connected quandles that are affine over cyclic groups are simply connected (extending a result of Gra\~{n}a for quandles of prime size) and that finite doubly transitive quandles of order different from $4$ are simply connected. We also consider constant cohomology with coefficients in arbitrary groups.
\end{abstract}

\keywords{Quandle, connected quandle, latin quandle, quandle cohomology, non-abelian cohomology, constant cocycle, simply connected quandle, covering.}

\ccode{Mathematics Subject Classification 2000: Primary 05C38, 15A15; Secondary 05A15, 15A18.}


\section{Introduction}

Quandles were introduced by Joyce \cite{J} in 1982 as algebraic objects whose elements can be used to color oriented knots. In more detail, let $K$ be an oriented knot and $X=(X,\triangleright,\overline\triangleright)$ a set with two binary operations. Given a diagram of $K$, an assignment of elements of $X$  to arcs of $K$ is \emph{consistent} if the relations
\begin{displaymath}
\begin{tikzpicture}[thick]
    \draw[->] (1,-1) -- (-1,1);
    \draw (-1,-1) -- (-0.15,-0.15);
    \draw[->] (0.15,0.15) -- (1,1);
    \node at (-1.5,1) {$x$};
    \node at (1.5,1) {$y$};
    \node at (-1.75,-1) {$x\triangleright y$};
    \draw (6,-1) -- (5.15,-0.15);
    \draw[->] (4.85,0.15) -- (4,1);
    \draw[->] (4,-1) -- (6,1);
    \node at (3.5,1) {$y$};
    \node at (6.5,1) {$x$};
    \node at (6.75,-1) {$x\,\overline\triangleright\, y$};
\end{tikzpicture}
\end{displaymath}
are satisfied at every crossing of the diagram. If, in addition, the assignment remains consistent when Reidemeister moves are applied to the diagram of $K$ in all possible ways, we say that the assignment is a \emph{coloring of $K$}. We denote by $\mathrm{col}_X(K)$ the number of colorings of $K$ by elements of $X$ in which more than one element is used.

Whether $\mathrm{col}_X(K)\ne 0$ is a delicate question that depends on both $K$ and $X=(X,\triangleright,\overline\triangleright)$. However, when the consistency conditions imposed by Reidemeister moves are universally quantified, they force $\overline\triangleright$ to be the inverse operation to $\triangleright$ (in the sense that $x\triangleright(x\,\overline\triangleright\, y)=y$ and $x\,\overline\triangleright\,(x\triangleright y)=y$) and they force precisely the quandle axioms onto $\triangleright$. In particular, if $(X,\triangleright)$ is a quandle and there is a consistent assignment of elements of $X$ to arcs of a diagram of $K$, it is also a coloring of $K$. It is therefore customary to color arcs of oriented knots by elements of quandles.

For an oriented knot $K$, the knot quandle of $K$ is the quandle freely generated by the oriented arcs of $K$ subject only to the above crossing relations. It was shown early on by Joyce \cite{J} and Matveev \cite{Matveev} that knot quandles are complete invariants of oriented knots up to mirror image and reversal of orientation.

It has been conjectured that for any finite collection $\mathcal K$ of oriented knots there exists a finite collection of finite quandles $X_1$, $\dots$, $X_n$ such that the $n$-tuples $(\mathrm{col}_{X_1}(K),\dots,\mathrm{col}_{X_n}(K))$ distinguish the knots of $\mathcal K$ up to mirror image and reversal of orientation. (See \cite{CESY} for a more precise formulation.) This conjecture has been verified in \cite{CESY} for all prime knots with at most $12$ crossings, using a certain collection of $26$ quandles. Many oriented knots can be distinguished by a coarser invariant, namely by merely checking whether $\mathrm{col}_X(K)\ne 0$ \cite{Davidknots, K}.

Whenever an oriented knot $K$ is colored by a quandle $X$, the elements of $X$ actually used in the coloring form a connected subquandle of $X$. Consequently, for the purposes of quandle colorings, it is sufficient to consider connected quandles. Although far from settled, the theory of connected quandles is better understood than the general case \cite{HSV}, and connected quandles have been classified up to size $47$ \cite{HSV, Vendramin_connected}.

\medskip

Our work is primarily concerned with simply connected quandles which can be defined in several equivalent ways.

In a long paper \cite{E}, Eisermann developed the theory of quandle coverings and offered a complete categorical characterization of coverings for a given quandle. In \cite{AG}, Andruskiewitsch and Gra\~{n}a introduced an extension theory for quandles. Their (dynamical) quandle cocycles can be used to color arcs of oriented knots, giving rise to knot invariants \cite{CEGS}. (These were first defined in \cite{CJKLS} as an analog of the Dijkgraaf-Witten invariants for 3-manifolds \cite{DW}.) The quandle cocycle condition (see Definition \ref{Df:QC}) is rather difficult to work with, involving a $3$-parameter mapping. Constant quandle cocycles are precisely those quandle cocycles in which one parameter is superfluous (see Definition \ref{Df:CQC}). Even constant quandle cocycles yield powerful knot invariants \cite[Section 5]{CEGS}.

The following conditions are equivalent for a connected quandle $X$ (see Proposition \ref{Pr:SC}):
\begin{itemize}
\item the fundamental group of $X$ is trivial,
\item every covering of $X$ is equivalent to the trivial covering of $X$ over some set $S$,
\item for every set $S$, the second constant cohomology set of $X$ with coefficients in the symmetric group $\sym{S}$ is trivial.
\end{itemize}
If a connected quandle $X$ satisfies any of these equivalent conditions, we say that $X$ is \emph{simply connected}.

\medskip

In this paper we develop a combinatorial approach to constant cohomology with emphasis on simply connected quandles. From an algebraic point of view, our main result is as follows:

\begin{theorem}\label{Th:Main}
Let $X$ be a finite connected quandle that is affine over a cyclic group, or a finite doubly transitive quandle of size different from $4$. Then $X$ is simply connected.
\end{theorem}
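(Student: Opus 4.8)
The plan is to treat the two families separately, in both cases reducing — via Proposition \ref{Pr:SC} — to the assertion that every constant quandle $2$-cocycle on $X$ (Definition \ref{Df:CQC}) is a coboundary. Concretely, write a constant cocycle as a map $\beta\colon X\times X\to G$ with $G$ an arbitrary group (it suffices to treat $G=\sym S$), so that the defining conditions are $\beta_{x,x}=e$ together with
\begin{equation}\label{eq:plan-coc}
\beta_{x,\,y\triangleright z}\,\beta_{y,z}=\beta_{x\triangleright y,\,x\triangleright z}\,\beta_{x,z}\qquad(x,y,z\in X),
\end{equation}
and note that $\beta$ is a coboundary precisely when $\beta_{x,y}=\gamma_{x\triangleright y}^{-1}\gamma_y$ for some $\gamma\colon X\to G$. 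Thus simple connectedness amounts to solving (\ref{eq:plan-coc}) only by coboundaries. The first step in each case is to spend the coboundary freedom to \emph{normalize} $\beta$ along a transitive family of translations supplied by connectedness, and then to feed the normalized values back into (\ref{eq:plan-coc}) to force the remaining values to collapse.

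For the affine case let $X=\mathrm{Aff}(\mathbb Z_n,t)$, so $x\triangleright y=(1-t)x+ty$ with $t$ a unit and, by connectedness, $1-t$ a unit as well. The leverage here is that the displacement group $\trans{X}$ is the \emph{cyclic} group of all translations of $\mathbb Z_n$, and $\lmlt{X}=\trans{X}\rtimes\langle t\rangle$. I would use the coboundary freedom to trivialize $\beta$ on a spanning family of pairs generating the translation action of $\trans{X}$; the obstruction to doing so is a product of $\beta$-values taken around each $\langle t\rangle$-orbit of $\mathbb Z_n$. Because $\mathbb Z_n$ is generated by a single element, these orbit products are governed by essentially one relation, and I expect (\ref{eq:plan-coc}) to force each such product to be trivial, after which the normalized $\beta$ propagates to $e$ everywhere. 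This is exactly the mechanism behind Gra\~{n}a's prime-size theorem; the new content is the bookkeeping over the $\langle t\rangle$-orbits when $n$ is composite and $\mathbb Z_n$ has zero divisors.

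For the doubly transitive case I would exploit that $\lmlt{X}$ acts $2$-transitively, so that all pairs $(x,y)$ with $x\ne y$ lie in a single orbit and the conjugation action of $\lmlt{X}$ transports any instance of (\ref{eq:plan-coc}) to any other. After normalizing $\beta$ at a base point, $2$-transitivity collapses the cocycle to a very small amount of data, and the conditions (\ref{eq:plan-coc}) evaluated on nondegenerate triples $(x,y,z)$ of pairwise \emph{distinct} elements should then force $\beta$ to be a coboundary. The point where this breaks is the supply of such triples: the propagation requires enough distinct elements to realize three pairwise-distinct configurations simultaneously, and this is exactly where $|X|=4$ is special. Indeed the doubly transitive quandle of size $4$, namely $\mathrm{Aff}(\mathbb Z_2^2,\phi)$ with $\phi$ of order $3$, is \emph{not} simply connected, so any correct argument must degenerate precisely here; the small orders $|X|\in\{2,3\}$ I would dispose of directly.

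I expect the main obstacle to be the doubly transitive case, and within it the clean isolation of the order-$4$ exception. The affine computation is essentially a one-variable induction once $\trans{X}$ is identified as cyclic, but the doubly transitive argument must either invoke the structure of finite $2$-transitive quandles or run a self-contained count of the solutions of (\ref{eq:plan-coc}) that is uniform in $|X|\ge 5$ yet genuinely fails at $|X|=4$. Producing a single combinatorial argument that detects the threshold $4$ without descending into a case-by-case classification is the delicate part.
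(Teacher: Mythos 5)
Your overall shape (normalize using the coboundary freedom, then propagate triviality through the cocycle identity, treating the two families separately with order $4$ as a genuine exception) matches the paper's combinatorial proof, but both of your pivotal steps are left as expectations rather than arguments, and one of them rests on a premise that is false. In the doubly transitive case you claim that, after normalization, ``$2$-transitivity collapses the cocycle to a very small amount of data'' because the action of $\lmlt{X}$ transports instances of the cocycle identity to one another. Transporting \emph{instances of the identity} is not the same as transporting \emph{values of $\beta$}: a normalized cocycle is invariant under $L_u\times L_u$ for the base point $u$ (and under two further specific bijections of $X\times X$), but \emph{not} under the diagonal action of an arbitrary element of $\lmlt{X}$, since \eqref{CCC} gives $\beta(xy,xz)=\beta(x,yz)\beta(y,z)\beta(x,z)^{-1}$, which differs from $\beta(y,z)$ in general. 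The order-$4$ quandle $\gal{\mathbb Z_2^2,\alpha}$ makes this concrete: it is doubly transitive, yet a normalized cocycle takes two distinct values on pairs of distinct elements (Example \ref{Ex:Details}), so no amount of $2$-transitive transport can collapse the data. This is precisely why the paper must instead invoke Vendramin's classification ($X\cong\gal{\mathbb Z_q^n,\alpha}$ with $\alpha$ an $(|X|-1)$-cycle, Theorem \ref{Th:DoublyTransitiveQuandle}), compute the orbit sizes of three explicit bijections $f$, $g$, $h$ on $X\times X$ under which normalized cocycles \emph{are} invariant, and run a counting argument linking the orbits --- and it is exactly in that count, not in a shortage of distinct triples, that the threshold $|X|=4$ appears.

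The affine case has the same character of gap. Your ``obstruction is a product of $\beta$-values around each $\langle t\rangle$-orbit, and I expect the cocycle condition to force each such product to be trivial'' is the entire content of the theorem, not a reduction of it: for $\gal{\mathbb Z_2^2,\alpha}$ the analogous obstruction survives, so whatever kills it must use cyclicity in an essential, explicit way. The paper's mechanism is concrete: in an affine quandle the third invariance bijection takes the form $h(x,y)=(y+x,y)$ (Lemma \ref{geometric prog}), giving $\beta(ny+x,y)=\beta(x,y)$ and hence $\beta(nx,x)=1$; this already settles the prime case, and for composite $m$ one further shows that a $0$-normalized cocycle is automatically $u$-normalized at \emph{every} unit $u\in\mathcal U(\mathbb Z_m)$ (Proposition \ref{Prop:beta and generators}) and that every element of $\mathbb Z_m$ is a quandle product of two units (Lemma \ref{Prop: x=u+v}, using that connectedness forces $m$ odd), whence $\beta(x,y)=\beta(u\cdot v,u\cdot(u\backslash y))=\beta(v,u\backslash y)=1$. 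None of this bookkeeping, nor any substitute for it, appears in your plan. As it stands the proposal is a credible research outline aligned with the paper's strategy, but it does not constitute a proof, and its doubly transitive half would need to be rebuilt on different foundations.
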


We offer two proofs of Theorem \ref{Th:Main}. The first proof is combinatorial in nature and mostly self-contained. The second proof (whose main idea was suggested to us by the referee of an earlier version of this paper) is much shorter and relies on an explicit description of the fundamental group of affine quandles from an unpublished note of Clauwens \cite{Clauwens}.

We also investigate constant cohomology with coefficients in arbitrary groups and we prove:

\begin{theorem}\label{Th:Main2}
Let $X$ be a latin quandle. Then the following conditions are equivalent:
\begin{itemize}
\item[(i)] $X$ is simply connected,
\item[(ii)] $H^2_c(X,G)=1$ for every group $G$.
\end{itemize}
\end{theorem}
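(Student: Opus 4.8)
The implication (ii)$\Rightarrow$(i) is immediate: specializing the hypothesis of (ii) to the symmetric groups $G=\sym{S}$ yields $H^2_c(X,\sym{S})=1$ for every set $S$, which by Proposition \ref{Pr:SC} is one of the equivalent formulations of simple connectedness. The entire content therefore lies in the converse (i)$\Rightarrow$(ii), and my plan is to reduce cohomology with coefficients in an arbitrary group to cohomology with symmetric group coefficients by means of the left regular representation.

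Fix a group $G$ and let $\lambda\colon G\hookrightarrow\sym{G}$ be the Cayley embedding $g\mapsto(h\mapsto gh)$. Since $\lambda$ is a group homomorphism, composition with $\lambda$ carries constant $G$-cocycles to constant $\sym{G}$-cocycles and $G$-coboundaries to $\sym{G}$-coboundaries, hence induces a map of pointed sets $\lambda_*\colon H^2_c(X,G)\to H^2_c(X,\sym{G})$. Assuming (i) we have $H^2_c(X,\sym{G})=1$, so for any constant $G$-cocycle $\beta\colon X\times X\to G$ the image $\lambda\circ\beta$ is a coboundary: there is a map $\gamma\colon X\to\sym{G}$ with $\lambda(\beta_{x,y})=\gamma_{x\triangleright y}\,\gamma_y^{-1}$ for all $x,y\in X$. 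The goal is to show that $\beta$ is already a $G$-coboundary, i.e. that $\gamma$ can be taken with values in $\lambda(G)$.

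This is where the latin hypothesis enters. Fix a base point $e\in X$. Because $\lambda(\beta_{x,y})\in\lambda(G)$, the product $\gamma_{x\triangleright y}\gamma_y^{-1}$ lies in the subgroup $\lambda(G)$ for all $x,y$; taking $y=e$ gives $\gamma_{x\triangleright e}\,\gamma_e^{-1}\in\lambda(G)$ for every $x$. Since $X$ is latin, the right translation $x\mapsto x\triangleright e$ is a bijection, so as $x$ ranges over $X$ the element $x\triangleright e$ ranges over all of $X$; consequently $\gamma_w\in\lambda(G)\,\gamma_e$ for every $w\in X$. Replacing $\gamma_x$ by $\gamma_x\gamma_e^{-1}$, which leaves the product $\gamma_{x\triangleright y}\gamma_y^{-1}$ unchanged, we may assume $\gamma_x\in\lambda(G)$ for all $x$. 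Writing $\gamma_x=\lambda(\delta_x)$ with $\delta_x\in G$ and using injectivity of $\lambda$, the coboundary relation collapses to $\beta_{x,y}=\delta_{x\triangleright y}\,\delta_y^{-1}$, exhibiting $\beta$ as a $G$-coboundary; hence $H^2_c(X,G)=1$.

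The step I expect to be the crux is precisely this promotion of the set-level trivialization $\gamma$ (valued in $\sym{G}$) to a $G$-equivariant one (valued in $\lambda(G)$): a priori a covering that is trivial as a bare covering need not be trivial as a principal $G$-covering. Conceptually the obstruction is controlled by the fact that $\lambda(G)=C_{\sym{G}}(\rho(G))$ is the centralizer of the right regular representation $\rho\colon g\mapsto(h\mapsto hg)$, so $\gamma_x\in\lambda(G)$ is exactly the statement that $\gamma_x$ commutes with all right translations, i.e. that the trivialization is $G$-equivariant. What makes the normalization into this centralizer possible is the surjectivity of a single right translation, which is furnished by the latin property and is the one feature that mere connectedness need not provide; I would verify carefully that it is indeed only this surjectivity (and not transitivity of $\lmlt{X}$) that the argument consumes.
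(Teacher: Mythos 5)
Your proof is correct, and its skeleton---handling (ii)$\Rightarrow$(i) by specializing to symmetric groups, and (i)$\Rightarrow$(ii) by pushing a $G$-valued cocycle into $\sym{G}$ through the left regular representation---is the same as the paper's (your $\lambda_*$ is exactly the paper's map $j$ from Proposition \ref{Pr:Embedding}). Where you genuinely diverge is in how the descent from $\sym{G}$-coefficients back to $G$-coefficients is executed. The paper never manipulates an arbitrary trivializing cochain: it routes through the normalization machinery of Section \ref{Sc:NormalizedCocycles}. Given $\beta\in Z^2_c(X,G)$, the normalized cocycle $\beta_u$ of Proposition \ref{Prop:cocycle normalized} is built from values of $\beta$ itself, hence is automatically $G$-valued; Proposition \ref{j commutes with normalization} gives $j(\beta_u)=j(\beta)_u$, Corollary \ref{trivial cohomology and normalized} applied in $\sym{G}$ forces $j(\beta)_u=1$, and injectivity of $\lambda_G$ yields $\beta_u=1$, whence $H^2_c(X,G)=1$ by the same corollary read over $G$. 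You instead take an arbitrary $\sym{G}$-valued trivializing map $\gamma$ and promote it into $\lambda(G)$ by right-multiplying by $\gamma_e^{-1}$, which consumes exactly the surjectivity of $R_e$. The two mechanisms are really the same computation in different clothing: your promoted cochain is $\gamma'_w=\gamma_w\gamma_e^{-1}=\lambda(\beta(w/e,e))$, which is $\lambda$ applied to (the conventional inverse of) the paper's normalizing map $\gamma(x)=\beta(x/u,u)^{-1}$. What your route buys is self-containment---no appeal to the normalization apparatus, only to triviality of $H^2_c(X,\sym{G})$ and right division---at the cost of re-deriving a special case of Proposition \ref{Prop:cocycle normalized}; what the paper's route buys is economy, since the normalization results are needed anyway for the proof of Theorem \ref{Th:Main}. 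Your closing observation is also accurate: the argument uses only invertibility of right translations, not transitivity of $\lmlt{X}$, which is precisely why the paper leaves the analogous statement for merely connected quandles as an open question in the Remark closing Section \ref{Sc:ArbitraryGroup}.
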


We can then easily obtain the following knot-theoretical corollary:

\begin{corollary}\label{Cr:Main}
Let $X$ be a simply connected latin quandle. Then every conjugacy quandle cocycle invariant based on $X$ is trivial.
\end{corollary}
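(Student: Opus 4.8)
The plan is to prove Corollary \ref{Cr:Main} by reducing the statement about conjugacy quandle cocycle invariants to the triviality of the appropriate constant cohomology, and then invoking Theorem \ref{Th:Main2}. A \emph{conjugacy quandle cocycle invariant} based on $X$ arises from a constant quandle cocycle valued in a conjugation-type structure: one takes a group $G$ acting on itself by conjugation, a constant cocycle $\beta\colon X\times X\to G$ (so that $\beta$ depends on two quandle parameters and takes values in $G$ viewed with the conjugation quandle structure $\conj{G}$), and forms the associated state-sum invariant by summing over colorings of a knot diagram the products of Boltzmann weights $\beta(x,y)^{\pm 1}$ read off at the crossings. The key observation is that the invariant is \emph{trivial} precisely when such a cocycle can be gauge-transformed to a cohomologically trivial one, i.e.\ when $[\beta]$ is the distinguished class in $H^2_c(X,G)$.

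First I would make the connection between conjugation quandle cocycles and the constant cohomology set $H^2_c(X,G)$ explicit. The conjugacy cocycle invariant is built from the conjugation quandle $\conj{G}$, and a constant cocycle with coefficients in $\conj{G}$ is the same data as a constant $2$-cocycle in the sense used to define $H^2_c(X,G)$; two cohomologous cocycles yield the same state-sum invariant, and the trivial cohomology class yields the trivial invariant (the one that merely counts colorings, using constant Boltzmann weight $1$). Thus if $H^2_c(X,G)=1$ for the relevant group $G$, every conjugacy cocycle is cohomologous to the trivial one, and hence every conjugacy cocycle invariant based on $X$ collapses to the trivial invariant.

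Next I would assemble the argument. By hypothesis $X$ is a simply connected latin quandle, so by Theorem \ref{Th:Main2} we have $H^2_c(X,G)=1$ for \emph{every} group $G$. In particular this holds for whatever group $G$ underlies the given conjugacy cocycle invariant. Combining this with the previous paragraph's identification, every constant conjugation cocycle $\beta$ lies in the trivial class of $H^2_c(X,G)$, so the associated invariant coincides with the invariant of the trivial cocycle and is therefore trivial. Since the invariant was arbitrary, every conjugacy quandle cocycle invariant based on $X$ is trivial, which is the claim.

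The main obstacle I anticipate is the bookkeeping in the first step: one must verify carefully that the \emph{conjugacy} cocycle invariant, as defined in the knot-theoretic literature (for instance in \cite{CEGS}), really factors through $H^2_c(X,G)$ as we have set it up, and in particular that cohomologous constant cocycles give \emph{equal} state sums rather than merely related ones. This amounts to checking that a gauge transformation (a coboundary) alters the Boltzmann weights at the crossings in a way that telescopes around any closed diagram and leaves the total product unchanged after summing over colorings; the non-abelian, constant-cocycle setting requires attention to the order of multiplication, but the computation is the standard invariance-under-coboundary argument and should present no genuine difficulty once the definitions are aligned.
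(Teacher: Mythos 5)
Your proposal is correct and follows essentially the same route as the paper: reduce to showing $H^2_c(X,G)=1$ for every group $G$ via Theorem \ref{Th:Main2}, and use the fact that a cocycle cohomologous to the trivial one yields a trivial invariant. The only difference is that the paper dispatches your anticipated ``bookkeeping obstacle'' by citing \cite[Proposition 5.6]{CEGS} rather than reproving the invariance under coboundaries.
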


\medskip

The paper is organized as follows. Basic results about quandles and their extensions, coverings, cohomology and constant cohomology are recalled in Section \ref{Sc:QuandleExtensions}. Let $X$ be a latin quandle, $u\in X$, and let $S$ be a nonempty set. Every constant quandle cocycle $\beta:X\times X\to \sym{S}$ with coefficients in the symmetric group $\sym{S}$ is cohomologous to a normalized (constant) quandle cocycle $\beta_u$ satisfying $\beta_u(x,u)=1$ for every $x\in X$, as is recalled in Section \ref{Sc:NormalizedCocycles}. In Section \ref{Sc:ThreeBijections} we introduce three bijections $f$, $g$, $h$ of $X\times X$ under which every $u$-normalized cocycle $\beta_u$ is invariant, that is, $\beta_u(k(x,y)) =\beta_u(x,y)$ for $k\in\{f,g,h\}$. To prove that a given connected quandle is simply connected, it then suffices to show that for every $(x,y)\in X\times X$ there exists some $(x_0,y_0)\in X\times X$ and $k\in\langle f,g,h\rangle$ such that $\beta_u(x_0,y_0)=1$ and $k(x_0,y_0)=(x,y)$. We therefore study the orbits of $f$, $g$, $h$ in Section \ref{Sc:Orbits}, and again in Section \ref{Sc:OrbitsOnAffine} in the restricted case of connected affine quandles. Theorem \ref{Th:Main} is proved in Section \ref{Sc:Classes}. Clauwens' explicit description of the fundamental group of affine quandles is recalled in Section \ref{Sc:Clauwens} and then Theorem \ref{Th:Main} is proved once more. Finally, constant cohomology with coefficients in arbitrary groups is introduced in Section \ref{Sc:ArbitraryGroup}, where we also prove Theorem \ref{Th:Main2} and Corollary \ref{Cr:Main}.

\section{Basic results and quandle extensions}\label{Sc:QuandleExtensions}

\subsection{Quandles}

For a groupoid $(X,\cdot)$ and $x\in X$, let
\begin{align*}
    &L_x:X\to X,\quad x\mapsto x\cdot y,\\
    &R_x:X\to X,\quad x\mapsto y\cdot x
\end{align*}
be the \emph{left translation} by $x$ and the \emph{right translation} by $x$, respectively.

We will often suppress the binary operation while talking about groupoids and denote them by $X$ rather than by $(X,\cdot)$. We denote by $\aut{X}$ the automorphism group of $X$. When $X$ is merely a set, then $\aut{X}=\sym{X}$, the symmetric group on $X$.

\begin{definition}
A groupoid $(X,\triangleright)$ is a \emph{quandle} if it is
a left quasigroup that is left distributive and idempotent. That is, $(X,\triangleright)$ is a quandle if it satisfies the following axioms:
\begin{gather}
	L_x\in\sym{X},\label{LQG}\\
    x\triangleright(y\triangleright z)  =  (x\triangleright y)\triangleright (x\triangleright z),\label{LD}\\
    x\triangleright x  =  x\label{ID}
\end{gather}
for every $x$, $y$, $z\in X$.
\end{definition}

Note that the identity \eqref{LQG} is equivalent to $\X$ having a left division operation defined by
\begin{displaymath}
   x\ldiv y = L_x^{-1}(y),
\end{displaymath}
and that the two identities \eqref{LQG} and \eqref{LD} jointly state that $L_x\in\aut{X}$ for every $x\in X$.

Every quandle is \emph{flexible}, indeed, $x \triangleright(y\triangleright x) = (x\triangleright y)\triangleright (x\triangleright x) = (x\triangleright y)\triangleright x$, and it is therefore safe to write $x\triangleright y\triangleright x$.

For any groupoid $\X$ and $\varphi\in\aut{\X}$ we have $\varphi L_x\varphi^{-1} = L_{\varphi(x)}$ for every $x\in X$. In particular, if $\X$ is a quandle then
\begin{equation}\label{CONJ}
    L_y L_x L_y^{-1}=L_{y\triangleright x}\text{ for every $x$, $y\in X$}.
\end{equation}

\begin{example}
Quandles appear naturally as the following examples illustrate.
\begin{enumerate}
\item[(i)] The one element groupoid is called the \emph{trivial quandle}. More generally, any projection groupoid on a set $X$ (that is, a groupoid satisfying $x\triangleright y=y$ for every $x$, $y\in X$) is a quandle, the \emph{projection quandle} $\P_X$ over $X$.
\item[(ii)] Let $G$ be a group and $H$ a union of (some) conjugacy classes of $G$. For $x$, $y\in H$, let $x\triangleright y = xyx^{-1}$. Then $(H,\triangleright)$ is a quandle, the \emph{conjugation quandle} on $H$.
\item[(iii)] Let $G$ be a group, $\alpha\in\aut{G}$ and $H\le \mathrm{Fix}(\alpha)=\setof{x\in G}{\alpha(x)=x}$. Let $G/H$ be the set of left cosets $\setof{xH}{x\in G}$. Then $G/H$ with multiplication
    \begin{displaymath}
        xH\triangleright yH=x\alpha(x^{-1}y)H
    \end{displaymath}
    is a quandle $\gal{G,H,\alpha}=(G/H,\triangleright)$, called the \emph{coset quandle} (also known as \emph{homogeneous quandle} or \emph{Galkin quandle}).
\item[(iv)] A coset quandle $\gal{G,H,\alpha}$ with $H=1$ is called \emph{principal} and will be denoted by $\gal{G,\alpha}$. If, in addition, $G$ is an abelian group, then $\gal{G,\alpha}$ is an \emph{affine quandle}.
\end{enumerate}
\end{example}

\begin{definition}
For a quandle $X$, we call the set
\begin{displaymath}
    L_\X=\setof{L_x}{x\in X}
\end{displaymath}
the \emph{left section} of $\X$, and the group
\begin{displaymath}
    \lmlt{\X}=\langle L_X\rangle\le\aut{X}
\end{displaymath}
the \emph{left multiplication group} of $\X$. The group $\lmlt{X}$ is often denoted by $\mathrm{Inn}(X)$ and called the \emph{inner automorphism group} of $X$.
\end{definition}

The left section $L_X$ is closed under conjugation by \eqref{CONJ}, and the corresponding conjugation quandle on $L_X$ will be denoted by $L(X)$. Note that the mapping $X\to L(X)$, $x\mapsto L_x$ is a homomorphism of quandles.

\begin{definition}
A quandle $X$ is \emph{latin} if $R_x \in \sym{X}$ for every $x\in X$.
\end{definition}

In a latin quandle we can define right division by $x/y = R_y^{-1}(x)$. A latin quandle $X$ is therefore a quasigroup $(X,\triangleright,\ldiv,/)$ in which the multiplication $\triangleright$ is left distributive and idempotent. As in any quasigroup, a homomorphism of a latin quandle $(X,\triangleright)$ is automatically a homomorphism of $(X,\triangleright,\ldiv,/)$. For instance, $x\triangleright (y/z) = L_x(y/z) = L_x(y)/L_x(z) = (x/y)\triangleright (x/z)$ holds in any latin quandle.

\begin{definition}
A quandle $\X$ is \emph{connected} (or, sometimes, \emph{transitive}) if $\lmlt{\X}$ acts transitively on $X$, and \emph{doubly transitive} if $\lmlt{X}$ acts doubly transitively on $X$.
\end{definition}

All latin quandles are connected. Indeed, if $X$ is a latin quandle and $x$, $y\in X$, then $L_{x/y}(y)=x$. All finite quandles can be built from connected quandles but the extension theory is not well understood, cf. \cite[Proposition 1.17]{AG} or \cite{polish}.

In order to simplify notation, we will from now on denote the quandle multiplication by $\cdot$ or by juxtaposition rather than by $\triangleright$.

\subsection{Quandle extensions}

The notion of quandle extensions was introduced in \cite{AG}. Let $X$ be a groupoid, $S$ a nonempty set, and suppose that $(X\times S,\cdot)$ is a groupoid. Then the canonical projection $\pi:X\times S\to X$, $(x,s)\mapsto x$ is a homomorphism of groupoids if and only if there is a mapping $\beta:X\times X\times S\times S\to S$ such that
\begin{displaymath}
    (x,s)\cdot (y,t) = (xy,\beta(x,y,s,t)).
\end{displaymath}
We will then denote $(X\times S,\cdot)$ by $X\times_\beta S$.

Suppose now that $X$ is a quandle. It is then easy to see that $X\times_\beta S$ is also a quandle if and only if $\beta(x,y,s):S\to S$, $t\mapsto \beta(x,y,s,t)$ is a bijection for every $x$, $y\in X$, $s\in S$, and the quandle cocycle conditions
\begin{eqnarray}
    \beta ( xy,xz,\beta(x,y,s)(t)) \beta ( x,z,s) &= &\beta (x,yz,s) \beta ( y,z,t),\label{cocycle condition}\\
    \beta ( x,x,s)(s) & = & s\label{quandle condition}
\end{eqnarray}
hold for every $x$, $y$, $z\in X$ and every $s$, $t\in S$. In the context of quandles, we will therefore consider $\beta$ as a mapping $X\times X\times S\to\sym{S}$.

\begin{definition}[{{\cite[Definition 2.2]{AG}}}]\label{Df:QC} Let $\X$ be a quandle and $S$ a nonempty set. A mapping $\beta:X\times X\times S\to\sym{S}$ is a \emph{quandle cocycle} if \eqref{cocycle condition} and \eqref{quandle condition} hold. The set of all quandle cocycles $X\times X\times S\to\sym{S}$ will be denoted by $Z^2(\X,\sym{S})$.
\end{definition}

\begin{proposition}[\cite{AG}]\label{Pr:Extension}
The following conditions are equivalent for quandles $\X$, $\Y$:
\begin{enumerate}
\item[(i)] $Y$ is a quandle defined on $X\times S$ for some set $S$ and the canonical projection $Y\to X$ is a quandle homomorphism.
\item[(ii)] $Y\cong X\times_\beta S$ for some set $S$ and some quandle cocycle $\beta\in Z^2(\X,\sym{S})$.
\item[(iii)] $X\cong Y/\alpha$ for a uniform congruence $\alpha$ of $\Y$, that is, a congruence $\alpha$ of $\Y$ such that all blocks of $\alpha$ have the same cardinality.
\end{enumerate}
\end{proposition}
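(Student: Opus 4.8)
The plan is to prove the cycle of implications, treating the equivalence of (i) and (ii) as essentially a restatement of the groupoid--cocycle correspondence recalled just before Definition \ref{Df:QC}, and then to connect (i) with (iii) by passing between a surjective projection and its kernel congruence.

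For (i) $\Leftrightarrow$ (ii): if $\Y$ is a quandle on $X\times S$ whose canonical projection $\pi$ is a homomorphism, then by the displayed discussion preceding Definition \ref{Df:QC} the multiplication necessarily has the form $(x,s)\cdot(y,t)=(xy,\beta(x,y,s,t))$ for some $\beta$, and the quandle axioms on $X\times S$ are equivalent to each $\beta(x,y,s)$ being a bijection of $S$ together with the cocycle conditions \eqref{cocycle condition} and \eqref{quandle condition}; that is, $\beta\in Z^2(\X,\sym{S})$ and $\Y=X\times_\beta S$, which gives (i) $\Rightarrow$ (ii). Conversely $X\times_\beta S$ is by construction a quandle on $X\times S$ whose canonical projection is a homomorphism, so it witnesses (i) up to the isomorphism $\Y\cong X\times_\beta S$. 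It therefore suffices to link (i) with (iii).

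For (i) $\Rightarrow$ (iii), take $\Y$ on $X\times S$ with $\pi:\Y\to\X$ a surjective homomorphism (surjectivity holds since $S\neq\emptyset$), and let $\alpha=\ker{\pi}$ be the induced congruence. Its blocks are exactly the fibers $\pi^{-1}(x)=\{x\}\times S$, all of cardinality $|S|$, so $\alpha$ is uniform, and the homomorphism theorem gives $\Y/\alpha\cong\mathrm{im}(\pi)=\X$. For (iii) $\Rightarrow$ (i), which I expect to be the main step, suppose $\alpha$ is a uniform congruence of $\Y$ and fix an isomorphism $\iota:\Y/\alpha\to\X$, so that $\pi=\iota\circ q:\Y\to\X$ (with $q$ the quotient map) is a surjective homomorphism whose fibers are the $\alpha$-blocks. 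Uniformity says all these fibers share a common cardinality; let $S$ be a set of that cardinality and choose, for each $x\in X$, a bijection $\phi_x:\pi^{-1}(x)\to S$ (invoking choice when the index set is infinite). Assembling these into $\Phi:\Y\to X\times S$, $y\mapsto(\pi(y),\phi_{\pi(y)}(y))$, produces a bijection; transporting the operation of $\Y$ along $\Phi$ makes $X\times S$ a quandle, and because $\pi$ is a homomorphism the first coordinate of any product $\Phi(y)\cdot\Phi(y')$ equals $\pi(y)\cdot\pi(y')$, so the canonical projection $X\times S\to\X$ agrees with $\pi$ under $\Phi$ and is a homomorphism. This is precisely (i).

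The only genuine subtlety is the trivialization in (iii) $\Rightarrow$ (i): uniformity is exactly the hypothesis that permits choosing the fiberwise bijections $\phi_x$ with one common target $S$, and the resulting projection is automatically a homomorphism since $\pi$ already is one. Everything else is routine bookkeeping, and the cocycle $\beta\in Z^2(\X,\sym{S})$ realizing $\Y$ is then recovered through (i) $\Leftrightarrow$ (ii).
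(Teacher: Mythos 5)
Your proposal is correct and takes essentially the same approach as the paper: one direction comes from observing that the kernel congruence of the canonical projection is uniform, and the converse comes from using uniformity to choose fiberwise bijections onto a common set $S$. The only difference is organizational: where the paper proves (iii)$\Rightarrow$(ii) directly by writing down the explicit cocycle $\beta([x],[y],s)=h_{[xy]}L_{h_{[x]}^{-1}(s)}h_{[y]}^{-1}$ and exhibiting the isomorphism $y\mapsto([y],h_{[y]}(y))$, you transport the structure of $Y$ along that same bijection to obtain (iii)$\Rightarrow$(i) and recover the cocycle implicitly through (i)$\Leftrightarrow$(ii).
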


\begin{proof}
We have already shown above that (i) and (ii) are equivalent. Suppose that (ii) holds and $Y=X \times_\beta S$ for some $\beta\in Z^2(X,\sym{S})$. Then $X\cong Y/\ker{\pi}$, where $\pi: X\times_\beta S\to X$ is the canonical projection. Clearly, $\ker{\pi}$ is a uniform congruence, each block having cardinality $|S|$. Conversely, let $\alpha$ be a uniform congruence on $\Y$, $X=Y/\alpha$, and let $S$ be a set of the same cardinality as any of the blocks of $\alpha$. Let $\setof{h_{[x]}:[x]\to S}{[x]\in\X}$ be a family of bijections indexed by the blocks of $\alpha$. Then the mapping $\beta:X\times X\times S\to\sym{S}$ defined by
\begin{displaymath}
    \beta ([x],[y],s) = h_{[xy]} L_{h_{[x]}^{-1}(s)}h_{[y]}^{-1}
\end{displaymath}
is a quandle cocycle and the mapping
\begin{displaymath}
    Y\to X\times_\beta S,\quad x\mapsto ([x] ,h_{[x]}(x))
\end{displaymath}
is an isomorphism of quandles.
\end{proof}

We therefore define:

\begin{definition}
Let $\X$, $\Y$ be quandles. Then $\Y$ is an \emph{extension} of $\X$ if $\X\cong \Y/\alpha$ for some uniform congruence $\alpha$ of $\Y$.
\end{definition}

For a quandle $\X$, let $\mathcal H(\X)$ denote the class of all homomorphic images of $\X$. We have:

\begin{proposition}
Let $\Y$ be a connected quandle. Then the following conditions are equivalent for a quandle $\X$:
\begin{enumerate}
\item[(i)] $\X\in\mathcal H(\Y)$,
\item[(ii)] $\Y$ is an extension of $\X$.
\end{enumerate}
\end{proposition}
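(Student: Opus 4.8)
The plan is to reduce the whole equivalence to a single structural fact: on a connected quandle every congruence is automatically uniform. Granting this, both implications become one-liners.

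The implication (ii) $\Rightarrow$ (i) needs no connectedness at all. A uniform congruence is in particular a congruence, so $\X\cong\Y/\alpha$ already exhibits $\X$ as a homomorphic image of $\Y$, giving $\X\in\mathcal H(\Y)$. For the converse (i) $\Rightarrow$ (ii), I would start from a congruence $\alpha$ of $\Y$ with $\X\cong\Y/\alpha$ and aim to show that all blocks of $\alpha$ have the same cardinality, which is exactly what it takes to make $\alpha$ uniform.

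The first key step is to observe that every left translation $L_x$ preserves $\alpha$: if $(a,b)\in\alpha$, then compatibility of the congruence with left multiplication yields $(L_x(a),L_x(b))\in\alpha$. Since each $L_x$ is a bijection by \eqref{LQG}, it carries each $\alpha$-block \emph{onto} (not merely into) an $\alpha$-block, and the same therefore holds for every element of $\lmlt{\Y}=\langle L_\Y\rangle$. Consequently any two blocks lying in a common $\lmlt{\Y}$-orbit have equal cardinality.

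The second step is to feed in connectedness. Because $\lmlt{\Y}$ acts transitively on $\Y$, for any two blocks $B_1$, $B_2$ I can choose $y_i\in B_i$ and $g\in\lmlt{\Y}$ with $g(y_1)=y_2$, which forces $g(B_1)=B_2$; hence $\lmlt{\Y}$ permutes the blocks of $\alpha$ transitively and they all share one cardinality. This makes $\alpha$ uniform, so $\Y$ is an extension of $\X$ and we are done. I expect no serious obstacle here: the only point requiring care is the ``onto'' in the first step, where one must use that a bijection of $\Y$ which preserves $\alpha$ cannot map a block into a proper part of another block, so that it genuinely restricts to a bijection between blocks.
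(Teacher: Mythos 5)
Your route is the same as the paper's: reduce both implications to the single fact that on a connected quandle every congruence is uniform, and prove that fact by letting $\lmlt{\Y}$ act transitively on the blocks of $\alpha$. The paper compresses exactly this into ``it is easy to show that every congruence of $\Y$ is uniform,'' so your write-up is an expansion of the intended argument, and your implication (ii)$\Rightarrow$(i) is indeed immediate.

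There is, however, a genuine flaw at the one point you yourself flag as delicate. The principle you invoke --- that a bijection of $\Y$ preserving $\alpha$ cannot map a block properly into a block --- is false in general. Take $Y=\mathbb Z$, let the blocks of $\alpha$ be the set of all negative integers together with the singletons $\{n\}$ for $n\ge 0$, and let $f(n)=n-1$: then $f$ is a bijection preserving $\alpha$, yet it maps the infinite block properly into itself. So ``$L_x$ bijective and $\alpha$-preserving'' does not by itself yield ``$L_x$ maps blocks onto blocks,'' and since the proposition carries no finiteness hypothesis your argument does not close as written. The repair needs one more observation: $\alpha$ is the kernel of a quandle homomorphism $\varphi:\Y\to\X$, and because $\X$ is a left quasigroup, $\varphi$ automatically preserves left division (from $\varphi(a)\cdot\varphi(a\ldiv b)=\varphi(a\cdot(a\ldiv b))=\varphi(b)$ and the bijectivity of $L_{\varphi(a)}$ in $\X$ one gets $\varphi(a\ldiv b)=\varphi(a)\ldiv\varphi(b)$). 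Hence $\alpha$ is also compatible with $\ldiv$, i.e.\ each $L_x^{-1}$ preserves $\alpha$ as well. Now if $L_x(B)\subseteq B'$, then $L_x^{-1}(B')$ lies in a single block $B''$, and $B\subseteq L_x^{-1}(B')\subseteq B''$ forces $B''=B$, so $B'\subseteq L_x(B)$ and $L_x(B)=B'$. (If $\Y$ is finite one can instead note that $L_x^{-1}$ is a positive power of $L_x$.) With this patch the rest of your proof --- transitivity of $\lmlt{\Y}$ on blocks, hence equal cardinalities --- goes through and matches the paper.
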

\begin{proof}
By the Fundamental Homomorphism Theorem, $\X\in\mathcal H(\Y)$ if and only if $\X\cong \Y/\alpha$ for some congruence $\alpha$ of $\Y$. Since $\Y$ is connected, it is easy to show that every congruence of $\Y$ is uniform.
\end{proof}

The following equivalence relation partitions $Z^2(\X,\sym{S})$ so that any two cocycles in the same block give rise to isomorphic quandles. In a suitably defined category of quandle extensions (see \cite{AG} or Proposition \ref{Pr:Categorical}), two cocycles belong to the same block of this partition if and only if the two quandle extensions are isomorphic.

\begin{definition}
Let $\X$ be a quandle and $S$ a nonempty set. We say that $\beta$, $\beta'\in Z^2(\X,\sym{S})$ are \emph{cohomologous}, and we write $\beta\sim\beta'$, if there exists a mapping
\begin{equation*}
    \gamma :X\to \sym{S}
\end{equation*}
such that
\begin{equation*}
    \beta'(x,y,s) =\gamma(xy) \beta(x,y,\gamma(x)^{-1}(s)) \gamma(y)^{-1}
\end{equation*}
holds for every $x$, $y\in X$ and $s\in S$. The factor set
\begin{equation*}
    H^2(\X,\sym{S}) =Z^2(\X,\sym{S})/\sim
\end{equation*}
is the \emph{second (non-abelian) cohomology set} of $\X$ with coefficients in $\sym{S}$.
\end{definition}

The following result makes clear the relationship between cohomologous cocycles and isomorphisms of quandle extensions.

\begin{proposition}[{{\cite[pp. 194--195]{AG}}}]\label{Pr:Categorical}
The following conditions are equivalent for a quandle $X$, a set $S$ and cocycles $\beta$, $\beta'\in Z^2(X,\sym{S})$:
\begin{enumerate}
\item[(i)] $\beta\sim \beta^{\prime}$,
\item[(ii)] there exists an isomorphism $\phi: \X\times_{\beta} S\longrightarrow\X\times_{\beta^{\prime}} S$ such that the following diagram is commutative
\begin{displaymath}
    \xymatrixcolsep{63pt}\xymatrixrowsep{30pt}\xymatrix{
    \X\times_\beta S\ar[r]^{\pi}
    \ar[d]^{\phi} & \X \\
    \X\times_{\beta^{\prime}} S \ar[ur]^{\pi}&}.
\end{displaymath}
\end{enumerate}
\end{proposition}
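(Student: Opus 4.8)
The plan is to show that an isomorphism $\phi$ sitting over the identity on $\X$ is merely a repackaging of a cohomology map $\gamma$, and conversely. The key observation is that commutativity of the diagram, $\pi\phi=\pi$, forces $\phi$ to preserve the first coordinate, so that $\phi(x,s)=(x,\gamma(x)(s))$ for a family of maps $\gamma(x):S\to S$ defined by letting $\gamma(x)(s)$ be the second coordinate of $\phi(x,s)$. Since $\phi$ is a bijection that maps each fiber $\{x\}\times S$ into itself, its restriction to every such fiber is a bijection of $\{x\}\times S$ onto itself, whence $\gamma(x)\in\sym{S}$. Conversely, any $\gamma:X\to\sym{S}$ produces a bijection $\phi(x,s)=(x,\gamma(x)(s))$ satisfying $\pi\phi=\pi$. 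It therefore remains only to match the requirement that $\phi$ be a quandle homomorphism against the requirement that $\gamma$ witness $\beta\sim\beta'$.

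For this I would expand both sides of the homomorphism identity using the two extension products. On one side,
\[
    \phi\bigl((x,s)\cdot(y,t)\bigr)=\phi\bigl(xy,\beta(x,y,s)(t)\bigr)=\bigl(xy,\gamma(xy)\beta(x,y,s)(t)\bigr),
\]
while on the other,
\[
    \phi(x,s)\cdot\phi(y,t)=\bigl(x,\gamma(x)(s)\bigr)\cdot\bigl(y,\gamma(y)(t)\bigr)=\bigl(xy,\beta'(x,y,\gamma(x)(s))(\gamma(y)(t))\bigr).
\]
These already agree in the first coordinate, so $\phi$ is a homomorphism precisely when the second coordinates coincide for all $s,t\in S$; letting $t$ range over $S$, this is exactly the equality of permutations
\[
    \gamma(xy)\,\beta(x,y,s)=\beta'\bigl(x,y,\gamma(x)(s)\bigr)\,\gamma(y)
\]
for all $x,y\in X$ and $s\in S$. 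Substituting $\gamma(x)^{-1}(s)$ for $s$ and solving for $\beta'$ converts this into the defining relation $\beta'(x,y,s)=\gamma(xy)\beta(x,y,\gamma(x)^{-1}(s))\gamma(y)^{-1}$ of the cohomology $\beta\sim\beta'$. Reading the chain of equivalences in both directions then yields (i)$\Leftrightarrow$(ii).

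I do not expect a genuine obstacle, as the argument is a direct translation between the two languages. The only points needing care are bookkeeping ones. First, the homomorphism identity must be verified as an identity of \emph{permutations} of $S$, that is, for all $t$ and not for a single value; it is exactly this freedom in $t$ that lets one cancel the innermost argument and recover the full cocycle relation rather than a weaker pointwise statement. Second, one must handle the inner substitution $s\mapsto\gamma(x)^{\pm1}(s)$ correctly so that the asymmetric placement of $\gamma(x)^{-1}$ inside $\beta$ in the cohomology relation emerges as stated. Finally, no separate check of the quandle axioms for $\phi$ is required, since $\X\times_\beta S$ and $\X\times_{\beta'}S$ are already known to be quandles and $\phi$ is asked only to respect their products.
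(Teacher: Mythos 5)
Your proof is correct. Note that the paper itself gives no proof of this proposition---it is quoted from Andruskiewitsch--Gra\~{n}a \cite[pp.~194--195]{AG}---and your argument is exactly the standard one that establishes it: commutativity over $\X$ forces $\phi(x,s)=(x,\gamma(x)(s))$ with each $\gamma(x)\in\sym{S}$, and the homomorphism identity, read as an equality of permutations of $S$ by letting $t$ vary, is literally the cohomology relation after the substitution $s\mapsto\gamma(x)^{-1}(s)$. Your bookkeeping remarks (quantifying over all $t$, the placement of $\gamma(x)^{-1}$, and that no quandle axioms need rechecking) are precisely the points where care is needed, and you handled them correctly.
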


\subsection{Quandle coverings and constant cocycles}

We are interested in a special class of quandle extensions, so-called quandle coverings.

\begin{definition}[{{\cite[Definition 1.4]{E}}}]\label{Df:Covering}
A connected quandle $\Y$ is a \emph{covering} of a quandle $\X$ if there is a surjective quandle homomorphism $f:\Y\to \X$ such that the left translations $L_x$, $L_y$ of $\Y$ coincide whenever $f(x)=f(y)$.
\end{definition}

For a quandle $Y$, let $\ker{L_Y}$ denote the equivalence relation on $Y$ induced by equality in the left section $L_Y$, that is, $(x,y)\in\ker{L_Y}$ if and only if $L_x=L_y$. Then $\ker{L_Y}$ is in fact a congruence on $Y$, thanks to \eqref{CONJ}. Moreover, if $Y$ is a connected quandle then $\ker{L_Y}$ is a uniform congruence, and hence $Y$ is an extension of $Y/\ker{L_Y}$. Therefore, a connected quandle $\Y$ is a covering of a quandle $\X$ if and only if $\X\cong \Y/\alpha$, where $\alpha$ is some uniform congruence of $Y$ that refines $\ker{L_Y}$. Here, we say that a congruence $\alpha$ refines a congruence $\beta$ if $(x,y)\in\beta$ whenever $(x,y)\in\alpha$.

Here are some nontrivial examples of quandle coverings:

\begin{proposition}
Let $\X_1=\gal{G,H_1,\alpha}$ and $\X_2=\gal{G,H_2,\alpha}$ be two coset quandles such that $H_1\leq H_2$. Then $\X_1$ is a covering of $\X_2$.
\end{proposition}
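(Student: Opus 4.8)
The plan is to verify the three requirements of Definition~\ref{Df:Covering} directly for the coset-collapsing map. The candidate homomorphism is
\[
    f\colon G/H_1\to G/H_2,\qquad xH_1\mapsto xH_2 .
\]
First I would observe that $f$ is well defined and surjective: if $xH_1=x'H_1$ then $x^{-1}x'\in H_1\subseteq H_2$, whence $xH_2=x'H_2$, and surjectivity is immediate. That $f$ is a quandle homomorphism is then a one-line check using the defining product $xH\cdot yH=x\alpha(x^{-1}y)H$, since both $f(xH_1\cdot yH_1)$ and $f(xH_1)\cdot f(yH_1)$ equal $x\alpha(x^{-1}y)H_2$.

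The single substantive step is the translation condition, and this is exactly where the hypothesis $H_2\le\mathrm{Fix}(\alpha)$ is used. In $\X_1$ we have $L_{xH_1}(yH_1)=x\alpha(x^{-1}y)H_1$. Suppose $f(xH_1)=f(x'H_1)$, so that $x'=xh$ for some $h\in H_2$. Then
\[
    L_{x'H_1}(yH_1)=xh\,\alpha(h^{-1})\,\alpha(x^{-1}y)H_1,
\]
and since $h\in H_2\le\mathrm{Fix}(\alpha)$ we have $\alpha(h^{-1})=h^{-1}$, so the factor $h\alpha(h^{-1})$ collapses to $1$ and $L_{x'H_1}(yH_1)=x\alpha(x^{-1}y)H_1=L_{xH_1}(yH_1)$. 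Thus $L_{x'H_1}=L_{xH_1}$ whenever $f(xH_1)=f(x'H_1)$, which is precisely the covering condition. Equivalently, the kernel congruence $\ker{f}$---whose blocks $\setof{xhH_1}{h\in H_2}$ all have size $[H_2:H_1]$, so that $\ker{f}$ is uniform---refines $\ker{L_{\X_1}}$, and since $\X_1/\ker{f}\cong\X_2$ the claim also drops out of the characterization of coverings recorded just after Definition~\ref{Df:Covering}.

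I do not anticipate any genuine difficulty in the computations above; the only point needing care is the remaining clause of Definition~\ref{Df:Covering}, namely that $\X_1$ itself be connected. This is the hypothesis most likely to fail and should be made explicit: using the standard description of connectedness of coset quandles (for instance, $\gal{G,H,\alpha}$ is connected iff $G=\langle\setof{g^{-1}\alpha(g)}{g\in G}\rangle H$), one sees that $H_1\le H_2$ makes connectedness of $\X_1$ \emph{stronger} than that of $\X_2$, so it cannot merely be inherited from $\X_2$ and must be assumed (or separately verified) for $\X_1$.
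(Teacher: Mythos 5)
Your proof is correct and takes essentially the same route as the paper's: the same coset-collapsing map $xH_1\mapsto xH_2$, the same one-line homomorphism check, and the same use of $H_2\le\mathrm{Fix}(\alpha)$ to kill the factor $h\alpha(h^{-1})$ and conclude $L_{xH_1}=L_{x'H_1}$ (the paper phrases this as $x\alpha(x^{-1})=y\alpha(y^{-1})$ when $x=yh$, $h\in H_2$). Your closing remark about connectedness is not a defect of your argument but a legitimate catch: the paper's Definition \ref{Df:Covering} does require the covering quandle to be connected, yet the paper's proof---exactly like yours---verifies only surjectivity, the homomorphism property, and the coincidence of left translations, so connectedness of $X_1$ is left as an implicit hypothesis there as well. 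Your observation that connectedness of $\gal{G,H_1,\alpha}$ is the \emph{stronger} condition (via the criterion $G=\langle\setof{g^{-1}\alpha(g)}{g\in G}\rangle H$, the condition for $H_1$ implies the one for $H_2$ but not conversely) is correct, so flagging it explicitly makes your write-up, if anything, more precise than the paper's.
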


\begin{proof}
Define $\psi :X_1\to X_2$ by $\psi (xH_1)=xH_2$. The mapping $\psi$ is surjective and every block of $\ker{\psi}$ has the same cardinality as $H_2/H_1$. For $x$, $y\in G$ we have
\begin{displaymath}
    \psi  ( xH_1\cdot yH_1) = \psi   ( x\alpha ( x^{-1}y) H_1) = x\alpha ( x^{-1}y) H_2= xH_2\cdot yH_2= \psi( xH_1) \cdot \psi  (yH_1),
\end{displaymath}
so $\psi $ is a homomorphism.

Suppose that $\psi (xH_1)=\psi (yH_1)$, i.e., $x=yh$ for some $h\in H_2\leq \mathrm{Fix}(\alpha)$. Then $x\alpha(x^{-1})=yh\alpha(h^{-1}y^{-1})=y\alpha(y^{-1})$ and thus
\begin{displaymath}
    xH_1\cdot zH_1 =x\alpha(x^{-1}z)H_1=y\alpha(y^{-1}z) H_1=yH_1\cdot zH_1
\end{displaymath}
for every $z\in G$. This shows that $L_{xH_1} = L_{yH_1}$ in $X_1$, so $X_1$ is a covering of $X_2$.
\end{proof}

We proceed to identify those quandle cocycles that correspond to quandle coverings.

\begin{definition}[{{\cite[Definition 2.2]{AG}}}]\label{Df:CQC}
Let $\X$ be a quandle and $S$ a nonempty set. A quandle cocycle $\beta\in Z^2(X,\sym{S})$ is a \emph{constant quandle cocycle} if
\begin{displaymath}
    \beta(x,y,r)=\beta(x,y,s)
\end{displaymath}
for every $x$, $y\in X$ and $r$, $s\in S$. Since the value of $\beta(x,y,s)$ is then independent of $s\in S$, we will think of constant cocycles as mappings $\beta:X\times X\to \sym{S}$.

The set of all constant quandle cocycles $X\times X\to \sym{S}$ will be denoted by $Z^2_c(X,\sym{S})$. The equivalence relation $\sim$ on $Z^2(X,\sym{S})$ induces an equivalence relation on $Z^2_c(X,\sym{S})$, and we define
\begin{displaymath}
    H^2_c(\X,\sym{S}) =Z^2_c(\X,\sym{S})/\sim,
\end{displaymath}
the \emph{second constant cohomology set} of $X$ with coefficients in $\sym{S}$.
\end{definition}

We see immediately from \eqref{cocycle condition} and \eqref{quandle condition} that a mapping $\beta:X\times X\to\sym{S}$ is a constant quandle cocycle if and only if it satisfies
\begin{align}
    \beta ( xy,xz)\beta ( x,z)&=\beta (x,yz) \beta ( y,z)\tag{CC}\label{CCC},\\
    \beta ( x,x)&=1\tag{CQ}\label{CQC}
\end{align}
for every $x$, $y$, $z\in X$.

Note that \eqref{CCC} implies
\begin{equation}\label{WCC}
    \beta (xy,xz) =\beta(x,yz)\ \Leftrightarrow\ \beta(x,z) = \beta(y,z)\tag{WCC}
\end{equation}
for every $x$, $y$, $z\in X$. We will call \eqref{WCC} the \emph{weaker cocycle condition}.

Just as quandle cocycles parametrize quandle extensions, the constant cocycles parametrize quandle coverings.

\begin{proposition}[{{\cite[Lemma 5.1]{CEGS}}}]\label{coverings and constant cocycles}
Let $X$, $Y$ be connected quandles. Then the following conditions are equivalent:
\begin{enumerate}
\item[(i)] $Y$ is a covering of $X$,
\item[(ii)] $Y\cong X\times_\beta S$ for some set $S$ and $\beta\in Z^2_c(X,\sym{S})$.
\end{enumerate}
\end{proposition}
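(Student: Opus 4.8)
The statement to prove is Proposition \ref{coverings and constant cocycles}, which asserts the equivalence of two conditions for connected quandles $X$, $Y$:
\begin{enumerate}
\item[(i)] $Y$ is a covering of $X$,
\item[(ii)] $Y\cong X\times_\beta S$ for some set $S$ and $\beta\in Z^2_c(X,\sym{S})$.
\end{enumerate}

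The plan is to leverage the general extension correspondence (Proposition \ref{Pr:Extension}) together with the characterization of coverings in terms of congruences refining $\ker{L_Y}$ that was established in the discussion following Definition \ref{Df:Covering}. The essential point is that among all extensions of $X$, the coverings are precisely those for which the left translations in the fibers agree, and I want to translate this fiber-condition into the \emph{constancy} condition on the cocycle $\beta$.

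First I would prove (ii) $\Rightarrow$ (i). Suppose $Y\cong X\times_\beta S$ with $\beta\in Z^2_c(X,\sym{S})$. By Proposition \ref{Pr:Extension}, the canonical projection $\pi:X\times_\beta S\to X$ is a surjective quandle homomorphism, so it remains only to verify that $L_{(x,s)}=L_{(x,t)}$ whenever the first coordinates agree. Computing directly, $L_{(x,s)}(y,t)=(x,s)\cdot(y,t)=(xy,\beta(x,y,s)(t))$; since $\beta$ is constant, $\beta(x,y,s)=\beta(x,y,t')$ is independent of the third argument, so $L_{(x,s)}$ does not depend on $s$. Thus the left translations coincide on each fiber of $\pi$, which is exactly the covering condition of Definition \ref{Df:Covering}. (One should also note that $Y$ is assumed connected, so no extra hypothesis is needed.)

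For the converse (i) $\Rightarrow$ (ii), suppose $Y$ is a covering of $X$ via a surjective homomorphism $f:Y\to X$ with $L_x=L_y$ whenever $f(x)=f(y)$. Since $Y$ is connected, $\alpha=\ker{f}$ is a uniform congruence (as remarked after the definition of extension), so by Proposition \ref{Pr:Extension} we may write $Y\cong X\times_\beta S$ for a set $S$ and a cocycle $\beta\in Z^2(X,\sym{S})$, using the explicit formula $\beta([x],[y],s)=h_{[xy]}L_{h_{[x]}^{-1}(s)}h_{[y]}^{-1}$ from that proof. The task is then to show that this $\beta$ can be taken to be constant, i.e.\ that the inner $L_{h_{[x]}^{-1}(s)}$ does not depend on $s$. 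But this is immediate from the covering hypothesis: as $s$ ranges over $S$, the element $h_{[x]}^{-1}(s)$ ranges over the block $[x]$, and all elements of a single block have equal left translations in $Y$ precisely because $f(h_{[x]}^{-1}(s))=[x]$ is constant on the block. Hence $L_{h_{[x]}^{-1}(s)}$ is independent of $s$, so $\beta([x],[y],s)=\beta([x],[y])$ is a constant cocycle and $Y\cong X\times_\beta S$ with $\beta\in Z^2_c(X,\sym{S})$.

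I expect the only delicate point to be bookkeeping in the converse: one must confirm that the particular cocycle produced by the construction in Proposition \ref{Pr:Extension}, rather than merely some cohomologous cocycle, is already constant, and that the left-translation values $L_{h_{[x]}^{-1}(s)}$ are being read off in $Y$ (where the covering condition lives) and not in $X\times_\beta S$ before the isomorphism is in place. Once the identification of blocks of $\alpha$ with fibers of $f$ is made explicit, the constancy follows directly from $L_a=L_b$ for $a,b$ in the same block, and no genuine computation with the cocycle conditions \eqref{CCC} or \eqref{CQC} is required beyond what Proposition \ref{Pr:Extension} already guarantees.
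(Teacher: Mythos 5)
Your proof is correct and follows essentially the same route as the paper: both reduce to the extension picture of Proposition \ref{Pr:Extension} and observe that the covering condition (left translations agreeing on fibers of the projection) is precisely the constancy of the cocycle $\beta$. The only difference is presentational—the paper compresses both directions into a single biconditional computed in $X\times_\beta S$, while you additionally trace the explicit cocycle formula $\beta([x],[y],s)=h_{[xy]}L_{h_{[x]}^{-1}(s)}h_{[y]}^{-1}$ in the converse, which is a harmless (and careful) elaboration of the same idea.
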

\begin{proof}
Let $Y$ be an extension of $X$, say $Y=X\times_\beta S$ for $\beta\in Z^2(X,\sym{S})$. Then $(x,r)\cdot (y,t)=(x,s)\cdot(y,t)$ for every $x$, $y\in X$, $r$, $s$, $t\in S$ if and only if $\beta(x,y,r)=\beta(x,y,s)$ for every $x$, $y\in X$, $r$, $s\in S$, which is equivalent to $\beta\in Z^2_c(X,\sym{S})$.
\end{proof}

Let $X$ be a quandle and $S$ a nonempty set. The mapping defined by
\begin{displaymath}
    X\times X\longrightarrow \sym{S}, \quad (x,y) \mapsto 1
\end{displaymath}
is a constant cocycle, called the \emph{trivial cocycle} and denoted by $1$. It is easy to see that $X\times_1 S$ is the direct product of $X$ and the projection quandle over $S$. The covering $Y=X \times_1 S$ is called a \emph{trivial covering} of $X$.


Two coverings $f:Y\to X$, $f':Y'\to X$ of $X$ are said to be \emph{equivalent} if there is a quandle isomorphism $\phi:Y\to Y'$ such that $f'\circ \phi = f$.

Let $X=(X,\cdot)$ be a quandle. The \emph{adjoint group} $\mathrm{Adj}(X)$ of $X$ is the group with generators $\setof{e_x}{x\in X}$ and presenting relations $\setof{e_{x\cdot y} = e_x^{-1}e_ye_x}{x,\,y\in X}$. Following \cite[Definitions 1.7, 1.10]{E}, let $\epsilon:\mathrm{Adj}(X)\to \mathbb Z$ be the unique homomorphism such that $\epsilon(e_x)=1$ for every $x\in X$. Let $\mathrm{Adj}(X)^\circ$ be the kernel of $\epsilon$. The \emph{fundamental group} of $X$ based at $x\in X$ is defined as $\pi_1(X,x) = \setof{g\in\mathrm{Adj}(X)^\circ}{x^g=x}$. By \cite[Proposition 5.8]{E}, $\pi_1(X,x)$ is conjugate to $\pi_1(X,y)$ whenever $x$, $y$ are in the same orbit of $\lmlt{X}$. In particular, if $X$ is a connected quandle then the isomorphism type of $\pi_1(X,x)$ is independent of the base point $x$, and it is safe to write $\pi_1(X)$ instead of $\pi_1(X,x)$.

\begin{proposition}\label{Pr:SC}
The following conditions are equivalent for a connected quandle $X$:
\begin{enumerate}
\item[(i)] $\pi_1(X)$ is trivial,
\item[(ii)] every covering $Y\to X$ is equivalent to the trivial covering of $X$ over some set $S$,
\item[(iii)] $H^2_c(X,\sym{S})$ is trivial for every set $S$.
\end{enumerate}
\end{proposition}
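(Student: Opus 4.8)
The plan is to prove the three equivalences by connecting coverings to constant cocycles (already established in Proposition \ref{coverings and constant cocycles}) and then connecting both to the fundamental group via Eisermann's covering theory. The cleanest route is to prove the cycle (i) $\Rightarrow$ (ii) $\Rightarrow$ (iii) $\Rightarrow$ (i), since the middle implications are largely bookkeeping on top of results already recalled in the excerpt, while the interaction with $\pi_1(X)$ carries the conceptual weight.

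First I would dispose of the equivalence of (ii) and (iii). By Proposition \ref{coverings and constant cocycles}, every covering of $X$ has the form $X\times_\beta S$ for some $\beta\in Z^2_c(X,\sym{S})$, and by Proposition \ref{Pr:Categorical} two such extensions are isomorphic over $X$ (that is, equivalent as coverings, since the isomorphism commutes with the projection $\pi$) exactly when the defining cocycles are cohomologous. The trivial covering $X\times_1 S$ corresponds to the trivial cocycle $1$. Hence ``$H^2_c(X,\sym{S})$ is trivial for every $S$'' says precisely that every constant cocycle is cohomologous to $1$, which translates into ``every covering $Y\to X$ is equivalent to the trivial covering over $|S|$''. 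The only mild care needed is to check that the categorical isomorphism of Proposition \ref{Pr:Categorical} is genuinely an equivalence of coverings in the sense of $f'\circ\phi=f$; this is immediate because that proposition's commuting diagram is exactly the statement $\pi\circ\phi=\pi$.

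The substantial step is (i) $\Leftrightarrow$ (ii), the link with the fundamental group. Here I would invoke Eisermann's classification of coverings of a connected quandle. The key fact from \cite{E} is that connected coverings of $X$ correspond to subgroups of $\pi_1(X,x)$, and more generally that the coverings of $X$ are classified (up to equivalence) by permutation actions of the adjoint group $\mathrm{Adj}(X)^\circ$ on a set $S$ in which the point stabilizer contains, equivalently is controlled by, the fundamental group $\pi_1(X,x)$. If $\pi_1(X)=1$, then $\mathrm{Adj}(X)^\circ$ acts on each fibre in a way that forces the covering to split as the trivial product $X\times_1 S$, giving (i) $\Rightarrow$ (ii). Conversely, a nontrivial element $g\in\pi_1(X,x)$ yields a nontrivial connected covering (for instance via the coset action of $\mathrm{Adj}(X)^\circ$ on $\mathrm{Adj}(X)^\circ/\langle g\rangle$ restricted appropriately), which is not equivalent to any trivial covering, giving the contrapositive of (ii) $\Rightarrow$ (i).

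The main obstacle will be making the correspondence in the previous paragraph precise and self-contained, because it is exactly the content of Eisermann's long paper and is only gestured at in the definitions preceding the proposition (the adjoint group, $\epsilon$, $\mathrm{Adj}(X)^\circ$, and $\pi_1(X,x)$). I expect the honest proof to cite \cite[Theorem/Section on the Galois correspondence]{E} for the bijection between covering equivalence classes and $\pi_1(X)$-sets, rather than reconstruct it; the delicate point is that a covering is trivial \emph{as a covering} precisely when the associated $\pi_1(X)$-action is trivial, and that triviality of $H^2_c$ for \emph{all} $S$ (not just connected $Y$) is needed to detect a nontrivial $\pi_1(X)$, since a single nontrivial element already appears in a covering with fibre $S=\pi_1(X)/\langle g\rangle$ or $S=\{1,2\}$ under a suitable transitive action. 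I would therefore organize the argument so that the regular action of $\mathrm{Adj}(X)^\circ$ on a coset space exhibits a concrete nontrivial covering whenever $\pi_1(X)\neq 1$, closing the loop.
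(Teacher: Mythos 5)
Your proposal is correct and takes essentially the same route as the paper: the paper proves (ii) $\Leftrightarrow$ (iii) exactly as you do, by combining Proposition \ref{coverings and constant cocycles} with Proposition \ref{Pr:Categorical} (after noting that equivalent coverings have fibers of equal cardinality), and for (i) $\Leftrightarrow$ (ii) it simply cites \cite[Proposition 5.15]{E} rather than reconstructing Eisermann's Galois correspondence. Your closing instinct that the honest proof should cite Eisermann for that step, rather than rebuild the adjoint-group machinery, is precisely what the authors do.
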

\begin{proof}
The equivalence of (i) and (ii) is established in \cite[Proposition 5.15]{E}. Let us prove the equivalence of (ii) and (iii).

By Proposition \ref{coverings and constant cocycles}, any covering of $X$ is of the form $\pi:X\times_\beta S\to S$ for some nonempty set $S$. If $X\times_\beta S\to S$, $X\times_{\beta'} S'\to S'$ are two equivalent coverings of $X$, then $S$ and $S'$ have the same cardinality. It therefore suffices to investigate two coverings $X\times_\beta S\to S$ and $X\times_{\beta'} S\to S$ with $\beta$, $\beta'\in Z^2_c(X,\sym{S})$. By Proposition \ref{Pr:Categorical}, these two coverings are equivalent if and only if $\beta\sim\beta'$.
\end{proof}

\section{Normalized constant cocycles}\label{Sc:NormalizedCocycles}

In this section we start computing the constant cohomology set $H^2_c(X,\sym{S})$ of a latin quandle $X$. The situation is greatly simplified in the latin case because every cocycle of $H^2_c(X,\sym{S})$ can be assumed to be normalized:

\begin{definition}[{{compare \cite[Lemma 5.1]{G}}}]
Let $\X$ be a latin quandle, $S$ a nonempty set and $u\in X$. Then $\beta\in Z^2_c(X,\sym{S})$ is said to be \emph{$u$-normalized} if
\begin{displaymath}
    \beta(x,u) = 1
\end{displaymath}
for every $x\in X$.
\end{definition}

For $\beta\in Z^2_c(X,\sym{S})$ and $\sigma\in\sym{S}$ define $\beta^\sigma\in Z^2_c(X,\sym{S})$ by $\beta^\sigma(x,y) = \sigma\beta(x,y)\sigma^{-1}$.

\begin{proposition}\label{Prop:cocycle normalized}
Let $\X$ be a latin quandle, $S$ a nonempty set and $u\in X$. For $\beta\in Z^2_c(X,\sym{S})$ define $\beta_u\in Z^2_c(X,\sym{S})$ by
\begin{displaymath}
    \beta_u(x,y) = \beta((xy)/u,u)^{-1}\beta(x,y)\beta(y/u,u).
\end{displaymath}
Then $\setof{\beta_u^\sigma}{\sigma\in\sym{S}}$ is the set of all $u$-normalized cocycles cohomologous to $\beta$.
\end{proposition}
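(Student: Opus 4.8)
The plan is to exhibit $\beta_u$ as an explicit coboundary of $\beta$ and then to pin down the remaining freedom using the normalization condition. The first thing I would record is that, because $\beta$ is a \emph{constant} cocycle, the relation $\sim$ restricted to $Z^2_c(X,\sym S)$ simplifies: two constant cocycles $\beta,\beta'$ satisfy $\beta\sim\beta'$ precisely when there is a map $\gamma:X\to\sym S$ with $\beta'(x,y)=\gamma(xy)\beta(x,y)\gamma(y)^{-1}$, since the argument $\gamma(x)^{-1}(s)$ in the general coboundary formula is irrelevant when $\beta$ does not depend on $s$. I would also note, once and for all, that for \emph{any} $\gamma:X\to\sym S$ this formula sends a constant cocycle to a constant cocycle: condition \eqref{CQC} follows from idempotence together with $\beta(x,x)=1$, while \eqref{CCC} follows by a telescoping cancellation that uses left distributivity in the form $(xy)(xz)=x(yz)$, reducing \eqref{CCC} for $\beta'$ to \eqref{CCC} for $\beta$.

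With this in hand, the cohomology claim is immediate: taking $\gamma(z)=\beta(z/u,u)^{-1}$ gives $\gamma(xy)\beta(x,y)\gamma(y)^{-1}=\beta((xy)/u,u)^{-1}\beta(x,y)\beta(y/u,u)=\beta_u(x,y)$, so $\beta_u\in Z^2_c(X,\sym S)$ and $\beta_u\sim\beta$. Next I would verify that $\beta_u$ is $u$-normalized. This rests on two latin identities: from idempotence $R_u(u)=uu=u$, hence $u/u=u$; and since $R_u$ is a bijection, $(xu)/u=R_u^{-1}(R_u(x))=x$. Substituting $y=u$ then gives $\beta_u(x,u)=\beta(x,u)^{-1}\beta(x,u)\beta(u,u)=1$, using $\beta(u,u)=1$ from \eqref{CQC}. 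Conjugation by a fixed $\sigma$ fixes the identity, so each $\beta_u^\sigma$ is again $u$-normalized, and since $\beta_u^\sigma$ is the coboundary of $\beta_u$ by the \emph{constant} cochain $\gamma\equiv\sigma$, we get $\beta_u^\sigma\sim\beta_u\sim\beta$. This proves that every $\beta_u^\sigma$ lies in the target set.

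For the reverse inclusion I would start from an arbitrary $u$-normalized $\beta'$ cohomologous to $\beta$, hence to $\beta_u$, and choose $\gamma:X\to\sym S$ with $\beta'(x,y)=\gamma(xy)\beta_u(x,y)\gamma(y)^{-1}$. The key step --- and the place where latinity does the real work --- is to set $y=u$: since both $\beta'(x,u)=1$ and $\beta_u(x,u)=1$, this collapses to $\gamma(xu)=\gamma(u)$ for all $x$. Because $R_u$ is onto, the elements $xu$ exhaust $X$, so $\gamma$ is the constant map $\sigma:=\gamma(u)$, whence $\beta'(x,y)=\sigma\beta_u(x,y)\sigma^{-1}=\beta_u^\sigma(x,y)$. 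I expect the only genuinely mechanical obstacle to be the telescoping verification that the coboundary formula preserves \eqref{CCC}; everything else is a short manipulation, and the conceptual crux is simply recognizing the cochain $\gamma(z)=\beta(z/u,u)^{-1}$ and exploiting surjectivity of $R_u$ in the converse.
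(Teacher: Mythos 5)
Your proof is correct and takes essentially the same route as the paper: both hinge on the explicit cochain $\gamma(z)=\beta(z/u,u)^{-1}$ (up to a constant left factor $\sigma$) and on surjectivity of $R_u$ after setting $y=u$ in the coboundary relation. The only organizational difference is that you split the claim into two inclusions and, for the converse, argue that a cochain relating two $u$-normalized cocycles must be constant, whereas the paper solves in one equivalence chain for all cochains $\gamma$ whose coboundary of $\beta$ is $u$-normalized; the underlying computation is identical, and your extra verification that coboundaries preserve $Z^2_c(X,\sym{S})$ only makes explicit what the paper takes for granted.
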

\begin{proof}
We have $\delta\sim\beta$ if and only if $\delta(x,y)=\gamma(xy)\beta(x,y)\gamma(y)^{-1}$ for some $\gamma:X\to\sym{S}$. The following conditions are then equivalent for $\delta$: $\delta$ is $u$-normalized, $1=\delta(x,u) = \gamma(xu)\beta(x,u)\gamma(u)^{-1}$ for every $x\in X$, $\gamma(xu) = \gamma(u)\beta(x,u)^{-1}$ for every $x\in X$,
\begin{equation}\label{Eq:Gamma}
    \gamma(x) = \gamma(u)\beta(x/u,u)^{-1}
\end{equation}
for every $x\in X$, where we have used the latin property in the last step. Conversely, given $\sigma=\gamma(u)\in\sym{S}$, the formula \eqref{Eq:Gamma} defines a map $\gamma:X\to\sym{S}$ (it is well defined since $\gamma(u)\beta(u/u,u)^{-1} = \gamma(u)\beta(u,u)^{-1}=\gamma(u)$ by \eqref{CQC}). Then
\begin{displaymath}
    \delta(x,y){=}\gamma(xy)\beta(x,y)\gamma(y)^{-1}{=}\sigma\beta((xy)/u,u)^{-1}\beta(x,y)\beta(y/u,u)\sigma^{-1}{=}\sigma\beta_u(x,y)\sigma^{-1},
\end{displaymath}
so $\delta = \beta_u^\sigma$.
\end{proof}

\begin{corollary}\label{trivial cohomology and normalized}
Let $\X$ be a latin quandle, $S$ a nonempty set, and $u\in X$. Let $\beta$, $\beta'\in Z^2_c(X,\sym{S})$, and let $\delta$, $\delta'$ be $u$-normalized cocycles such that $\delta\sim\beta$ and $\delta'\sim\beta'$. Then $\beta\sim\beta'$ if and only if $\delta'=\delta^\sigma$ for some $\sigma\in\sym{S}$. Moreover, the following conditions are equivalent:
\begin{enumerate}
\item[(i)] $H^2_c(X,\sym{S})$ is trivial,
\item[(ii)] if $\beta\in Z^2_c(X,\sym{S})$, $\delta\sim\beta$ and $\delta$ is $u$-normalized then $\delta=1$,
\item[(iii)] $\beta_u=1$ for every $\beta\in Z^2_c(X,\sym{S})$.
\end{enumerate}
\end{corollary}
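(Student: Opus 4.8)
The plan is to prove Corollary \ref{trivial cohomology and normalized} directly from Proposition \ref{Prop:cocycle normalized}, using the fact that the latin hypothesis lets us pass to $u$-normalized representatives in each cohomology class. The main structural input is that, by Proposition \ref{Prop:cocycle normalized}, the $u$-normalized cocycles cohomologous to a given $\beta$ are exactly the conjugates $\setof{\beta_u^\sigma}{\sigma\in\sym{S}}$; in particular every cohomology class contains a $u$-normalized representative, and any two such representatives of the same class differ by a conjugation $\beta\mapsto\beta^\sigma$.

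First I would establish the opening assertion, namely that for $u$-normalized $\delta\sim\beta$ and $\delta'\sim\beta'$ one has $\beta\sim\beta'$ iff $\delta'=\delta^\sigma$ for some $\sigma\in\sym{S}$. For the forward direction, if $\beta\sim\beta'$ then $\delta\sim\beta\sim\beta'\sim\delta'$, so $\delta$ and $\delta'$ are two $u$-normalized cocycles in the same $\sim$-class; applying Proposition \ref{Prop:cocycle normalized} to $\delta$, the set of $u$-normalized cocycles cohomologous to $\delta$ is $\setof{\delta_u^\sigma}{\sigma\in\sym{S}}$, and since $\delta$ is already $u$-normalized one checks $\delta_u=\delta$ (the correction factors $\beta((xy)/u,u)^{-1}$ and $\beta(y/u,u)$ both collapse to $1$ when $\beta$ is $u$-normalized), whence $\delta'=\delta^\sigma$ for some $\sigma$. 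The reverse direction is immediate: conjugation by $\sigma$ is realized by the constant map $\gamma\equiv\sigma$ in the definition of $\sim$, so $\delta^\sigma\sim\delta$, and therefore $\beta\sim\delta=\delta^{\sigma^{-1}}\cdots\sim\delta'\sim\beta'$ whenever $\delta'=\delta^\sigma$.

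Next I would prove the chain of equivalences (i)$\Leftrightarrow$(ii)$\Leftrightarrow$(iii). For (i)$\Rightarrow$(ii): if $H^2_c(X,\sym{S})$ is trivial then every $\beta$ is cohomologous to the trivial cocycle $1$; given a $u$-normalized $\delta\sim\beta\sim 1$, note that $1$ is itself $u$-normalized, so by the first part $\delta=1^\sigma=\sigma\cdot 1\cdot\sigma^{-1}=1$. For (ii)$\Rightarrow$(iii): by Proposition \ref{Prop:cocycle normalized} the cocycle $\beta_u$ is $u$-normalized and cohomologous to $\beta$, so (ii) applied to $\delta=\beta_u$ forces $\beta_u=1$. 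For (iii)$\Rightarrow$(i): if $\beta_u=1$ for every $\beta$, then every $\beta$ satisfies $\beta\sim\beta_u=1$ by Proposition \ref{Prop:cocycle normalized}, so every class is the trivial class and $H^2_c(X,\sym{S})$ is trivial.

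I expect no serious obstacle here, as the statement is essentially a bookkeeping consequence of Proposition \ref{Prop:cocycle normalized}. The one point requiring genuine care is the identity $\delta_u=\delta$ for a cocycle $\delta$ that is already $u$-normalized, which is what makes the parametrization $\setof{\delta^\sigma}{\sigma\in\sym{S}}$ of the $u$-normalized representatives in a class well behaved; verifying it is a direct substitution into the formula for $\beta_u$ using $\delta(x,u)=1$, but it is the step on which the whole argument pivots and so should be stated explicitly rather than left implicit.
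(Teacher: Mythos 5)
Your proposal is correct and follows essentially the same route as the paper: the first assertion is extracted directly from Proposition \ref{Prop:cocycle normalized} (the paper leaves this "immediate," while you usefully make explicit the pivot identity $\delta_u=\delta$ for an already $u$-normalized $\delta$ and the fact that conjugation $\delta\mapsto\delta^\sigma$ is realized by a constant $\gamma$), and your proofs of (i)$\Rightarrow$(ii), (ii)$\Rightarrow$(iii), (iii)$\Rightarrow$(i) coincide with the paper's, including the use of $1$ being $u$-normalized in (i)$\Rightarrow$(ii). No gaps.
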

\begin{proof}
The first statement follows immediately from Proposition \ref{Prop:cocycle normalized}. Suppose that (i) holds, let $\beta\in Z^2_c(X,\sym{S})$, and let $\delta\sim\beta$ be $u$-normalized. Since $1$ is also $u$-normalized and $1\sim\beta$ by triviality of $H^2_c(X,\sym{S})$, we have $\delta=1$ by the first statement, establishing (ii). Clearly, (ii) implies (iii). Finally, if (iii) holds then $\beta\sim\beta_u=1$ for every $\beta\in Z^2_c(X,\sym{S})$, so $H^2_c(X,\sym{S})$ is trivial.
\end{proof}

Many identities for normalized cocycles can be derived from \eqref{CCC} and \eqref{CQC}. We will later need:

\begin{lemma}\label{some prop of beta}
Let $\X$ be a latin quandle and let $\beta $ be a $u$-normalized cocycle. Then
\begin{displaymath}
\beta \left( u/\left( u/x\right) ,x\right) =\beta \left( u/x,x\right)
\end{displaymath}
for every $x\in X$. Moreover $u/(u/x)\cdot x=u$ if and only if $x=u$.
\end{lemma}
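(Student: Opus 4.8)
The plan is to derive both statements from the cocycle conditions \eqref{CCC}, \eqref{CQC}, the idempotent law \eqref{ID}, the latin property, and the normalization $\beta(x,u)=1$. The crucial preliminary step I would establish first is that a $u$-normalized cocycle is trivial in its \emph{first} argument at $u$ as well, i.e.\ $\beta(u,w)=1$ for every $w\in X$. To obtain this I would substitute $x=z=u$ into \eqref{CCC}, giving $\beta(uy,uu)\beta(u,u)=\beta(u,yu)\beta(y,u)$; using $uu=u$ from \eqref{ID}, the values $\beta(u,u)=\beta(y,u)=1$, and once more $\beta(uy,u)=1$, this collapses to $\beta(u,yu)=1$. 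Since $R_u$ is a bijection on the latin quandle $X$, the element $yu$ ranges over all of $X$ as $y$ does, whence $\beta(u,w)=1$ for every $w$.

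For the first identity I would write $a=u/x$ and $b=u/(u/x)=u/a$, so that $a\cdot x=u$ and $b\cdot a=u$ by the definition of right division. The goal $\beta(b,x)=\beta(a,x)$ is exactly the right-hand side of the weaker cocycle condition \eqref{WCC} with the triple $(b,a,x)$ in place of $(x,y,z)$. The corresponding left-hand side is $\beta(b\cdot a,b\cdot x)=\beta(b,a\cdot x)$, which reads $\beta(u,bx)=\beta(b,u)$. Here the right side is $1$ by normalization and the left side is $1$ by the preliminary observation, so the left-hand condition of \eqref{WCC} holds; therefore its right-hand condition holds too, which is precisely the desired equality.

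For the second statement I would keep $a=u/x$ and $b=u/a$. If $x=u$, then \eqref{ID} gives $R_u(u)=uu=u$, hence $u/u=u$, so $a=b=u$ and $u/(u/x)\cdot x=u\cdot u=u$. Conversely, suppose $b\cdot x=u$. Then $b=u/x=a$, so $u/a=a$, that is $a\cdot a=u$; but $a\cdot a=a$ by \eqref{ID}, forcing $a=u$. Thus $u/x=u$, equivalently $u\cdot x=u=u\cdot u$, and left cancellation (injectivity of $L_u$ from \eqref{LQG}) yields $x=u$.

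The only genuinely non-routine point is the preliminary observation $\beta(u,w)=1$; once the specialization $x=z=u$ of \eqref{CCC} is chosen, everything else is a direct application of \eqref{WCC} together with the quasigroup identities, so I expect no serious obstacle beyond locating that substitution.
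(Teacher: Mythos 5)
Your proof is correct, and at its heart it makes the same move as the paper: the paper obtains the identity by setting $x=u/(u/z)$ and $y=u/z$ in \eqref{CCC}, which is precisely your triple $(b,a,x)$ fed into \eqref{WCC}. Where you differ is that your argument is self-contained while the paper's, as written, is not. After that substitution, \eqref{CCC} reads $\beta(u,(u/(u/z))\cdot z)\,\beta(u/(u/z),z)=\beta(u/(u/z),u)\,\beta(u/z,z)$, and normalization only removes the factor $\beta(u/(u/z),u)$; the leftover factor $\beta(u,(u/(u/z))\cdot z)$ has $u$ in its \emph{first} argument, and its second argument is not $u$ unless $z=u$ (that is exactly the content of the ``Moreover'' clause). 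So the paper's one-line proof silently invokes the fact $\beta(u,w)=1$ for all $w$ --- your preliminary observation --- which the paper only establishes later, in Proposition \ref{Prop:invariance_under_g} (via $\beta(u,xu)=\beta(ux,u)\beta(u,u)=1$ and bijectivity of $R_u$, the same computation you give). There is no circularity, since that later proof does not use the present lemma, but your version makes the dependence explicit and puts the steps in the right order. Your treatment of the ``Moreover'' statement coincides with the paper's chain of equivalences.
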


\begin{proof}
Setting $x=u/y$ and $y=u/z$ in \eqref{CCC}, we get $\beta( u/ ( u/z) ,z) =\beta  ( u/z,z)$ for every $z\in X$. Moreover,
\begin{displaymath}
u/(u/x)\cdot x=u \ \Leftrightarrow \ u/(u/x)=u/x \ \Leftrightarrow \ u/x=u \ \Leftrightarrow \ x=u.
\end{displaymath}
\end{proof}

\section{Three bijections on $X\times X$ that preserve normalized cocycles}\label{Sc:ThreeBijections}

Given a mapping $\alpha:A\to B$ and a bijection $\ell:A\to A$, we say that $\alpha$ is \emph{$\ell$-invariant} if $\alpha(x) = \alpha(\ell(x))$ for every $x\in A$.

In this section we introduce three bijections of $X\times X$ (where $X$ is a latin quandle) under which all normalized cocycles are invariant. We will use these bijections throughout the rest of the paper.

Let $X$ be a latin quandle and $u\in X$. Define
\begin{align}
    &f:X\times X\to X\times X,\quad (x,y)\mapsto (x\cdot y/u,xu),\notag\\
    &g:X\times X\to X\times X,\quad (x,y)\mapsto (ux,uy),\label{Eq:fgh}\\
    &h:X\times X\to X\times X,\quad (x,y)\mapsto (y/(x\backslash u)\cdot x,y).\notag
\end{align}
The element $u$ on which $f$, $g$, $h$ depend will always be understood from the context.

\begin{proposition}\label{Prop:invariance under f}
Let $X$ be a latin quandle and $u\in X$. Then $f\in\sym{X\times X}$ and every $u$-normalized cocycle is $f$-invariant.
\end{proposition}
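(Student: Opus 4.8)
The plan is to verify the two assertions separately: first that $f$ is a bijection of $X\times X$, and then that $\beta(f(x,y))=\beta(x,y)$ for every $u$-normalized cocycle $\beta$ and every $(x,y)\in X\times X$.

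For bijectivity I would simply exhibit the inverse explicitly, using that $X$ is latin (so every $L_x$ and every $R_x$ is a permutation and the divisions $\backslash$, $/$ are everywhere defined). Writing $f(x,y)=(x(y/u),xu)$, observe that the second coordinate $xu=R_u(x)$ recovers $x=b/u$ from any prescribed value $b$, and then the first coordinate $x(y/u)=L_x(y/u)$ recovers $y/u=x\backslash a$, hence $y=(x\backslash a)u$, from any prescribed value $a$. One checks directly that $(a,b)\mapsto\big(b/u,\,((b/u)\backslash a)\,u\big)$ is a two-sided inverse of $f$; this is routine and uses only the quasigroup laws $(b/u)u=b$ and $x(x\backslash a)=a$.

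The heart of the matter is the invariance, and the key step is to extract from the cocycle condition a relation that collapses under normalization. Specializing \eqref{CCC} at $z=u$ gives $\beta(xy,xu)\,\beta(x,u)=\beta(x,yu)\,\beta(y,u)$, and since $\beta$ is $u$-normalized the outer factors $\beta(x,u)$ and $\beta(y,u)$ are both $1$. This yields the auxiliary identity
$$\beta(xy,xu)=\beta(x,yu),$$
valid for all $x,y\in X$. I would then substitute $y/u$ for $y$ (permissible since $X$ is latin), so that the right-hand argument becomes $(y/u)u=y$. The identity then reads $\beta\big(x(y/u),xu\big)=\beta(x,y)$, and the left-hand side is exactly $\beta(f(x,y))$. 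Hence every $u$-normalized cocycle is $f$-invariant.

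The only genuine insight here is the choice of specialization: setting the third variable of \eqref{CCC} equal to the normalizing element $u$ is precisely what annihilates the two \emph{boundary} factors $\beta(x,u)$ and $\beta(y,u)$ and leaves a two-term identity of exactly the shape $\beta(f(x,y))=\beta(x,y)$ after a single substitution. I do not expect any serious obstacle beyond recognizing this specialization; the bijectivity and the final cancellation $(y/u)u=y$ are routine quasigroup bookkeeping. The one point to keep straight is the grouping in the first coordinate of $f$, namely that $x\cdot y/u$ is to be read as $x\cdot(y/u)$, which is exactly what makes the substitution $y\mapsto y/u$ land on the nose.
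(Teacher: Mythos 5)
Your proof is correct and takes essentially the same route as the paper's: the same explicit inverse $(a,b)\mapsto(b/u,((b/u)\backslash a)u)$ establishes bijectivity, and your specialization of \eqref{CCC} at $z=u$ (using normalization to cancel $\beta(x,u)$ and $\beta(y,u)$) is precisely the paper's appeal to \eqref{WCC}, followed by the identical substitution $y\mapsto y/u$ to land on $\beta(f(x,y))=\beta(x,y)$.
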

\begin{proof}
It is easy to see that $f$ has an inverse, namely the mapping $(x,y)\mapsto (y/u,(y/u)\ldiv x\cdot u)$. Let $\beta$ be a $u$-normalized cocycle. Then \eqref{WCC} implies $\beta(xy,xu) = \beta(x,yu)$ for every $x$, $y\in X$. With $z=yu$, we obtain $\beta(x,z) = \beta(x,yu) = \beta(xy,xu) = \beta(x(z/u),xu) = \beta(f(x,z))$.
\end{proof}

\begin{lemma}\label{Lm:LatinEq}
Let $X$ be a latin quandle, $u\in X$ and $\beta$ a constant cocycle. Then the following conditions are equivalent:
\begin{enumerate}
\item[(i)] $\beta(u,x) =1 $ for every $x\in X$,
\item[(ii)] $\beta$ is $g$-invariant.
\end{enumerate}
\end{lemma}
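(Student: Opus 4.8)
The plan is to prove both implications by specializing the constant cocycle condition \eqref{CCC} at suitable arguments, combining this with idempotence, \eqref{CQC}, and the latin property. Recall that $g$-invariance of $\beta$ means exactly that $\beta(ux,uy)=\beta(x,y)$ for all $x$, $y\in X$, since $g(x,y)=(ux,uy)$.

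For the implication (i)$\Rightarrow$(ii), I would substitute $u$ for the leftmost argument of \eqref{CCC} to obtain
\[
    \beta(ux,uy)\beta(u,y)=\beta(u,xy)\beta(x,y).
\]
Under hypothesis (i) both $\beta(u,y)$ and $\beta(u,xy)$ equal $1$, so the identity collapses to $\beta(ux,uy)=\beta(x,y)$, which is precisely $g$-invariance.

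For (ii)$\Rightarrow$(i), the idea is to set the first and third arguments of \eqref{CCC} equal to $u$, which gives
\[
    \beta(uy,uu)\beta(u,u)=\beta(u,yu)\beta(y,u).
\]
Idempotence yields $uu=u$ and \eqref{CQC} yields $\beta(u,u)=1$; moreover $g$-invariance applied to the pair $(y,u)$ gives $\beta(uy,u)=\beta(y,u)$. Substituting these simplifications reduces the equation to $\beta(y,u)=\beta(u,yu)\beta(y,u)$, whence $\beta(u,yu)=1$ for every $y\in X$. Since $X$ is latin, the right translation $R_u$ is a bijection, so $yu$ ranges over all of $X$ as $y$ does, and therefore $\beta(u,w)=1$ for every $w\in X$, which is (i).

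I expect no serious obstacle here; the only delicate point is the choice of specialization in the reverse direction. The naive attempt --- rewriting $\beta(uy,uz)$ via $g$-invariance and then manipulating the resulting identity $\beta(y,z)\beta(u,z)=\beta(u,yz)\beta(y,z)$ for a general $z$ --- only shows that the elements $\beta(u,w)$ are pairwise conjugate, which is too weak. Specializing to $z=u$ is what makes \eqref{CQC} cancel the conjugating factor, after which the latin property propagates triviality from $\beta(u,yu)$ to all of $\beta(u,\,\cdot\,)$.
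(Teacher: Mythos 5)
Your proof is correct and follows essentially the same approach as the paper: both directions specialize the constant cocycle condition \eqref{CCC} and combine it with \eqref{CQC}, idempotence, and the latin property. The forward implication is identical to the paper's, and the reverse differs only in the choice of specialization (you take $x=z=u$ and then use surjectivity of $R_u$, while the paper substitutes $x=u/y$, $z=y$ to get $\beta(u,y)=1$ directly), which is an immaterial variation.
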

\begin{proof}
If (i) holds, then
\begin{displaymath}
    \beta ( ux,uy ) \overset{\text{(i)}}{=}
    \beta ( ux,uy ) \beta  ( u,y ) \overset{\eqref{CCC}}{=}
    \beta ( u,xy ) \beta  ( x,y ) \overset{\text{(i)} }{=}
    \beta ( x,y )
\end{displaymath}
for every $x$, $y\in X$. Conversely, if (ii) holds, let $x=u/y$ and verify
\begin{align*}
    \beta ( u,y )&\overset{\eqref{CCC}}{=} \beta ( u\cdot u/y ,uy )^{-1}\beta ( u,  u/y\cdot y) \beta ( u/y,y)\\
    &\overset{\phantom{\eqref{CCC}}}{=} \beta ( u\cdot  u/y ,uy ) ^{-1}\beta  ( u,u )\beta  ( u/y,y )\\
    &\overset{\eqref{CQC} }{=} \beta ( u\cdot u/y ,uy ) ^{-1}\beta  ( u/y,y ) \overset{\text{(ii)}}{=} 1
\end{align*}
for every $y\in X$.
\end{proof}

We remark that (i) implies (ii) in Lemma \ref{Lm:LatinEq} even if $X$ is an arbitrary quandle, not necessarily latin.

\begin{proposition}\label{Prop:invariance_under_g}
Let $\X$ be a latin quandle and $u\in X$. Then $g\in\sym{X\times X}$ and every $u$-normalized cocycle is $g$-invariant.
\end{proposition}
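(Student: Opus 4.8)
The plan is to split the statement into two independent parts: first that $g$ is a bijection of $X\times X$, and then that every $u$-normalized cocycle is $g$-invariant, the latter via Lemma \ref{Lm:LatinEq}.

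For the bijectivity I would simply observe that $g=L_u\times L_u$, i.e., $g(x,y)=(L_u(x),L_u(y))$. Since $X$ is a left quasigroup, $L_u\in\sym{X}$ by \eqref{LQG}, so $g$ is a bijection of $X\times X$ with inverse $(x,y)\mapsto(u\ldiv x,u\ldiv y)$. This part is immediate and uses nothing about $\beta$.

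For the invariance, the key reduction is Lemma \ref{Lm:LatinEq}: a constant cocycle is $g$-invariant as soon as $\beta(u,x)=1$ for every $x\in X$ (this is the implication (i)$\Rightarrow$(ii), which, as remarked after the lemma, does not even require latinity). So it suffices to prove that a $u$-normalized cocycle $\beta$ satisfies $\beta(u,x)=1$ for all $x$. The idea is to feed the most degenerate substitution into the cocycle condition: setting $x=z=u$ and leaving $y$ arbitrary in \eqref{CCC} gives
\[
    \beta(uy,uu)\,\beta(u,u)=\beta(u,yu)\,\beta(y,u).
\]
I would then collapse the left-hand side using idempotence $uu=u$ from \eqref{ID} together with $\beta(u,u)=1$ from \eqref{CQC}, and the right-hand side using the normalization $\beta(y,u)=1$, obtaining $\beta(uy,u)=\beta(u,yu)$. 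Since $\beta(uy,u)=1$ by normalization once more, this reads $\beta(u,yu)=1$ for every $y\in X$.

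The last step is where latinity is used essentially: because $R_u\in\sym{X}$, the product $yu$ ranges over all of $X$ as $y$ does, so $\beta(u,yu)=1$ for all $y$ is the same as $\beta(u,x)=1$ for all $x\in X$. Applying Lemma \ref{Lm:LatinEq} then delivers the $g$-invariance of $\beta$. I do not expect any serious obstacle: once one notices that the double substitution $x=z=u$ converts \eqref{CCC} into a relation between $\beta(uy,u)$ and $\beta(u,yu)$, everything else is forced by normalization, idempotence, and \eqref{CQC}. The only point that genuinely needs the latin hypothesis, rather than mere connectedness, is the surjectivity of $R_u$ in this final step.
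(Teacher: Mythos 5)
Your proposal is correct and follows essentially the same route as the paper: bijectivity via $g=L_u\times L_u$, reduction to showing $\beta(u,x)=1$ via Lemma \ref{Lm:LatinEq}, the substitution $x=z=u$ in \eqref{CCC} combined with idempotence, \eqref{CQC} and normalization, and finally surjectivity of $R_u$ to pass from $\beta(u,yu)=1$ to $\beta(u,x)=1$ for all $x$. The paper merely writes the same computation as a one-line chain starting from $\beta(u,xu)=\beta(u,xu)\beta(x,u)$, so there is no substantive difference.
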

\begin{proof}
Since $g=L_u\times L_u$, we obviously have $g\in\sym{X\times X}$ for any quandle $X$. Suppose now that $\beta$ is a $u$-normalized cocycle. In view of Lemma \ref{Lm:LatinEq}, it suffices to prove that $\beta(u,x)=1$ for every $x\in X$. Now,
\begin{displaymath}
    \beta(u,xu)=\beta(u,xu)\beta(x,u)\overset{\eqref{CCC}}{=}\beta(ux,u)\beta(u,u)=1,
\end{displaymath}
and we are done because $R_u$ is a bijection.
\end{proof}

\begin{lemma}
The following identities hold in a latin quandle:
\begin{align}
    (xy)/z &= x(y/(x\ldiv z)),\label{Eq:LID2}\\
    (x/y)(zy) &= ((x/y)z)x.\label{Eq:LID3}
\end{align}
\end{lemma}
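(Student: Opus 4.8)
The plan is to derive both identities directly from the two structural facts recalled in the text: that each left translation $L_x$ is an automorphism of the full quasigroup $(X,\triangleright,\ldiv,/)$, so that $x(a/b) = (xa)/(xb)$ for all $a$, $b$, and that $\triangleright$ is left distributive \eqref{LD}. Neither identity requires induction or case analysis; each is a short chain of rewrites.

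For \eqref{Eq:LID2} I would begin from the defining property of left division, $z = x\cdot(x\ldiv z)$, and substitute it into the second argument of the outer right division:
\begin{displaymath}
(xy)/z = (xy)/\bigl(x(x\ldiv z)\bigr).
\end{displaymath}
Applying the automorphism property of $L_x$ with respect to $/$ (reading $x(a/b)=(xa)/(xb)$ from right to left with $a=y$ and $b=x\ldiv z$) turns the right-hand side into $x\bigl(y/(x\ldiv z)\bigr)$, which is exactly the claimed value.

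For \eqref{Eq:LID3} I would set $a=x/y$ and use that right division inverts right translation, so $a\triangleright y=(x/y)\triangleright y=x$. Expanding $a\triangleright(z\triangleright y)$ by left distributivity \eqref{LD} gives $(a\triangleright z)\triangleright(a\triangleright y)$, and replacing the factor $a\triangleright y$ by $x$ yields
\begin{displaymath}
(x/y)(zy) = a(zy) = (az)(ay) = \bigl((x/y)z\bigr)x,
\end{displaymath}
as required. The only point demanding care is the bookkeeping of the two division conventions, $x\ldiv y=L_x^{-1}(y)$ versus $x/y=R_y^{-1}(x)$, so that the cancellations $z=x(x\ldiv z)$ and $(x/y)\triangleright y=x$ are performed on the correct side; beyond fixing these conventions there is no genuine obstacle.
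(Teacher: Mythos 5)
Your proposal is correct and follows essentially the same route as the paper: \eqref{Eq:LID2} comes from the fact that $L_x$ is an automorphism of the whole quasigroup structure, i.e.\ $x(y/z)=(xy)/(xz)$, with $x\ldiv z$ substituted for $z$, and \eqref{Eq:LID3} is left distributivity combined with $(x/y)y=x$. No gaps; the bookkeeping of the two division operations is handled correctly.
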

\begin{proof}
For \eqref{Eq:LID2}, substitute $x\ldiv z$ for $z$ in $x(y/z) = (xy)/(xz)$. The identity \eqref{Eq:LID3} follows immediately from left distributivity and $(x/y)y=x$.
\end{proof}

\begin{proposition}\label{Prop:invariance_under_h}
\label{Prop:invariance under left action}Let $\X$ be a latin quandle and $u\in X$. Then $h\in\sym{X\times X}$ and every $u$-normalized cocycle is $h$-invariant.
\end{proposition}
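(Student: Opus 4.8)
The plan is to treat the two assertions separately, since the bijectivity of $h$ and the $h$-invariance of normalized cocycles rely on quite different tools.

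For bijectivity, the key remark is that $h$ fixes the second coordinate, so it suffices to show that for each fixed $y$ the map $\phi_y\colon x\mapsto (y/(x\ldiv u))\cdot x$ is a permutation of $X$. Abbreviating $w=y/(x\ldiv u)$, so that $w\cdot(x\ldiv u)=y$ and $x\cdot(x\ldiv u)=u$, I would first record the identity
\begin{displaymath}
\phi_y(x)\cdot y=(w\cdot x)\cdot\bigl(w\cdot(x\ldiv u)\bigr)=w\cdot\bigl(x\cdot(x\ldiv u)\bigr)=w\cdot u,
\end{displaymath}
which follows from left distributivity. This shows that $w=(\phi_y(x)\cdot y)/u$, and hence $x=w\ldiv\phi_y(x)$, is recoverable from the pair $(\phi_y(x),y)$. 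I therefore expect $h$ to be invertible with
\begin{displaymath}
h^{-1}(x,y)=\Bigl(\bigl((xy)/u\bigr)\ldiv x,\;y\Bigr),
\end{displaymath}
and I would confirm this by checking $h\circ h^{-1}=h^{-1}\circ h=\mathrm{id}$, a routine quasigroup computation that uses \eqref{CONJ} together with the latin identities \eqref{Eq:LID2} and \eqref{Eq:LID3}.

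For $h$-invariance, let $\beta$ be a $u$-normalized cocycle and keep the abbreviation $w=y/(x\ldiv u)$, so that the claim reduces to $\beta(w\cdot x,y)=\beta(x,y)$. The idea is to apply the cocycle condition \eqref{CCC} twice so that the same auxiliary factor appears in both instances and then cancels. Instantiating \eqref{CCC} at $(w,x,x\ldiv u)$, and using $w\cdot(x\ldiv u)=y$, $x\cdot(x\ldiv u)=u$ and the normalization $\beta(w,u)=1$, gives
\begin{displaymath}
\beta(w\cdot x,y)\,\beta(w,x\ldiv u)=\beta(x,x\ldiv u).
\end{displaymath}
Instantiating \eqref{CCC} instead at $(x,w,x\ldiv u)$, and using $\beta(x\cdot w,u)=1$, gives
\begin{displaymath}
\beta(x,x\ldiv u)=\beta(x,y)\,\beta(w,x\ldiv u).
\end{displaymath}
Comparing the two displays and cancelling the invertible factor $\beta(w,x\ldiv u)\in\sym{S}$ yields $\beta(w\cdot x,y)=\beta(x,y)$, which is exactly $\beta(h(x,y))=\beta(x,y)$.

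The heart of the argument, and the step I expect to require the most care, is the invariance part: everything hinges on choosing the two instances of \eqref{CCC} so that the common term $\beta(x,x\ldiv u)$ and the correction factor $\beta(w,x\ldiv u)$ line up. The appearance of $x\ldiv u$, which forces $x\cdot(x\ldiv u)=u$, together with the normalization $\beta(\cdot,u)=1$, is precisely what collapses the remaining terms; without normalization the two instances would not combine. The bijectivity, by contrast, is essentially mechanical once the relation $\phi_y(x)\cdot y=w\cdot u$ is spotted.
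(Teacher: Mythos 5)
Your proof is correct, and it splits naturally into a half that mirrors the paper and a half that does not. The invariance argument is in substance the paper's own proof in different variables: the paper likewise applies \eqref{CCC} twice with the first two arguments swapped --- at $(u/y,\,x,\,y)$ and $(x,\,u/y,\,y)$ --- uses the normalization $\beta(\cdot,u)=1$ to kill two factors, cancels the common term to get $\beta(u/y,xy)=\beta(x\cdot u/y,xy)$, and then changes variables ($z=u/y$, $v=xy$) to read off $h$-invariance; your instantiation at $(w,\,x,\,x\ldiv u)$ and $(x,\,w,\,x\ldiv u)$ with $w=y/(x\ldiv u)$ is the same cancellation trick performed at the source point, which spares you the final substitution. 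The bijectivity half is where you genuinely depart: the paper takes an inverse formula found by automated deduction, $k(x,y)=(u/((xy/u)\ldiv y),\,y)$, and verifies both compositions by computation with \eqref{Eq:LID2} and \eqref{Eq:LID3}, whereas you \emph{derive} the inverse from the distributivity identity $(w\cdot x)\cdot y=w\cdot(x\cdot(x\ldiv u))=w\cdot u$, which explains where the formula comes from; your expression $(((xy)/u)\ldiv x,\,y)$ necessarily agrees with the paper's $k$ by uniqueness of inverses. One caveat you should make explicit: recoverability of $x$ from $(\phi_y(x),y)$ gives only injectivity of $\phi_y$, which does not suffice when $X$ is infinite, so the deferred check $h\circ h^{-1}=\mathrm{id}$ is genuinely needed. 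It does go through, and by the same token as your key identity: given $(x',y)$, put $v=(x'y)/u$ and $x=v\ldiv x'$; then left distributivity gives $x'\cdot\bigl(v\cdot(x\ldiv u)\bigr)=(v\cdot x)\cdot\bigl(v\cdot(x\ldiv u)\bigr)=v\cdot\bigl(x\cdot(x\ldiv u)\bigr)=v\cdot u=x'\cdot y$, so cancelling $L_{x'}$ yields $v=y/(x\ldiv u)$ and hence $h(x,y)=(v\cdot x,y)=(x',y)$. (Note that only left distributivity and cancellation are needed here, not \eqref{CONJ} or \eqref{Eq:LID2}--\eqref{Eq:LID3} as you predicted.)
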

\begin{proof}
We will show that
\begin{displaymath}
    k(x,y) = (u/((xy/u)\ldiv y),y)
\end{displaymath}
is the inverse of $h$. (The mapping $k$ was found by automated deduction \cite{McCune}.) It suffices to show that $h$, $k$ are inverse to each other in the first coordinate. We will freely use the quasigroup identities $x/(y\ldiv x)=y$ and $(x/y)\ldiv x = y$.

The first coordinate of $h(k(x,y))$ is equal to
\begin{align*}
    &y/[ (u/((xy/u)\ldiv y))\ldiv u]\cdot u/((xy/u)\ldiv y) \\
    &=y/[(xy/u)\ldiv y]\cdot u/((xy/u)\ldiv y)\\
    &=(xy/u)\cdot u/((xy/u)\ldiv y),
\end{align*}
which is an expression of the form $a\cdot b/(a\ldiv c)$ and therefore, by \eqref{Eq:LID2}, equal to $((xy/u)u)/y = xy/y = x$.

The first coordinate of $k(h(x,y))$ is equal to
\begin{equation}\label{Eq:Aux1}
    u/ [(((y/(x\ldiv u)\cdot x)y)/u)\ldiv y].
\end{equation}
Since $(y/(x\ldiv u)\cdot x)y$ is of the form $(a/b)c\cdot a$, it is by \eqref{Eq:LID3} equal to $(y/(x\ldiv u))(x\cdot x\ldiv u) = y/(x\ldiv u)\cdot u$, and substituting this back into \eqref{Eq:Aux1} yields
\begin{displaymath}
    u/ [((y/(x\ldiv u)\cdot u)/u)\ldiv y] = u/[(y/(x\ldiv u))\ldiv y] = u/(x\ldiv u) = x.
\end{displaymath}
We have proved that $k$ is the inverse of $h$.

Let $\beta$ be a $u$-normalized cocycle. Then we have
\begin{align*}
    \beta(u/y,y)&=\beta(u/y\cdot x,u)\beta(u/y,y)\\
    &=\beta(u/y\cdot x,u/y\cdot y)\beta(u/y,y)\overset{\eqref{CCC}}{=} \beta(u/y,xy)\beta(x,y),
\end{align*}
and also
\begin{displaymath}
    \beta(u/y,y) = \beta(x,u)\beta(u/y,y) = \beta(x,u/y\cdot y)\beta(u/y,y)
    \overset{\eqref{CCC}}{=} \beta(x\cdot u/y,xy)\beta(x,y).
\end{displaymath}
Therefore $\beta(u/y,xy) = \beta(x\cdot u/y,xy)$, and with $u/y=z$, $xy=v$ we have $x=v/y = v/(z\ldiv u)$ and $\beta(z,v) = \beta(v/(z\ldiv u)\cdot z,v) = \beta(h(z,v))$.
\end{proof}

\section{Orbits of the three bijections on $X\times X$}\label{Sc:Orbits}

For $\ell\in\sym{A}$ and $a\in A$ let $O_\ell(a)$ be the orbit of $a$ under the natural action of $\langle\ell\rangle$ on $A$, and let $\mathcal O_\ell=\setof{O_\ell(a)}{a\in A}$ be the collection of all $\ell$-orbits.

In this section we study the orbits of the bijections $f$, $g$, $h$ on $X\times X$ defined in \eqref{Eq:fgh}, and also the orbits of the induced action of $\langle f,h\rangle$ on $\mathcal O_g$.

Let $X$ be a quandle. Denote by $p$ the \emph{product mapping}
\begin{displaymath}
    p:X\times X\to X,\quad (x,y)\mapsto xy.
\end{displaymath}
For $z\in X$, the fiber $p^{-1}(z)$ is equal to
\begin{displaymath}
    p^{-1}(z) = \setof{(x,y)\in X\times X}{xy=z} = \setof{(x,x\ldiv z)}{x\in X}
\end{displaymath}
and hence has cardinality $|X|$. Moreover, since $p(f(x,y)) = p(x\cdot y/u,xu) = (x\cdot y/u)(xu) = x(y/u\cdot u) = xy = p(x,y)$, every fiber is a union of $f$-orbits.

We have $O_g(u,u)=\{(u,u)\}$, and we collect some additional orbits of $g$ by setting
\begin{align*}
    \mathcal O_g^f &= \setof{O_g(x,xu)}{u\ne x\in X},\\
    \mathcal O_g^u &= \setof{O_g(x,x\ldiv u)}{u\neq x\in X}.
\end{align*}
The notation is explained as follows. By Lemma \ref{Lm:some orbits of g} below, $\bigcup \mathcal O_g^f=\setof{(x,xu)}{u\ne x\in X}$ and $\bigcup \mathcal O_g^u=\setof{(x,x\ldiv u)}{u\ne x\in X}$. By Proposition \ref{Prop:orbit sizes of f} then, $\{(u,u)\}\cup \bigcup\mathcal O_g^f$ are precisely the fixed points of $f$, while $\{(u,u)\}\cup \bigcup \mathcal O_g^u$ is the fiber $p^{-1}(u)$.

We will ultimately prove that certain quandles are simply connected by the following strategy, exploiting the invariance under $f$, $g$ and $h$ of $u$-normalized cocycles (see Propositions \ref{Prop:invariance under f}, \ref{Prop:invariance_under_g} and \ref{Prop:invariance under left action}). For a $u$-normalized cocycle $\beta$, we first partition its domain $X\times X$ into $g$-orbits $\mathcal O_g$ on which $\langle f,h\rangle$ acts by Proposition \ref{Pr:InducedAction}. By Corollary \ref{action of omega on diag}, $f$ acts on both $\mathcal O_g^u$ and $\mathcal O_g^f$, while $h$ most definitely does not.  The bijection $h$ is much easier to understand in affine connected quandles, cf. Lemma \ref{geometric prog}. The affine-over-cyclic case of Theorem \ref{Th:Main} then easily follows. In the doubly transitive case, we will show that there are at most five orbits of $\langle f,g,h\rangle$, namely $O_g(u,u)$, $\bigcup\mathcal O_g^u$, $\bigcup\mathcal O_g^f$ and certain sets $\bigcup\mathcal O_g^1$, $\bigcup\mathcal O_g^2$ introduced later. (We note that $\mathcal O_g^f$, $\mathcal O_g^u$ need not be disjoint, but their intersection is easy to understand, cf. Lemma \ref{Lm:some orbits of g}.) A careful analysis of the orbit sizes of $h$ then shows that the four sets $\mathcal O_g^u$, $\mathcal O_g^f$, $\mathcal O_g^1$ and $\mathcal O_g^2$ must be linked by $h$ (as long as $|X|\ne 4$), establishing the main result.

\begin{lemma}\label{f orbits}
Let $\X$ be a latin quandle. Then for every $x$, $y\in X$ and every $k\in\mathbb Z$ we have
\begin{equation*}
    f^k(x,y) =(f_k(x,y),f_{k-1}(x,y)u),
\end{equation*}
where
\begin{equation}\label{fks}
    f_k(x,y) = \left\{
    \begin{array}{l}
        (L_{x} L_{y/u})^{\frac{k}{2}}(x),\text{ if k is even,}  \\
        (L_{x} L_{y/u})^{\frac{k+1}{2}}(y/u),\text{ if k is odd.}
    \end{array}\right.
\end{equation}
\end{lemma}
\begin{proof}
Fix $x$, $y\in X$. Let $\varphi = L_xL_{y/u}$ and define $f_k$, $f'_k$ by $f^k(x,y) = (f_k,f'_k)$. Then $(f_{k+1},f'_{k+1}) = f(f_k,f'_k) = (f_k\cdot f_k/u,f_ku)$, so $f'_{k+1}=f_ku$ and $f_{k+1} = f_k\cdot f'_k/u = f_kf_{k-1}$ for every $k$. Note that we have $\varphi^{k+1}(y/u) = \varphi^k(\varphi(y/u)) = \varphi^k(x(y/u\cdot y/u)) = \varphi^k(x\cdot y/u)$.

For the base step, we will show that \eqref{fks} holds for $k\in\{0,1\}$. Indeed, $f_0 = x = \varphi^0(x)$ and $f_1 = x\cdot y/u = x(y/u\cdot y/u) = \varphi(y/u)$.

For the ascending induction, suppose that \eqref{fks} holds for $k-1$ and $k$. If $k$ is even, we have
\begin{displaymath}
    f_{k+1} = f_kf_{k-1} = \varphi^{k/2}(x)\varphi^{k/2}(y/u) = \varphi^{k/2}(x\cdot y/u) = \varphi^{(k+2)/2}(y/u).
\end{displaymath}
If $k$ is odd, we have
\begin{align*}
    f_{k+1} &= f_kf_{k-1} = \varphi^{(k+1)/2}(y/u)\varphi^{(k-1)/2}(x) = \varphi^{(k-1)/2}(x\cdot y/u)\varphi^{(k-1)/2}(x)\\
    &= \varphi^{(k-1)/2}((x\cdot y/u)x) = \varphi^{(k-1)/2}(x(y/u\cdot x)) = \varphi^{(k+1)/2}(x),
\end{align*}
where we have used flexibility.

For the descending induction, suppose that \eqref{fks} holds for $k$ and $k+1$. If $k$ is even then
\begin{displaymath}
    f_{k-1} = f_k\ldiv f_{k+1} = \varphi^{k/2}(x)\ldiv \varphi^{k/2}(x\cdot y/u) = \varphi^{k/2}(x\ldiv (x\cdot y/u)) = \varphi^{k/2}(y/u).
\end{displaymath}
If $k$ is odd then
\begin{align*}
    f_{k-1} &= f_k\ldiv f_{k+1} = \varphi^{(k+1)/2}(y/u)\ldiv \varphi^{(k+1)/2}(x)\\
     &= \varphi^{(k+1)/2}((y/u)\ldiv x) = \varphi^{(k-1)/2}\varphi((y/u)\ldiv x) = \varphi^{(k-1)/2}(x),
\end{align*}
finishing the proof.
\end{proof}

\begin{proposition}\label{Prop:orbit sizes of f}
Let $\X$ be a latin quandle and $x$, $y\in X$. Then, using the notation of Lemma \emph{\ref{f orbits}}, the following conditions are equivalent: $f^k(x,y)=(x,y)$, $f_k(x,y)=x$, $f_{k-1}(x,y)=y/u$. In particular,
\begin{enumerate}
\item[(i)] $|O_f(x,y)|=1$ if and only if $y=xu$,
\item[(ii)] $|O_f(x,y)|\ne 2$,
\item[(iii)] $|O_f(x,y)|\le |X|$.
\end{enumerate}
\end{proposition}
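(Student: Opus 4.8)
The plan is to read the three stated conditions directly off the shape $f^k(x,y)=(f_k(x,y),f_{k-1}(x,y)u)$ furnished by Lemma \ref{f orbits}, and then to collapse them using the single invariant $p\circ f=p$ noted just before that lemma. Indeed, $f^k(x,y)=(x,y)$ says precisely that $f_k(x,y)=x$ and $f_{k-1}(x,y)u=y$; since $R_u$ is a bijection in a latin quandle, the second equation is equivalent to $f_{k-1}(x,y)=y/u$. So $f^k(x,y)=(x,y)$ is literally the conjunction of the other two conditions, and it remains only to show that the single conditions $f_k(x,y)=x$ and $f_{k-1}(x,y)=y/u$ imply one another.

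For this I would exploit that $p$ is constant on each $f$-orbit, so that $f_k(x,y)\cdot(f_{k-1}(x,y)u)=p(f^k(x,y))=p(x,y)=xy$ for every $k$. This identity links $f_k$ and $f_{k-1}$ in a single variable. If $f_k(x,y)=x$, then cancelling $x$ on the left (applying $L_x^{-1}$) yields $f_{k-1}(x,y)u=y$, i.e.\ $f_{k-1}(x,y)=y/u$. Conversely, if $f_{k-1}(x,y)=y/u$, the identity becomes $f_k(x,y)\cdot y=xy$, and right cancellation (the latin property, $R_y\in\sym{X}$) gives $f_k(x,y)=x$. This establishes the main equivalence of the three conditions.

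For the particular claims I would simply specialize. For (i), apply the equivalence with $k=1$: since $f_0(x,y)=x$, the condition $f_{k-1}(x,y)=y/u$ reads $x=y/u$, i.e.\ $y=xu$, so $|O_f(x,y)|=1$ if and only if $y=xu$. For (ii), suppose $|O_f(x,y)|=2$; then $f^2(x,y)=(x,y)$, which by the equivalence with $k=2$ forces $f_1(x,y)=y/u$. As $f_1(x,y)=x\cdot y/u$, this gives $x\cdot(y/u)=y/u=(y/u)\cdot(y/u)$ by idempotence, whence right cancellation yields $x=y/u$, that is $y=xu$; but by (i) this means $|O_f(x,y)|=1$, a contradiction. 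Finally, (iii) is immediate: since $p\circ f=p$, the entire orbit $O_f(x,y)$ lies in the single fiber $p^{-1}(xy)$, which has cardinality $|X|$, so $|O_f(x,y)|\le|X|$.

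The only genuine idea here is recognizing that the product-map invariant $f_k(x,y)\cdot(f_{k-1}(x,y)u)=xy$ reduces the two-term recursion $f_{k+1}=f_kf_{k-1}$ to a one-variable relation between consecutive coordinates, so that no use of the explicit $\varphi$-formula of Lemma \ref{f orbits} is needed; everything else is cancellation in the quasigroup together with a choice of $k$. I expect the main obstacle to be purely bookkeeping: keeping the indexing convention $f^k(x,y)=(f_k(x,y),f_{k-1}(x,y)u)$ straight so that (i) and (ii) invoke the equivalence at the correct value of $k$.
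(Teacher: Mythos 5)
Your proposal is correct and follows essentially the same route as the paper: both derive the three-way equivalence from the invariance $p\circ f=p$ (i.e.\ $f_k\cdot(f_{k-1}u)=xy$) together with cancellation, and both obtain (i) from $k=1$, (ii) by reducing $f^2(x,y)=(x,y)$ to $y=xu$ and invoking (i), and (iii) from the fact that each fiber of $p$ has cardinality $|X|$ and is a union of $f$-orbits. The only cosmetic difference is in (ii), where you cancel from $f_1=y/u$ while the paper cancels from $f_2=x$ (using flexibility); both are immediate.
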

\begin{proof}
Fix $x$, $y\in X$ and let $f_k=f_k(x,y)$. Clearly, $f^k(x,y)=(x,y)$ holds if and only if both $f_k=x$ and $f_{k-1}=y/u$ hold. Since $xy = p(x,y) = p(f^k(x,y)) = p(f_k,f_{k-1}u) =  f_k\cdot f_{k-1}u$, we have $f_k=x$ if and only if $f_{k-1}u=y$.

Part (i) now follows. Suppose that $f^2(x,y) = (x,y)$. The equality $x=f_2$ says $x = x\cdot (y/u\cdot x)$, which implies $x=x\cdot y/u$, $x=y/u$, $y=xu$. But then $|O_f(x,y)|=1$ by (i). Finally, (iii) follows from the above-mentioned fact that every fiber of $p$ has cardinality $|X|$ and is a union of $f$-orbits.
\end{proof}

\begin{proposition}\label{Prop:orbit sizes of g}
Let $\X$ be a finite latin quandle and $x$, $y\in X$. Then $|O_g(x,y)| = \lcm(|O_{L_u}(x)|,|O_{L_u}(y)|)$. Moreover, $|O_g(x,y)|=1$ if and only if $(x,y)=(u,u)$.
\end{proposition}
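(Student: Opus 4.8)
The plan is to exploit the fact, already recorded just below \eqref{Eq:fgh}, that $g=L_u\times L_u$ acts on $X\times X$ coordinatewise. First I would write $g^k(x,y)=(L_u^k(x),L_u^k(y))$ for every $k\in\mathbb Z$, so that the orbit $O_g(x,y)$ is just the two $L_u$-orbits of $x$ and $y$ traced in lockstep. Since $X$ is finite, $|O_g(x,y)|$ equals the least positive integer $k$ with $g^k(x,y)=(x,y)$, and this holds precisely when $L_u^k$ fixes both $x$ and $y$, i.e.\ when $k$ is a common multiple of $|O_{L_u}(x)|$ and $|O_{L_u}(y)|$. The smallest such $k$ is $\lcm(|O_{L_u}(x)|,|O_{L_u}(y)|)$, which gives the first claim. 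This is simply the standard computation of the orbit length of a diagonal cyclic action and needs no quandle-specific input beyond $g=L_u\times L_u$.

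For the ``moreover'' statement I would observe that $|O_g(x,y)|=1$ is equivalent to $\lcm(|O_{L_u}(x)|,|O_{L_u}(y)|)=1$, which happens if and only if both $x$ and $y$ are fixed points of $L_u$. It then remains to identify the fixed points of $L_u$. By idempotence \eqref{ID} we have $L_u(u)=uu=u$, so $u$ is fixed. Conversely, if $L_u(x)=x$, that is $ux=x$, then combining with $xx=x$ yields $ux=xx$, hence $R_x(u)=R_x(x)$; since $X$ is latin, $R_x\in\sym{X}$ is a bijection and therefore $u=x$. Thus $u$ is the unique fixed point of $L_u$, so $|O_g(x,y)|=1$ forces $(x,y)=(u,u)$, and the converse is immediate.

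The only place where the hypotheses genuinely enter is the uniqueness of the fixed point of $L_u$, where latinity (right cancellativity) is essential; over a general quandle $L_u$ could well fix many points and the final assertion would fail. I expect no real obstacle here: the lcm formula is elementary and the uniqueness argument is a one-line cancellation, so the whole proof is short.
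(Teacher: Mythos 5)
Your proof is correct and follows the same route as the paper, which simply notes that $g^k(x,y) = (L_u^k(x),L_u^k(y))$ and declares the rest immediate. Your only addition is to spell out the detail the paper leaves implicit, namely that latinity (injectivity of $R_x$) forces $u$ to be the unique fixed point of $L_u$, which is exactly the right justification for the ``moreover'' clause.
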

\begin{proof}
This follows immediately from $g^k(x,y) = (L_u^k(x),L_u^k(y))$.
\end{proof}

\begin{lemma}\label{Lm:fixed pts of h}
Let $\X$ be a latin quandle and $x$, $y\in X$. Then the following conditions are equivalent:
\begin{enumerate}
\item[(i)] $|O_h(x,y)|=1$,
\item[(ii)] $p(h(x,y)) = p(x,y)$,
\item[(iii)] $y=u$.
\end{enumerate}
\end{lemma}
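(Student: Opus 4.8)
The plan is to reduce all three conditions to a single statement about the first coordinate of $h(x,y)$. Observe first that $h$ fixes the second coordinate, so $h(x,y)=(w,y)$ with $w=y/(x\ldiv u)\cdot x$. Since $h\in\sym{X\times X}$ by Proposition \ref{Prop:invariance under left action}, the orbit of $(x,y)$ under $\langle h\rangle$ has size one precisely when $(x,y)$ is a fixed point of $h$; as the second coordinate is always $y$, this happens if and only if $w=x$. Thus condition (i) is equivalent to $w=x$.

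Next I would show that (ii) is also equivalent to $w=x$, and here the latin hypothesis does all the work. We have $p(h(x,y))=w\cdot y$ while $p(x,y)=x\cdot y$, and since $R_y\in\sym{X}$ we may cancel $y$ on the right to conclude that $w\cdot y=x\cdot y$ holds if and only if $w=x$. This establishes (i) $\Leftrightarrow$ (ii), both amounting to the identity $y/(x\ldiv u)\cdot x=x$.

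Finally I would identify when $w=x$. Writing $a=x\ldiv u$, so that $x\cdot a=u$, the equation $w=x$ reads $(y/a)\cdot x=x=x\cdot x$, where the second equality is idempotence. Cancelling $x$ on the right, using $R_x\in\sym{X}$ once more, gives $y/a=x$, which is equivalent to $y=x\cdot a=x\cdot(x\ldiv u)=u$. This is exactly (iii), completing the cycle of equivalences.

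There is no genuine obstacle in this argument: it is a short chain of cancellations in the quasigroup $(X,\cdot,\ldiv,/)$. The one point worth isolating is the repeated use of right-cancellativity (the bijectivity of $R_y$ and of $R_x$), which is precisely where the latin assumption enters and which collapses both the orbit condition (i) and the product condition (ii) to the same algebraic identity $y/(x\ldiv u)\cdot x=x$.
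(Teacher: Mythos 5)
Your proof is correct and follows essentially the same route as the paper: both reduce (i) and (ii) to the identity $y/(x\ldiv u)\cdot x=x$ and then use idempotence together with right-cancellativity to conclude $y=u$. The paper's proof is just a terser version of your argument, leaving the cancellation steps implicit.
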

\begin{proof}
Each of (i), (ii) is equivalent to $y/(x\ldiv u)\cdot x = x$, which is equivalent to $y/(x\ldiv u) = x$, that is, to $y=u$.
\end{proof}

The action of $\langle f,h\rangle$ on $X\times X$ in fact induces an action on $\mathcal O_g$, the orbits of $g$:

\begin{proposition}\label{Pr:InducedAction}
Let $\X$ be a latin quandle. Then
\begin{displaymath}
     k(O_g(x,y)) = O_g(k(x,y))
\end{displaymath}
for every $k\in \langle f, h\rangle$.
\end{proposition}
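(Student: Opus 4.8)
The plan is to reduce the statement to a single commutation relation: it suffices to show that each of $f$ and $h$ commutes with $g$ as a permutation of $X\times X$. Indeed, if $kg=gk$ for a bijection $k$ of $X\times X$, then $kg^n=g^nk$ for every $n\in\mathbb Z$, and hence
\[
    k(O_g(x,y)) = \setof{k(g^n(x,y))}{n\in\mathbb Z} = \setof{g^n(k(x,y))}{n\in\mathbb Z} = O_g(k(x,y)).
\]
Since the set of bijections commuting with $g$ is a subgroup of $\sym{X\times X}$, it contains all of $\langle f,h\rangle$ as soon as it contains $f$ and $h$, and then the displayed identity holds for every $k\in\langle f,h\rangle$.

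The key observation is that $g=L_u\times L_u$, where $L_u\in\aut X$ satisfies $L_u(u)=u$ by idempotence \eqref{ID}. Both $f$ and $h$ are defined purely in terms of the quasigroup operations $\cdot$, $/$, $\ldiv$ of the latin quandle together with the constant $u$ (see \eqref{Eq:fgh}). Consequently, for \emph{any} automorphism $\varphi$ of $(X,\cdot,\ldiv,/)$ fixing $u$, the diagonal map $\varphi\times\varphi$ commutes with $f$ and with $h$: one simply pushes $\varphi$ through each defining term, using that $\varphi$ preserves all three operations and that $\varphi(u)=u$. Applying this with $\varphi=L_u$ yields $gf=fg$ and $gh=hg$, which is exactly what the reduction above requires.

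For concreteness I would record the two relations by direct computation. For $f$, using that $L_u$ is an automorphism (so $u(a\cdot b)=ua\cdot ub$ and $u(a/b)=ua/ub$) together with $L_u(u)=u$:
\[
    g(f(x,y)) = (u(x\cdot y/u),\,u(xu)) = ((ux)(uy/u),\,(ux)u) = f(ux,uy) = f(g(x,y)).
\]
For $h$, using in addition that $L_u$ preserves left division ($u(a\ldiv b)=ua\ldiv ub$):
\[
    g(h(x,y)) = (u(y/(x\ldiv u)\cdot x),\,uy) = ((uy)/((ux)\ldiv u)\cdot(ux),\,uy) = h(ux,uy) = h(g(x,y)).
\]

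Both reductions are routine once the automorphism property is invoked, and this is the only computational content of the proof; the passage from commutation to the orbit statement is purely formal. The mild point requiring care is to confirm that $L_u$ genuinely preserves $/$ and $\ldiv$, which holds because an automorphism of the latin quandle $(X,\cdot)$ is automatically an automorphism of the associated quasigroup $(X,\cdot,\ldiv,/)$, as noted earlier in the preliminaries. I do not expect any real obstacle here: the whole statement is an instance of the elementary principle that a diagonal automorphism fixing the parameter $u$ commutes with every map built from the quasigroup signature and $u$.
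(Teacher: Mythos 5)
Your proposal is correct and follows essentially the same route as the paper: reduce to the commutation relations $fg=gf$ and $hg=gh$, then verify both by pushing the automorphism $L_u$ (which fixes $u$ and preserves $\cdot$, $/$, $\ldiv$) through the defining formulas of $f$ and $h$. The paper's proof is exactly these two displayed computations, with the reduction and the quasigroup-homomorphism facts left implicit; your version merely makes those standard steps explicit.
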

\begin{proof}
It suffices to show that $f$ and $h$ commute with $g$. Let $x$, $y\in X$. Since $L_u\in\aut{X}$, we have
\begin{displaymath}
    f(g(x,y)) = f(ux,uy) = (ux\cdot (uy)/u ,uxu) = ( u\cdot (x\cdot (y/u)),u\cdot xu) = g(f(x,y))
\end{displaymath}
and
\begin{displaymath}
    h(g(x,y)){=}h(ux,uy){=}(uy/(ux\ldiv u)\cdot ux, uy){=}(u\cdot (y/(x\ldiv u)\cdot x),u\cdot y){=}g(h(x,y)).
\end{displaymath}
\end{proof}

\begin{remark}
The mappings $f$ and $h$ never commute on a nontrivial latin quandle. More precisely, $fh(x,y) = hf(x,y)$ if and only if $x=y=u$. Indeed, we certainly have $f(u,u) = (u,u) = h(u,u)$, so $fh(u,u) = hf(u,u)$. Suppose now that
\begin{displaymath}
    fh(x,y) =f(y/(x\ldiv u)\cdot x,y) = ((y/(x\ldiv u)\cdot x)(y/u),(y/(x\ldiv u)\cdot x)u)
\end{displaymath}
is equal to
\begin{displaymath}
    hf(x,y) = h(x\cdot y/u,xu) = (xu/((x\cdot y\ldiv u)\ldiv u)\cdot x(y\ldiv u),xu).
\end{displaymath}
By comparing the second coordinates we see that $y=u$, and substituting this into the first coordinates we arrive at $xu = xu/(xu\ldiv u)\cdot xu$, that is, $x=u$.
\end{remark}

\begin{lemma}\label{Lm:some orbits of g}
Let $X$ be a latin quandle and $x\in X$. Then
\begin{itemize}
\item[(i)] $O_g(x,xu)=\setof{(y,yu)}{y\in O_{L_u}(x)}$,
\item[(ii)] $O_g(x,x\ldiv u)=\setof{(y,y \ldiv u)}{y\in O_{L_u}(x)}$.
\end{itemize}
In particular, $O_g(x,y)\in \mathcal O_g^u\cap \mathcal O_g^f$ if and only if $y=xu$ and $x\cdot xu=u$.
\end{lemma}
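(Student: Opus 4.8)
The plan is to read off the $g$-orbits from the formula $g^k(x,y) = (L_u^k(x),L_u^k(y))$ recorded in the proof of Proposition \ref{Prop:orbit sizes of g}, and then to exploit two structural facts about $L_u$: it is a quandle automorphism (by \eqref{LQG} and \eqref{LD}, as noted after the quandle axioms), and it fixes $u$, since $L_u(u)=uu=u$ by idempotency \eqref{ID}. Both parts (i) and (ii) will then be one-line computations.

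For part (i), since $L_u^k\in\aut{X}$ respects the multiplication, I would write $L_u^k(xu)=L_u^k(x)\cdot L_u^k(u)=L_u^k(x)\cdot u$, so that $g^k(x,xu)=(L_u^k(x),L_u^k(x)\cdot u)$. Letting $y=L_u^k(x)$ range over $O_{L_u}(x)$ as $k$ varies yields precisely $\setof{(y,yu)}{y\in O_{L_u}(x)}$. For part (ii), I would first note that any automorphism $\varphi$ respects left division, $\varphi(a\ldiv b)=\varphi(a)\ldiv\varphi(b)$ (apply $\varphi$ to $a\cdot(a\ldiv b)=b$), whence $L_u^k(x\ldiv u)=L_u^k(x)\ldiv L_u^k(u)=L_u^k(x)\ldiv u$; the same substitution $y=L_u^k(x)$ then gives $\setof{(y,y\ldiv u)}{y\in O_{L_u}(x)}$.

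For the final clause I would argue as follows. By parts (i) and (ii) the families $\mathcal O_g^f$ and $\mathcal O_g^u$ are unions of entire $g$-orbits whose underlying sets are $\setof{(z,zu)}{z\ne u}$ and $\setof{(z,z\ldiv u)}{z\ne u}$ respectively (the constraint $z\ne u$ is preserved because $L_u$ is injective and fixes $u$). Consequently an orbit $O_g(x,y)$ belongs to $\mathcal O_g^f$ iff it meets that set, i.e.\ iff some power of $g$ carries $(x,y)$ to a pair of the form $(z,zu)$ with $z\ne u$; by injectivity of $L_u^k$ this happens iff $y=xu$ (and $x\ne u$), and likewise $O_g(x,y)\in\mathcal O_g^u$ iff $y=x\ldiv u$ (and $x\ne u$). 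Membership in the intersection therefore forces $y=xu$ and $y=x\ldiv u$ at once; since $y=x\ldiv u$ is equivalent to $xy=u$, substituting $y=xu$ gives $x\cdot xu=u$, and conversely $y=xu$ together with $x\cdot xu=u$ returns $xy=u$, i.e.\ $y=x\ldiv u$.

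I expect the computations in (i)--(ii) to be immediate once the automorphism property and $L_u(u)=u$ are invoked; the only step needing care is the translation, in the last clause, between ``the orbit $O_g(x,y)$ lies in the family $\mathcal O_g^f$ (resp.\ $\mathcal O_g^u$)'' and ``the representative $(x,y)$ has the requisite shape''. This is legitimate precisely because each family is a union of full $g$-orbits, so an orbit belongs to it iff it meets it, and meeting it can be checked on any single representative after applying a suitable power of $g$. I would also flag the degenerate point $(u,u)$: its singleton orbit $O_g(u,u)$ is excluded from both families by the standing restriction $x\ne u$, even though $(u,u)$ formally satisfies $y=xu$ and $x\cdot xu=u$, so the characterization is to be read under the hypothesis $x\ne u$.
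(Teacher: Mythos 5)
Your proof is correct and follows essentially the same route as the paper's: both arguments rest on $L_u$ being a quandle automorphism fixing $u$, so that $g^k$ carries $(x,xu)$ to $(L_u^k(x),L_u^k(x)u)$ and $(x,x\ldiv u)$ to $(L_u^k(x),L_u^k(x)\ldiv u)$, and the final clause follows by matching the two shapes on the representative $(x,y)$ (the paper phrases parts (i)--(ii) as a one-step computation plus induction rather than via the power formula, an inessential difference). Your remark that $(u,u)$ formally satisfies $y=xu$ and $x\cdot xu=u$ yet its orbit is excluded from both families by the restriction $x\ne u$ is a legitimate observation about an edge case the paper's statement glosses over.
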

\begin{proof}
We have $g(x,xu) =  (ux,uxu)$, $g^{-1}(x,xu) = (u\ldiv x, u\ldiv (xu)) = (u\ldiv x, (u\ldiv x)u)$, and (i) follows by induction. Similarly, $g(x,x\ldiv u) = (ux,u\cdot x\ldiv u) = (ux,(ux)\ldiv u)$ and $g^{-1}(x,x\ldiv u) = (u\ldiv x,u\ldiv (x\ldiv u) = (u\ldiv x,(u\ldiv x)\ldiv u)$ prove (ii). Hence $O_g(x,y)\in \mathcal O_g^u\cap \mathcal O_g^f$ if and only if $(x,y)=(z,zu)=(w,w\ldiv u)$ for some $z$, $w\in X$, which is equivalent to $z=w=x$, $y=xu=x\ldiv u$.
\end{proof}

\begin{corollary}\label{action of omega on diag}
Let $X$ be a latin quandle. Then
\begin{enumerate}
\item[(i)] $h(\mathcal O_g^u)\cap \mathcal O_g^u = \emptyset$,
\item[(ii)] $h(\mathcal O_g^f)\cap \mathcal O_g^f = \emptyset$,
\item[(iii)] $f(O_g(x,xu))=O_g(x,xu)$ for every $x\in X$, in particular, $f(\mathcal O_g^f) = \mathcal O_g^f$,
\item[(iv)] $f(\mathcal O_g^u)= \mathcal O_g^u$.
\end{enumerate}
\end{corollary}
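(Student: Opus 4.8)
The plan is to handle the four parts separately, using throughout the induced action of Proposition \ref{Pr:InducedAction}, which gives $k(O_g(x,y)) = O_g(k(x,y))$ for $k\in\langle f,h\rangle$, together with the two descriptions extracted earlier: $\bigcup\mathcal O_g^u = p^{-1}(u)\setminus\{(u,u)\}$ and $\{(u,u)\}\cup\bigcup\mathcal O_g^f = \mathrm{Fix}(f)$. The two disjointness claims (i) and (ii) follow the same template. In each case the members of $\mathcal O_g^u$, respectively $\mathcal O_g^f$, are exactly the $g$-orbits contained in a $g$-invariant set $T$, namely $T = p^{-1}(u)$ (which is $g$-invariant since $p\circ g = L_u\circ p$ and $L_u(u)=u$) and $T = \mathrm{Fix}(f)$ (which is $g$-invariant since $f$ and $g$ commute, as shown in the proof of Proposition \ref{Pr:InducedAction}). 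It therefore suffices to show that $h$ moves the natural representative of each such orbit off $T$; then $h(O_g(\,\cdot\,)) = O_g(h(\,\cdot\,))$ is a $g$-orbit disjoint from $T$, hence not a member of the collection in question.

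For (i) I would take the representative $(x,x\ldiv u)$ with $x\ne u$. Here $p(x,x\ldiv u) = x\cdot(x\ldiv u) = u$, and $x\ne u$ gives $x\ldiv u\ne u$, so Lemma \ref{Lm:fixed pts of h} yields $p(h(x,x\ldiv u))\ne p(x,x\ldiv u) = u$. Thus $h(x,x\ldiv u)\notin p^{-1}(u)$, and the template concludes (i).

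For (ii) I would compute $h(x,xu) = (xu/(x\ldiv u)\cdot x,\,xu)$ for $x\ne u$ and test membership in $\mathrm{Fix}(f)$. A point $(a,b)$ is fixed by $f$ precisely when $b = au$, by Proposition \ref{Prop:orbit sizes of f}(i); so $h(x,xu)$ would be an $f$-fixed point only if $xu = au$ with $a = xu/(x\ldiv u)\cdot x$. Injectivity of $R_u$ forces $a = x$, and cancelling right translations in $xu/(x\ldiv u)\cdot x = x$ (first $R_x$, then $R_{x\ldiv u}$, then $R_u$) reduces this successively to $xu/(x\ldiv u) = x$, to $xu = u$, and finally to $x = u$. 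Hence for $x\ne u$ the point $h(x,xu)$ lies outside $\mathrm{Fix}(f)$, and the template concludes (ii).

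Parts (iii) and (iv) are quick. For (iii), $(x,xu)$ is a fixed point of $f$ by Proposition \ref{Prop:orbit sizes of f}(i), so $f(O_g(x,xu)) = O_g(f(x,xu)) = O_g(x,xu)$; restricting to $x\ne u$ gives $f(\mathcal O_g^f) = \mathcal O_g^f$. For (iv), $p\circ f = p$ holds for both $f$ and $f^{-1}$, so $f$ restricts to a bijection of $p^{-1}(u)$ that fixes $(u,u)$, hence to a bijection of $\bigcup\mathcal O_g^u$; since $f$ carries $g$-orbits to $g$-orbits, it permutes $\mathcal O_g^u$, i.e., $f(\mathcal O_g^u) = \mathcal O_g^u$. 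The only genuine labor is the quasigroup bookkeeping in (ii), which I expect to be the main obstacle, though it is entirely routine: one must track $h(x,xu)$ carefully and apply right-division cancellation three times to obtain the reduction $xu/(x\ldiv u)\cdot x = x \Leftrightarrow x = u$.
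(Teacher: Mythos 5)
Your proposal is correct and follows essentially the same route as the paper: both arguments act on orbit representatives via the induced action of Proposition \ref{Pr:InducedAction} and show that $h$ moves the representative out of the characteristic set containing $\bigcup\mathcal O_g^u$ (resp.\ $\bigcup\mathcal O_g^f$), while (iii) and (iv) are proved identically. The only cosmetic difference is in how escape is detected: the paper matches the explicit form $(y,y\ldiv u)$ or $(y,yu)$ from Lemma \ref{Lm:some orbits of g} and concludes via the $h$-fixed-point criterion of Lemma \ref{Lm:fixed pts of h}, whereas you use the product criterion $p(h(x,y))\ne p(x,y)$ for (i) and a direct right-division computation against $\mathrm{Fix}(f)$ for (ii) --- both reductions are valid and rest on the same lemmas.
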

\begin{proof}
For (i), suppose that $h(O_g(x,x\ldiv u))\in \mathcal O_g^u$ for some $x\ne u$. By Lemma \ref{Lm:some orbits of g}, $h(x,x\ldiv u)=(y,y\ldiv u)$ for some $y\in X$. But then $x=y$, $h(x,x\ldiv u)=(x,x\ldiv u)$, and thus $x=u$ by Lemma \ref{Lm:fixed pts of h}, a contradiction.

The proof of (ii) is similar: If $h(O_g(x,xu))\in \mathcal O_g^f$ for some $x\ne u$, then $h(x,xu)=(y,yu)$ for some $y$ by Lemma \ref{Lm:some orbits of g}, hence $x=y$, $h(x,xu)=(x,xu)$, $xu=u$, $x=u$, a contradiction.

For (iii), recall that $f(x,xu) = (x,xu)$ by Proposition \ref{Prop:orbit sizes of f}.

Finally, for (iv), note that $p(f(x,x\ldiv u))=p(x,x\ldiv u)=u$, hence $f(x,x\ldiv u)$ must be equal to some $(y,y\ldiv u)$. Moreover, when $x\ne u$ then $y\ne u$ because $f$ fixes $(u,u)$.
\end{proof}

A permutation is said to be \emph{semiregular} if all its nontrivial cycles are of the same finite length. A quandle $X$ is \emph{semiregular} if there is a positive integer $s$ such that every nontrivial cycle of any left translation $L_x$ has length $s$.

Clearly, if $X$ is a connected quandle and $L_u$ is semiregular for some $u\in X$, then $X$ is semiregular. In particular, a latin quandle is semiregular if and only if $L_u$ is semiregular for some $u\in X$.

Let us denote a typical orbit of the induced action of $f$ on $\mathcal O_g$ by $O_f(O_g(x,y))$, cf. Proposition \ref{Pr:InducedAction}. Then $|O_f(O_g(x,y))|\le |O_f(x,y)|$ and the strict inequality can occur in examples.

\begin{lemma}\label{F on Delta}
Let $X$ be a latin semiregular quandle. Then $|O_f(O_g(x,y))| = |O_f(x,y)|$ and $|O_h(O_g(x,y))| = |O_h(x,y)|$ for every $x$, $y\in X$. Hence $f(O_g(x,y))=O_g(x,y)$ if and only if $O_g(x,y)\in \mathcal O_g^f$.
\end{lemma}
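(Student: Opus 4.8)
The plan is to treat $f$ and $h$ in parallel, exploiting that both commute with $g$ (Proposition \ref{Pr:InducedAction}), so that $k^i(O_g(x,y)) = O_g(k^i(x,y))$ for $k \in \{f,h\}$. Write $n = |O_k(x,y)|$ and $m = |O_k(O_g(x,y))|$. Since $k^n(x,y)=(x,y)$ fixes the $g$-orbit, $m$ divides $n$, so it suffices to prove the reverse divisibility, and for that I will show: if $k^m(x,y)=g^j(x,y)$ for some $j$, then already $g^j(x,y)=(x,y)$. Two facts from semiregularity drive everything. First, $L_u^s=\mathrm{id}$, so $g^s=\mathrm{id}$ and, more usefully, $L_u^j=\mathrm{id}$ as soon as $s\mid j$. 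Second, in a latin quandle $u$ is the \emph{only} fixed point of $L_u$: from $uw=w=ww$ and injectivity of $R_w$ we get $w=u$. Hence $|O_{L_u}(w)|=s$ for every $w\neq u$, and by Proposition \ref{Prop:orbit sizes of g} we have $|O_g(x,y)|=s$ for all $(x,y)\neq(u,u)$.

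The cases where a single projection is $k$-invariant and $L_u$ acts on its image without fixed points settle immediately. For $h$, the second projection $q(x,y)=y$ is $h$-invariant while $q(g^j(x,y))=L_u^j(y)$; thus $h^m(x,y)=g^j(x,y)$ forces $L_u^j(y)=y$. If $y\neq u$ then $s\mid j$ and $g^j=\mathrm{id}$, as desired; if $y=u$ then $(x,y)$ is a fixed point of $h$ by Lemma \ref{Lm:fixed pts of h}, so $n=1=m$. This disposes of the $h$-statement entirely. For $f$, the analogous role is played by the product map $p$, which is $f$-invariant with $p(g^j(x,y))=L_u^j(xy)$; so $f^m(x,y)=g^j(x,y)$ forces $L_u^j(xy)=xy$, and whenever $xy\neq u$ we again get $s\mid j$ and $g^j=\mathrm{id}$.

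The one remaining case, which I expect to be the main obstacle, is the $f$-statement on the fiber $p^{-1}(u)=\{(u,u)\}\cup\bigcup\mathcal O_g^u$, where $p$ carries no information because $L_u$ fixes $u$. After discarding the point $(u,u)$, I may take $(x,y)=(x,x\ldiv u)$ with $x\neq u$; by Lemma \ref{Lm:some orbits of g}(ii) the relation $f^m(x,y)=g^j(x,y)$ reads $f^m(x,x\ldiv u)=(L_u^j x,(L_u^j x)\ldiv u)$, so everything reduces to proving $L_u^j x=x$, i.e.\ $s\mid j$. To replace the now-useless $p$ I would use the finer $f$-invariant $\Phi(x,y)=L_xL_{y/u}$ (the automorphism $\varphi$ of Lemma \ref{f orbits}, whose constancy along $f$-orbits is exactly the identity $L_{x\cdot y/u}L_x=L_xL_{y/u}$ coming from \eqref{CONJ}); using $L_u(y/u)=(uy)/u$ one checks that $g$ conjugates it, $\Phi(g(x,y))=L_u\,\Phi(x,y)\,L_u^{-1}$. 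Applying $\Phi$ to $f^m(x,y)=g^j(x,y)$ and invoking $f$-invariance then yields that $L_u^j$ centralizes $L_xL_{y/u}$ in $\lmlt{X}$.

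The delicate point will be to convert this centralizing condition into $s\mid j$ using semiregularity, i.e.\ to rule out that on the fiber over $u$ the first-return index $m$ of $f$ to the $g$-orbit is strictly smaller than the period $n$. I would push $\Phi(f^m(x,y))=\Phi(x,y)$ through \eqref{CONJ} to deduce that $\theta:=L_u^{-j}\varphi^{\,m/2}$ (for $m$ even, and the analogue for $m$ odd) fixes both $x$ and $y/u$ while commuting with $L_x$ and $L_{y/u}$, hence fixes pointwise the subalgebra they generate, and then combine this with connectedness and $L_u^s=\mathrm{id}$ to force $L_u^j x=x$; making this last extraction airtight is the heart of the argument. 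Once the two orbit-size equalities are in hand, the concluding assertion is formal: $f(O_g(x,y))=O_g(x,y)$ means $|O_f(O_g(x,y))|=1$, which by the established equality is $|O_f(x,y)|=1$, equivalently $y=xu$ by Proposition \ref{Prop:orbit sizes of f}(i), i.e.\ $O_g(x,y)\in\mathcal O_g^f$.
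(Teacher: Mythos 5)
Your reduction is sound, and most of it coincides exactly with the paper's own proof: since $f$ and $h$ commute with $g$, one only needs to show that $k^m(x,y)=g^j(x,y)$ forces $g^j(x,y)=(x,y)$ for $k\in\{f,h\}$, and the two invariants you use are the ones the paper uses (the second projection for $h$, the product map $p$ for $f$), combined with semiregularity and the observation --- which you prove and the paper leaves implicit --- that in a latin quandle $u$ is the \emph{only} fixed point of $L_u$, so $|O_{L_u}(w)|=s$ for every $w\ne u$. Your treatment of the degenerate case $y=u$ for $h$ via Lemma \ref{Lm:fixed pts of h} is complete and, if anything, more careful than the paper's. So the $h$-statement, and the $f$-statement at every point with $xy\ne u$, are fully proved in your proposal.

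The case you defer --- $f$ on the fiber $p^{-1}(u)$ --- is, however, not a gap that can be closed: the statement is \emph{false} there, so neither your centralizer strategy nor any other argument can finish it. Take $X=\gal{\mathbb{Z}_5,\lambda_2}$ with $u=0$; this quandle is latin and semiregular with $s=4$, and $x\cdot y=-x+2y$, $f(x,y)=(-x+3y,-x)$, $g(x,y)=(2x,2y)$. The point $(2,1)=(2,2\ldiv 0)$ lies on the fiber over $0$, and $f(2,1)=(1,3)=g^3(2,1)$, so $f(O_g(2,1))=O_g(2,1)$ and $|O_f(O_g(2,1))|=1$, while $O_f(2,1)=\{(2,1),(1,3),(3,4),(4,2)\}$ has size $4$; the concluding assertion fails as well, since $O_g(2,1)$ is $f$-invariant yet belongs to $\mathcal O_g^u\setminus\mathcal O_g^f$. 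Note that the necessary condition your $\Phi$-argument extracts (that $L_u^j$ centralize $L_xL_{y/u}$) is satisfied in this example, since $w\mapsto 2^jw$ commutes with $L_2L_{1/0}\colon w\mapsto -w$; that is exactly why the ``delicate point'' you isolate cannot be made airtight. The paper's own proof contains the same hole: it infers ``$|L_u|$ divides $r$'' from $L_u^r(xy)=xy$, which is unjustified precisely when $xy=u$ (indeed, for every doubly transitive quandle of order at least $4$, the single $g$-orbit on $p^{-1}(0)\setminus\{(0,0)\}$ is $f$-invariant while $F>1$, so the failure is systematic). The correct statement restricts the $f$-equality to pairs with $xy\ne u$, and that restricted version is all the paper ever uses (in Theorem \ref{odd_2trans} the $f$-part is invoked only for points off the fiber, and the $h$-part is true everywhere); with that hypothesis added, your proof is complete and needs no further ideas.
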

\begin{proof}
Suppose that $f^k(O_g(x,y))=O_g(x,y)$ and $k>0$ is smallest possible. Then $f^k(x,y)=(L_u^r(x),L_u^r(y))$ for some $r\in\mathbb Z$. Therefore, $xy = p(x,y) = p(L_u^r(x),L_u^r(y)) = L_u^r(xy)$. But then semiregularity implies that $|L_u|$ divides $r$, $f^k(x,y) = (x,y)$, and $|O_f(O_g(x,y))|\ge |O_f(x,y)|$.

Similarly, suppose that $h^k(O_g(x,y))=O_g(x,y)$ and $k>0$ is smallest possible. Then $h^k(x,y)=(z,y)=(L_u^r(x),L_u^r(y))$ for some $r\in\mathbb Z$ and some $z\in X$. Then $L_u^r(y)=y$, hence $L_u^r(x)=x$ by semiregularity, $h^k(x,y)=(x,y)$, and $|O_h(O_g(x,y))|\ge |O_h(x,y)|$.
\end{proof}

\section{The orbits on connected affine quandles}\label{Sc:OrbitsOnAffine}

In this section we take advantage of the affine representation to arrive at explicit expressions for the mappings $f$ and $h$ in terms of the underlying group and the automorphism $\alpha$. Moreover, we compute the orbit lengths for $f$ and $h$.

We will use additive notation for the underlying groups and set $u=0$ in affine quandles. We therefore also write $\mathcal O_g^0$ for $\mathcal O_g^u$.

The results from previous sections apply to finite affine connected quandles thanks to the following, well-known result.

\begin{proposition}[{{\cite[Proposition 1]{Lit}}}]\label{connected iff latin}
Let $X=\gal{A,\alpha}$ be a finite affine quandle. Then the following conditions are equivalent:
\begin{enumerate}
\item[(i)] $X$ is latin,
\item[(ii)] $X$ is connected,
\item[(iii)] $1-\alpha\in \aut{A}$.
\end{enumerate}
\end{proposition}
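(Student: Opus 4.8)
The plan is to establish the cycle of implications $(iii)\Rightarrow(i)\Rightarrow(ii)\Rightarrow(iii)$, working throughout in additive notation for the abelian group $A$ with $u=0$. The starting point is to unwind the coset-quandle multiplication in this setting: for $x$, $y\in A$,
\begin{displaymath}
    xy = x+\alpha(-x+y) = (1-\alpha)(x)+\alpha(y),
\end{displaymath}
so that $L_x(y) = (1-\alpha)(x)+\alpha(y)$ and $R_y(x) = (1-\alpha)(x)+\alpha(y)$. In particular $L_0=\alpha$, and each $L_x$ is automatically a bijection (its linear part is the automorphism $\alpha$), which merely reconfirms that $\gal{A,\alpha}$ is a quandle.

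For $(iii)\Rightarrow(i)$: if $1-\alpha\in\aut A$, then every $R_y$, being the affine map $x\mapsto (1-\alpha)(x)+\alpha(y)$ with invertible linear part $1-\alpha$, is a bijection of $A$, so $R_y\in\sym{X}$ for all $y$ and $X$ is latin. The implication $(i)\Rightarrow(ii)$ requires no work, since it is recorded earlier in the paper that every latin quandle is connected.

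The substance of the proof is $(ii)\Rightarrow(iii)$, where I would analyze the orbit of $0$ under $\lmlt X$. Conjugating off $L_0=\alpha$, one computes $L_xL_0^{-1}(y)=(1-\alpha)(x)+y$, so $\lmlt X$ contains the translation by $(1-\alpha)(x)$ for every $x$; hence the orbit of $0$ contains $\mathrm{Im}(1-\alpha)$. The key step, and the main obstacle, is the reverse containment, namely that $\mathrm{Im}(1-\alpha)$ is itself $\lmlt X$-stable. This rests on the observation that $\mathrm{Im}(1-\alpha)$ is an $\alpha$-invariant subgroup: $\alpha$ commutes with $1-\alpha$, so $\alpha(\mathrm{Im}(1-\alpha))\subseteq\mathrm{Im}(1-\alpha)$, and therefore $L_x(z)=(1-\alpha)(x)+\alpha(z)$ lands in $\mathrm{Im}(1-\alpha)$ whenever $z$ does. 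Thus each $L_x$ maps $\mathrm{Im}(1-\alpha)$ into itself, hence (by finiteness) onto itself, so the orbit of $0$ is exactly $\mathrm{Im}(1-\alpha)$. Because every translation $t_a$ ($a\in A$) is an automorphism of $\gal{A,\alpha}$, the quandle is homogeneous and all $\lmlt X$-orbits have equal cardinality; therefore $X$ is connected exactly when the orbit of $0$ is all of $A$, that is, when $1-\alpha$ is surjective. Finiteness of $A$ then upgrades surjectivity of the endomorphism $1-\alpha$ to bijectivity, giving $1-\alpha\in\aut A$ and closing the cycle.
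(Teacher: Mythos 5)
Your proof is correct, but there is nothing in the paper to compare it against: the paper states this proposition as a known result, citing Litherland--Nelson \cite[Proposition 1]{Lit}, and gives no proof of its own. Your argument is the standard one for this fact and it is sound in every step. The cycle $(iii)\Rightarrow(i)\Rightarrow(ii)\Rightarrow(iii)$ is complete: the formula $xy=(1-\alpha)(x)+\alpha(y)$ makes $(iii)\Rightarrow(i)$ immediate since $R_y$ has linear part $1-\alpha$; the implication $(i)\Rightarrow(ii)$ is indeed recorded in the paper ($L_{x/y}(y)=x$); and your treatment of $(ii)\Rightarrow(iii)$ is the genuine content. There you correctly identify that $L_xL_0^{-1}$ is translation by $(1-\alpha)(x)$, so the $\lmlt{X}$-orbit of $0$ contains $\mathrm{Im}(1-\alpha)$, and the reverse containment follows because $\mathrm{Im}(1-\alpha)$ is an $\alpha$-invariant subgroup (here you use that $A$ is abelian, so $1-\alpha$ is an endomorphism, and that $\alpha$ commutes with $1-\alpha$), hence stable under each $L_x$ and, by finiteness and injectivity, under each $L_x^{-1}$ as well; thus the orbit of $0$ equals $\mathrm{Im}(1-\alpha)$, and connectedness forces $1-\alpha$ to be surjective, hence bijective by finiteness. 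The homogeneity remark (translations being quandle automorphisms) is true but not actually needed, since ``orbit of $0$ equals $A$'' is already equivalent to transitivity. The value of writing the argument out, as you did, is that the paper's reliance on an external citation is replaced by a short self-contained proof; it also makes visible exactly where finiteness enters (twice: onto-ness of $L_x$ restricted to $\mathrm{Im}(1-\alpha)$, and upgrading surjectivity of $1-\alpha$ to bijectivity), which is relevant since the equivalence fails for infinite $A$.
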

%
%

Note that Proposition \ref{connected iff latin} implies that in any finite connected affine quandle $X=\gal{A,\alpha}$ we have $L_0(y) = (1-\alpha)(0)+\alpha(y) = \alpha(y)$, that is, $L_0=\alpha$.

\begin{proposition}\label{iterative multiplication}
Let $A$ be an abelian group and let $\alpha\in\aut{A}$ be such that $1-\alpha \in \aut{A}$. Then for every $x\in A$ and every positive integer $n$ we have
\begin{displaymath}
    \alpha^n(x) =x\ \Leftrightarrow\ \sum_{k=0}^{n-1}\alpha^k(x) =0.
\end{displaymath}
\end{proposition}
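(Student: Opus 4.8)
The plan is to recognize the sum $\sum_{k=0}^{n-1}\alpha^k(x)$ as the image of $x$ under a geometric-series operator and to exploit the standard telescoping identity in the endomorphism ring $\mathrm{End}(A)$. Since $A$ is abelian, the identity map $1$ and all powers $\alpha^k$ lie in the commutative subring of $\mathrm{End}(A)$ generated by $\alpha$, so I may manipulate them formally like polynomials in $\alpha$.

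First I would establish the operator identity
\begin{displaymath}
    (1-\alpha)\sum_{k=0}^{n-1}\alpha^k = 1-\alpha^n,
\end{displaymath}
which follows immediately by telescoping, since $\sum_{k=0}^{n-1}\alpha^k - \sum_{k=1}^{n}\alpha^k = 1-\alpha^n$. Applying both sides to $x$ then gives
\begin{displaymath}
    (1-\alpha)\left(\sum_{k=0}^{n-1}\alpha^k(x)\right) = x - \alpha^n(x).
\end{displaymath}

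Next, I invoke the hypothesis $1-\alpha\in\aut{A}$, in particular that $1-\alpha$ is injective, so that $(1-\alpha)(v)=0$ holds if and only if $v=0$. Combining this with the displayed identity yields the chain of equivalences
\begin{displaymath}
    \sum_{k=0}^{n-1}\alpha^k(x)=0 \ \Leftrightarrow\ (1-\alpha)\left(\sum_{k=0}^{n-1}\alpha^k(x)\right)=0 \ \Leftrightarrow\ x-\alpha^n(x)=0 \ \Leftrightarrow\ \alpha^n(x)=x,
\end{displaymath}
which is exactly the assertion.

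There is essentially no obstacle here; the proof is a two-line computation once the telescoping identity is in hand. The only places where the hypotheses are genuinely used are that $A$ is abelian (so that $1-\alpha$, meaning $x\mapsto x-\alpha(x)$, is a group endomorphism and the powers of $\alpha$ commute, validating the geometric-series manipulation) and that $1-\alpha$ is bijective. In fact only the injectivity of $1-\alpha$ is needed, to pass between the vanishing of the sum and the vanishing of $x-\alpha^n(x)$.
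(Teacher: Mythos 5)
Your proof is correct and is essentially identical to the paper's: both rest on the factorization $1-\alpha^n = (1-\alpha)\sum_{k=0}^{n-1}\alpha^k$ and then use that $1-\alpha$ is injective (being an automorphism) to pass between $\alpha^n(x)=x$ and $\sum_{k=0}^{n-1}\alpha^k(x)=0$. You merely spell out the telescoping and the chain of equivalences in more detail than the paper does.
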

\begin{proof}
We have $1-\alpha^n = (1-\alpha)\sum_{k=0}^{n-1}\alpha^k$. Since $1-\alpha\in \aut{A}$, we deduce that $\alpha^n(x)=x$ if and only if $\sum_{k=0}^{n-1}\alpha^k(x)=0$.
\end{proof}

\begin{lemma}\label{Lm:iterated product}
Let $X=\gal{A,\alpha}$ be an affine quandle. Then for every $x$, $y\in X$ and $k\ge 0$, the element $f_k(x,y)$ from \eqref{fks} is equal to
\begin{equation}\label{fks again}
    f_k(x,y)= x+\sum_{j=1}^k (-1)^j\alpha^j(x-y/0).
\end{equation}
\end{lemma}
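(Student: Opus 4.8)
The plan is to prove the closed form \eqref{fks again} by induction on $k$, exploiting the fact that in an affine quandle the product recurrence defining $f_k$ linearizes. Recall from the proof of Lemma \ref{f orbits} that, with $u=0$, the sequence $f_k=f_k(x,y)$ satisfies $f_0=x$, $f_1=x\cdot(y/0)$, and the recurrence $f_{k+1}=f_k\cdot f_{k-1}$. Since multiplication in $\gal{A,\alpha}$ is $a\cdot b=(1-\alpha)(a)+\alpha(b)$, this recurrence becomes
\[
    f_{k+1}=(1-\alpha)(f_k)+\alpha(f_{k-1}).
\]
I will abbreviate $w=x-y/0$, so that the asserted formula reads $f_k=x+\sum_{j=1}^k(-1)^j\alpha^j(w)$. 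Note that no explicit description of $y/0$ is needed; it enters only as a fixed element of $A$.

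For the base cases, $k=0$ gives the empty sum and hence $f_0=x$, as required, while $k=1$ yields $x-\alpha(x)+\alpha(y/0)=(1-\alpha)(x)+\alpha(y/0)$, which is exactly $x\cdot(y/0)=f_1$.

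For the inductive step, assume the formula holds for $k-1$ and $k$ and substitute into the linearized recurrence. Expanding $(1-\alpha)(f_k)$ contributes $x-\alpha(x)+\sum_{j=1}^k(-1)^j\alpha^j(w)$ together with an index-shifted copy $\sum_{j=2}^{k+1}(-1)^j\alpha^j(w)$ coming from the $-\alpha$ part, while $\alpha(f_{k-1})$ contributes $\alpha(x)-\sum_{j=2}^{k}(-1)^j\alpha^j(w)$. The two $\alpha(x)$ terms cancel, and the three $\alpha$-power sums telescope: the last two leave only the single term $(-1)^{k+1}\alpha^{k+1}(w)$, which appended to $\sum_{j=1}^k(-1)^j\alpha^j(w)$ gives $\sum_{j=1}^{k+1}(-1)^j\alpha^j(w)$. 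Thus $f_{k+1}=x+\sum_{j=1}^{k+1}(-1)^j\alpha^j(w)$, completing the induction.

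The only point requiring attention is the index bookkeeping in the telescoping sum; once the product recurrence is linearized by the affine operation there is no genuine obstacle, and the computation is entirely routine.
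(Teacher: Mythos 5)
Your proof is correct and follows essentially the same route as the paper: both arguments verify the base cases $k=0,1$, invoke the recurrence $f_{k+1}=f_k\cdot f_{k-1}$ from the proof of Lemma \ref{f orbits}, expand the quandle product via the affine operation $a\cdot b=(1-\alpha)(a)+\alpha(b)$, and conclude by the same telescoping of the shifted sums. The only difference is cosmetic: you linearize the recurrence before substituting, while the paper substitutes the two inductive expressions directly into the product and expands.
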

\begin{proof}
Fix $x$, $y\in X$ and let $f_k = f_k(x,y)$. The formula \eqref{fks} yields $f_0=x$ and $f_1 = x\cdot (y/0) = x-\alpha(x)+\alpha(y/0)$, in agreement with \eqref{fks again}. Suppose that \eqref{fks again} holds for $k-1$ and $k$, and recall that $f_{k+1} = f_kf_{k-1}$. Then with $z=x-y/0$ we have
\begin{align*}
    f_{k+1} &= (x+\sum_{j=1}^k (-1)^j\alpha^j(z))\cdot(x+\sum_{j=1}^{k-1}(-1)^j\alpha^j(z))\\
        &= x - \alpha(x) + \sum_{j=1}^k (-1)^j\alpha^j(z) -  \sum_{j=1}^k (-1)^j\alpha^{j+1}(z) + \alpha(x) + \sum_{j=1}^{k-1}(-1)^j\alpha^{j+1}(z)\\
        &= x + \sum_{j=1}^k (-1)^j\alpha^j(z) - (-1)^k\alpha^{k+1}(z) = x + \sum_{j=1}^{k+1}(-1)^j\alpha^j(z).
\end{align*}
\end{proof}

\begin{proposition}\label{F affine}
Let $X=\gal{A,\alpha}$ be a finite connected affine quandle. Then for every $x$, $y\in X$ we have
\begin{itemize}
\item[(i)] $|O_f(x,y)| = \min \setof{n\in\mathbb{N}}{\sum_{j=1}^n (-1)^j\alpha^j (x-y/0)=0}$,
\item[(ii)] $(-1)^{|O_f(x,y)|}\alpha^{|O_f(x,y)|}(x-y/0)=x-y/0$,
\item[(iii)] if $n = |O_f(x,y)|$ then $|O_{L_0}(x-y/0)|$ divides $(2n)/\gcd(2,n)$,
\item[(iv)] if $2(x-y/0)=0$ then $|O_f(x,y)|=|O_{L_0}(x-y/0)|$.
\end{itemize}
\end{proposition}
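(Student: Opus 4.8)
The plan is to reduce all four statements to the behaviour of $\alpha$ on the single element $z := x-y/0 \in A$, exploiting the explicit formula for $f_k$ from Lemma \ref{Lm:iterated product} together with the equivalences recorded in Proposition \ref{Prop:orbit sizes of f}. Throughout I will use that $L_0=\alpha$ and that $\alpha\in\aut{A}$, the latter being guaranteed by connectedness via Proposition \ref{connected iff latin}.

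Part (i) should fall out immediately. By Proposition \ref{Prop:orbit sizes of f}, $f^k(x,y)=(x,y)$ holds precisely when $f_k(x,y)=x$, and by Lemma \ref{Lm:iterated product} the latter is equivalent to $\sum_{j=1}^k(-1)^j\alpha^j(z)=0$. Taking the least positive such $k$ yields (i). For part (ii), set $n=|O_f(x,y)|$, so that $\sum_{j=1}^n(-1)^j\alpha^j(z)=0$. Factoring out a single power of $-\alpha$ and using that $\alpha$ (hence $-\alpha$) is invertible, I would rewrite this as $\sum_{j=0}^{n-1}(-\alpha)^j(z)=0$. The key tool is the telescoping identity $(1+\alpha)\sum_{j=0}^{n-1}(-\alpha)^j=1-(-\alpha)^n=1-(-1)^n\alpha^n$ in the endomorphism ring of $A$. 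Applying $(1+\alpha)$ to the vanishing sum gives $(1-(-1)^n\alpha^n)(z)=0$, that is $(-1)^n\alpha^n(z)=z$, which is exactly (ii).

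Parts (iii) and (iv) are then consequences obtained by splitting on the parity of $n$ and on whether $2z=0$. For (iii): if $n$ is even, (ii) reads $\alpha^n(z)=z$, so $|O_{L_0}(z)|$ divides $n=(2n)/\gcd(2,n)$; if $n$ is odd, (ii) reads $\alpha^n(z)=-z$, whence $\alpha^{2n}(z)=z$, so $|O_{L_0}(z)|$ divides $2n=(2n)/\gcd(2,n)$. For (iv): when $2z=0$, every $\alpha^j(z)$ also has order dividing $2$, so the signs in the sum of (i) collapse and $\sum_{j=1}^n(-1)^j\alpha^j(z)=\sum_{j=1}^n\alpha^j(z)=\alpha\bigl(\sum_{j=0}^{n-1}\alpha^j(z)\bigr)$. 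Since $\alpha$ is injective and, by Proposition \ref{iterative multiplication}, $\sum_{j=0}^{n-1}\alpha^j(z)=0 \Leftrightarrow \alpha^n(z)=z$, the least $n$ realizing the condition in (i) coincides with $|O_{L_0}(z)|$, proving (iv).

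The only delicate point is the bookkeeping in (ii): one must resist inverting $1+\alpha$, which need not be an automorphism, and instead apply the geometric factorization in the correct direction, invoking only the invertibility of $\alpha$. Once $z$ is isolated as the governing element and this sign-and-parity accounting is done carefully, the remaining steps are routine.
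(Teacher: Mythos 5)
Your proof is correct, and parts (i), (iii) and (iv) run along essentially the same lines as the paper's; the genuine difference is in part (ii). The paper exploits the full strength of Proposition \ref{Prop:orbit sizes of f}: for $n=|O_f(x,y)|$ one has \emph{both} $f_n(x,y)=x$ and $f_{n-1}(x,y)=y/0$, and subtracting the two corresponding equations from Lemma \ref{Lm:iterated product} gives $(-1)^n\alpha^n(x-y/0)=x-y/0$ in one line. You instead use only the vanishing of the alternating sum (i.e., only part (i)) and recover (ii) from the operator identity $(1+\alpha)\sum_{j=0}^{n-1}(-\alpha)^j=1-(-1)^n\alpha^n$, applied in the harmless direction (an endomorphism applied to $0$ is $0$), so that invertibility of $1+\alpha$ is never needed --- a point you rightly flag, since $1+\alpha$ can indeed be singular (e.g., when $\alpha$ has $-1$ as an eigenvalue). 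The paper's route is shorter because the second fixed-point condition comes for free from Proposition \ref{Prop:orbit sizes of f}; yours is more self-contained in that (ii) becomes a formal consequence of (i) alone. A minor further improvement on your side: in (iv) you factor the sum as $\alpha\bigl(\sum_{j=0}^{n-1}\alpha^j(z)\bigr)$ with $z=x-y/0$ and invoke injectivity of $\alpha$, which makes the equivalence with $\sum_{j=0}^{n-1}\alpha^j(z)=0$ valid for \emph{every} $n$ (via Proposition \ref{iterative multiplication}) and hence makes the transfer of minimality to $|O_{L_0}(z)|$ transparent; the paper's corresponding equality $\sum_{j=1}^{n}\alpha^j(z)=\sum_{j=0}^{n-1}\alpha^j(z)$ implicitly uses the periodicity from (ii) at the specific $n$, so your bookkeeping is the cleaner of the two.
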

\begin{proof}
(i) By Proposition \ref{Prop:orbit sizes of f}, $|O_f(x,y)|$ is the smallest positive $k$ such that $f_k(x,y)=x$, or, equivalently, $f_{k-1}(x,y)=y/0$. By Lemma \ref{Lm:iterated product}, $f_k(x,y)=x$ if and only if $\sum_{j=1}^k (-1)^j\alpha^j (x-y/0)=0$.

(ii) Let $z=y/0$ and $n=|O_f(x,y)|$. By Lemma \ref{Lm:iterated product} and the above remarks, we have
\begin{displaymath}
    x = f_n(x,y) =x+\sum_{j=1}^n(-1)^j\alpha^j(x-z)\quad\text{and}\quad
    z = f_{n-1}(x,y) =x+\sum_{j=1}^{n-1}(-1)^j\alpha^j(x-z).
\end{displaymath}
Taking the difference of these two equations yields $(-1)^{n}\alpha^n(x-z)=x-z$.

(iii) Since $L_0=\alpha$, part (ii) shows that $|O_{L_0}(x-z)|$ divides $2n$. If $n$ is even, (ii) in fact shows that $|O_{L_0}(x-z)|$ divides $n$.

(iv) Suppose that $2(x-z)=0$. Then $2\alpha^j (x-z)=0$ for every $j$. Part (i) then yields $0 = \sum_{j=1}^n(-1)^j\alpha^j (x-z)=\sum_{j=1}^n \alpha^j (x-z) = \sum_{j=0}^{n-1}\alpha^j(x-z)$, and Proposition \ref{iterative multiplication} implies $\alpha^n(x-z)=x-z$. As $n$ is minimal with $0 = \sum_{j=0}^{n-1}\alpha^j (x-z)$, we deduce $|O_{L_0}(x-z)|=n$.
\end{proof}

We will now express $|O_f(x,y)|$ as a function of $|O_{L_0}(x-y/0)|$ and the order of a certain element of $A$. We present only the case when $|O_{L_0}(x-y/0)|$ is even (since that is all we need later), but the argument in the odd case is similar.

\begin{lemma}\label{Lem: length F even case}
Let $X=\gal{A,\alpha}$ be a finite connected affine quandle and let $x$, $y\in X$. Suppose that $\ell=|O_{L_0}(x-y/0)|$ is even. Then
\begin{displaymath}
    |O_f(x,y)| =\left\{
    \begin{array}{ll}
        k\ell,&\text{ if }|O_f(x,y)|\text{ is even},\\
        k'\ell/2,&\text{ otherwise},
    \end{array}
    \right.
\end{displaymath}
where $k=|\sum_{j=0}^{\ell-1}(-1)^j\alpha^j(x-y/0)|$ and $k' = |\sum_{j=0}^{\ell/2-1}(-1)^j\alpha^j(x-y/0)|$.
\end{lemma}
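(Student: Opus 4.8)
Let me write $z = x - y/0$ throughout, and set $n = |O_f(x,y)|$. By Proposition~\ref{F affine}(i), $n$ is the smallest positive integer with $\sum_{j=1}^n (-1)^j\alpha^j(z) = 0$; equivalently, since $\alpha$ is invertible, the smallest positive $n$ with $\sum_{j=0}^{n-1}(-1)^j\alpha^j(z) = 0$ (multiply by $-\alpha^{-1}$). The plan is to compare this alternating partial sum with the ordinary partial sum $S_\ell := \sum_{j=0}^{\ell-1}\alpha^j(z)$, which vanishes because $\ell = |O_{L_0}(z)| = |O_\alpha(z)|$ forces $\alpha^\ell(z) = z$, and then Proposition~\ref{iterative multiplication} gives $\sum_{j=0}^{\ell-1}\alpha^j(z) = 0$. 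So the period $\ell$ of $\alpha$ on $z$ is exactly the vanishing index for the \emph{non}-alternating sum, and I want to relate the alternating vanishing index $n$ to it.

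\medskip

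\noindent\textbf{Key algebraic identity.} The main step is to understand how the alternating partial sum behaves under shifts by $\ell$. Write $T_m := \sum_{j=0}^{m-1}(-1)^j\alpha^j(z)$, so $n$ is the least positive $m$ with $T_m = 0$. Using $\alpha^\ell(z) = z$ and splitting the sum into blocks of length $\ell$, I would compute $T_{m+\ell}$ in terms of $T_m$ and a block contribution. Since $\ell$ is even, the sign pattern $(-1)^j$ repeats with period $\ell$, so that $\sum_{j=m}^{m+\ell-1}(-1)^j\alpha^j(z) = (-1)^m \alpha^m\!\left(\sum_{j=0}^{\ell-1}(-1)^j\alpha^j(z)\right) = (-1)^m\alpha^m(w)$, where $w := \sum_{j=0}^{\ell-1}(-1)^j\alpha^j(z)$ is the single ``alternating block.'' Because $\ell$ is even one checks $\alpha^\ell(w) = w$ as well, and in fact $w$ is fixed by $\alpha^{?}$ in a way that makes $\alpha^m(w)$ cycle through finitely many values. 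The upshot I expect is that $T_{m+\ell} = T_m + (-1)^m\alpha^m(w)$, so that after $\ell$ steps the alternating sum advances by a copy of $w$. Iterating, $T_{k\ell}$ is essentially a sum of copies of $w$ (the signs align because $\ell$ is even, making each $(-1)^{m}$ with $m$ a multiple of $\ell$ equal to $+1$), giving $T_{k\ell} = k\,w'$ for a suitable $\alpha$-translate, and $T_{k\ell} = 0$ precisely when $k$ is a multiple of the additive order $|w|$. This identifies the smallest \emph{even} $n$ (necessarily a multiple of $\ell$, by Proposition~\ref{F affine}(iii)) as $k\ell$ with $k = |w|$, matching $k = |\sum_{j=0}^{\ell-1}(-1)^j\alpha^j(z)|$.

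\medskip

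\noindent\textbf{The odd case for $n$.} When $n$ is odd, Proposition~\ref{F affine}(ii) gives $-\alpha^n(z) = z$, i.e. $\alpha^n(z) = -z$, so $\alpha^{2n}(z) = z$ and hence $\ell \mid 2n$ but $\ell \nmid n$; since $\ell$ is even this forces $n$ to be an odd multiple of $\ell/2$. The strategy here is to run the same block analysis but with block length $\ell/2$ and to track the half-block sum $w' := \sum_{j=0}^{\ell/2-1}(-1)^j\alpha^j(z)$. Using $\alpha^{\ell/2}(z) = -z$ (which follows from $\alpha^\ell(z)=z$ together with $\ell\nmid n$ analysis, or is established directly as the relevant ``anti-period''), the half-block shift satisfies a relation of the form $T_{m+\ell/2} = T_m \pm \alpha^m(w')$, and vanishing of $T_n$ occurs exactly when the number $k'$ of half-blocks equals the additive order $|w'|$, giving $n = k'\ell/2$. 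I would need to verify that in this branch $k'$ is odd (so that $n = k'\ell/2$ is genuinely the odd case), which should follow from minimality of $n$ and the sign bookkeeping.

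\medskip

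\noindent\textbf{Main obstacle.} The delicate point is the sign bookkeeping in the block-shift identity and, relatedly, establishing the ``anti-period'' relation $\alpha^{\ell/2}(z) = -z$ that drives the odd case. The parity of $\ell$ is essential: it is what makes the sign pattern $(-1)^j$ truly periodic of period $\ell$ (and anti-periodic of period $\ell/2$), and getting the interaction between this sign periodicity and the $\alpha$-period of $z$ exactly right—so that the two cases $n$ even versus $n$ odd correspond cleanly to $|w|$ versus $|w'|$—is where the real care is required. I expect that once the identity $T_{m+\ell} = T_m + (-1)^m\alpha^m(w)$ (and its half-block analogue) is pinned down, both formulas for $|O_f(x,y)|$ follow by reducing ``least vanishing index'' to ``additive order of the block sum,'' exactly as in the statement.
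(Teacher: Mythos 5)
Your proposal is correct and takes essentially the same approach as the paper's proof: in both, the alternating sum is split into blocks of length $\ell$ (when $|O_f(x,y)|$ is even) or $\ell/2$ (when it is odd), using the signed periodicity $(-1)^\ell\alpha^\ell(z)=z$, respectively the anti-period $\alpha^{\ell/2}(z)=-z$ obtained from Proposition \ref{F affine}(ii), and minimality of $|O_f(x,y)|$ then identifies the number of blocks with the additive order of the block sum, exactly as in the statement. The only cosmetic differences are that you derive $\ell\mid 2n$ directly from part (ii) where the paper cites part (iii), and that you phrase the block decomposition as an iterated shift identity $T_{m+\ell}=T_m+(-1)^m\alpha^m(w)$ rather than splitting the full sum at once.
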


\begin{proof}
Let $z=x-y/0$, $\ell=|O_{L_0}(z)|$ and $n=|O_f(x,y)|$. Suppose that $n$ is even. Then, by Proposition \ref{F affine}(iii), $n=r\ell$ for some $r$. By Proposition \ref{F affine}(i), we have
\begin{displaymath}
    0 = \sum_{j=1}^{r\ell} (-1)^j\alpha^j(z) = \sum_{j=0}^{r\ell-1} (-1)^j\alpha^j(z)=r \sum_{j=0}^{\ell-1}(-1)^j\alpha^j(z),
\end{displaymath}
where we have used $r\ell$ even in the second equality and $(-1)^\ell \alpha^\ell(z)=z$ (due to $\ell$ even) in the third equality. Moreover, $|\sum_{j=0}^{\ell-1}(-1)^j\alpha^j(z)|=r$ because $r\ell$ is the smallest positive integer for which $\sum_{j=1}^{r\ell} (-1)^j\alpha^j(z)=0$.

Suppose now that $n$ is odd. Then, by Proposition \ref{F affine}(iii), $n=(2s+1)\ell/2$ for some $s$. Since $n$ is odd, we have $\ell/2$ odd and Proposition \ref{F affine}(iii) yields $-z = \alpha^n(z)= \alpha^{(2s+1)\ell/2}(z) = \alpha^{\ell/2}(z)$ and therefore $(-1)^{\ell/2}\alpha^{\ell/2}(z)=z$. Using these observations, Proposition \ref{F affine}(i) implies
\begin{displaymath}
    0 = \sum_{j=1}^{(2s+1)\ell/2}(-1)^j\alpha^j(z) = \sum_{j=0}^{(2s+1)\ell/2-1} (-1)^j\alpha^j(z) =(2s+1)\sum_{j=0}^{\ell/2-1}(-1)^j\alpha^j(z).
\end{displaymath}
We again have $|\sum_{j=0}^{\ell/2-1}(-1)^j\alpha^j(z)|=2s+1$.
\end{proof}

We conclude this section by explicitly calculating $h$ and $|O_h(x,y)|$ in a connected affine quandle.

\begin{lemma}\label{geometric prog}
Let $X=\gal{A,\alpha}$ be a connected affine quandle and let $\beta$ be a $0$-normalized cocycle. Then
\begin{eqnarray}
    h(x,y)&=&(y+x,y),  \label{form:traslation by n}\\
    \beta(ny+x,y)&=&\beta(x,y), \label{omega translations}\\
    \beta(nx,x)&=&1\label{beta x_n x}
\end{eqnarray}
for every integer $n$ and every $x$, $y\in X$. In particular, $|O_h(x,y)|=|y|$.
\end{lemma}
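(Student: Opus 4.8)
The plan is to first establish the explicit formula \eqref{form:traslation by n} for $h$ by a direct computation in the affine representation, and then to read off the cocycle identities \eqref{omega translations}, \eqref{beta x_n x} and the orbit length from it together with the $h$-invariance of $\beta$ (Proposition \ref{Prop:invariance_under_h}).

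First I would write the quandle operation in additive notation. Since $X=\gal{A,\alpha}$ is principal with $u=0$, we have $x\cdot y=(1-\alpha)(x)+\alpha(y)$, so that $R_y(z)=(1-\alpha)(z)+\alpha(y)$ and $L_x(z)=(1-\alpha)(x)+\alpha(z)$. Connectedness gives $1-\alpha\in\aut{A}$ by Proposition \ref{connected iff latin}, so both translations are invertible and I can compute the divisions
\[
    x\ldiv 0 = L_x^{-1}(0) = (1-\alpha^{-1})(x), \qquad a/b = R_b^{-1}(a) = (1-\alpha)^{-1}(a-\alpha(b)).
\]
Substituting $b=x\ldiv 0$ into the right division and using $\alpha(x\ldiv 0)=\alpha(x)-x=-(1-\alpha)(x)$, I get $y/(x\ldiv 0)=(1-\alpha)^{-1}(y)+x$. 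Computing the product with $x$, namely $(1-\alpha)\bigl((1-\alpha)^{-1}(y)+x\bigr)+\alpha(x)$, the terms involving $\alpha(x)$ cancel and I obtain $y/(x\ldiv 0)\cdot x = y+x$, which is exactly \eqref{form:traslation by n}. This computation, unwinding the two division operations and the automorphism bookkeeping, is the only place where real work happens, so I expect it to be the main obstacle.

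With \eqref{form:traslation by n} in hand, a one-line induction gives $h^n(x,y)=(ny+x,y)$ for every integer $n$. Since $\beta$ is $h$-invariant by Proposition \ref{Prop:invariance_under_h}, applying invariance $n$ times yields $\beta(ny+x,y)=\beta(h^n(x,y))=\beta(x,y)$, establishing \eqref{omega translations}. For \eqref{beta x_n x} I specialize \eqref{omega translations} to the pair $(0,x)$ in place of $(x,y)$ (so that the translating element coincides with the second coordinate), obtaining $\beta(nx,x)=\beta(0,x)$; since $\beta$ is $0$-normalized, $\beta(0,x)=1$ for every $x$ (this is the identity $\beta(u,x)=1$ proved inside Proposition \ref{Prop:invariance_under_g} via Lemma \ref{Lm:LatinEq}), and \eqref{beta x_n x} follows.

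Finally, the orbit length is immediate from $h^n(x,y)=(ny+x,y)$: the orbit closes, $h^n(x,y)=(x,y)$, exactly when $ny=0$ in $A$, that is, when $n$ is a multiple of the additive order $|y|$ of $y$. Hence the least positive such $n$ is $|y|$ and $|O_h(x,y)|=|y|$.
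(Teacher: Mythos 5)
Your proposal is correct and follows essentially the same route as the paper: a direct computation of the divisions in the affine representation to obtain $h(x,y)=(y+x,y)$, followed by iteration of $h$-invariance to get \eqref{omega translations} and the orbit length. The only cosmetic difference is in \eqref{beta x_n x}, where you specialize to $(0,x)$ and invoke $\beta(0,x)=1$ (via Proposition \ref{Prop:invariance_under_g} and Lemma \ref{Lm:LatinEq}), while the paper simply sets $y=x$ and uses $\beta(x,x)=1$ from \eqref{CQC}; both are valid one-line conclusions.
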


\begin{proof}
Let $x$, $y\in X$, set $z=x\ldiv 0$ and note that $z=x-\alpha^{-1}(x)$. Then
\begin{align*}
    h(x,y)&=(y/(x\ldiv 0)\cdot x,y) =(y/z\cdot x,y)\\
     &=((z+ (1-\alpha)^{-1}(y-z))\cdot x,y)=((1-\alpha)(z)+y-z+\alpha(x),y)\\
     &=(-\alpha(z)+y+\alpha(x),y)=(-\alpha(x)+x+y+\alpha(x),y) \\
     &= (y+x,y).
\end{align*}
The $h$-invariance of $\beta$ (cf. Proposition \ref{Prop:invariance_under_h}) then yields \eqref{omega translations}. With $y=x$, \eqref{omega translations} yields $\beta((n+1)x,x) = \beta(x,x)=1$, which is \eqref{beta x_n x}. Finally, $|O_h(x,y)|=|y|$ is an immediate consequence of \eqref{form:traslation by n}.
\end{proof}

\section{Two classes of simply connected quandles}\label{Sc:Classes}

In this section we show that every finite connected affine quandle over a cyclic group is simply connected (extending a result of Gra\~{n}a for connected quandles of prime order), and that every finite doubly transitive quandle of order different from $4$ is simply connected.

\subsection{Connected affine quandles over cyclic groups}

Gra\~{n}a showed:

\begin{proposition}[{{\cite[Lemma 5.1]{G}}}]\label{Prop:cohomology Z_p}
Let $q$ be a prime and $X=\gal{\mathbb{Z}_q,\alpha}$ an affine quandle. Then $X$ is simply connected.
\end{proposition}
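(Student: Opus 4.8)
The plan is to reduce the statement to the triviality of normalized cocycles and then to read off the conclusion from the explicit affine description of $h$. Setting $u=0$ as throughout the affine sections, I first dispose of a degenerate case: if $\alpha=1$ then $X$ is the projection quandle on $\mathbb{Z}_q$, which is not connected for $q>1$, so I may assume $\alpha\neq 1$. By Proposition \ref{connected iff latin} this makes $X$ a connected (equivalently, latin) affine quandle, so all the machinery developed for latin and connected affine quandles is available. By Corollary \ref{trivial cohomology and normalized}, it then suffices to prove that every $0$-normalized cocycle $\beta\in Z^2_c(X,\sym{S})$ equals $1$; the triviality of $H^2_c(X,\sym{S})$ for every set $S$ will then give simple connectedness via Proposition \ref{Pr:SC}.

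The key point is that Lemma \ref{geometric prog} already carries almost the entire argument. Because $X$ is connected affine, that lemma supplies the identity \eqref{beta x_n x}, namely $\beta(nx,x)=1$ for every integer $n$ and every $x\in X$. Primality enters exactly here: for a fixed $y\neq 0$, the element $y$ generates the cyclic group $\mathbb{Z}_q$, so as $n$ ranges over $\mathbb{Z}$ the multiples $ny$ run through all of $\mathbb{Z}_q$. Applying \eqref{beta x_n x} with $x=y$ therefore yields $\beta(z,y)=1$ for \emph{every} $z\in X$, for each fixed $y\neq 0$. In other words, the dependence of $\beta$ on its first coordinate collapses completely once $y\neq 0$.

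It remains only to treat $y=0$, and here $\beta(z,0)=1$ for all $z$ is precisely the $0$-normalization hypothesis. Combining the two cases gives $\beta\equiv 1$, which completes the reduction and hence the proof.

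I do not expect a serious obstacle in this case: the whole weight of the argument rests on the explicit form of $h$ and the consequent identity \eqref{beta x_n x} from Lemma \ref{geometric prog}, together with the elementary fact that in a prime cyclic group every nonzero element is a generator. The only point needing a word of care is excluding $\alpha=1$, so that $X$ is genuinely connected and the normalized-cocycle reduction of Corollary \ref{trivial cohomology and normalized} applies.
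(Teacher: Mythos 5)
Your proof is correct and takes essentially the same route as the paper's: reduce via Corollary \ref{trivial cohomology and normalized} and Proposition \ref{Pr:SC} to showing every $0$-normalized cocycle is trivial, then combine identity \eqref{beta x_n x} of Lemma \ref{geometric prog} with the fact that every nonzero element of $\mathbb{Z}_q$ generates it. The only differences are cosmetic: you explicitly exclude the non-connected case $\alpha=1$ and spell out the boundary cases (second coordinate $0$, first coordinate $0$), which the paper's one-line proof leaves implicit.
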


\begin{proof}
Let $x$, $y\in X$ be nonzero elements and let $\beta$ be a $0$-normalized cocycle. Then $y=nx$ for a suitable $n$ and \eqref{beta x_n x} yields $\beta(y,x) = \beta(nx,x)=1$.
\end{proof}

It is known that every connected quandle of prime order $q$ is isomorphic to an affine quandle of the form $\gal{\mathbb{Z}_q,\alpha}$, cf. \cite{EGS}. Proposition \ref{Prop:cohomology Z_p} therefore states that every connected quandle of prime order is simply connected.

Every automorphism of $\mathbb Z_m$ is of the form $\lambda_n$ for some $n$ with $\gcd(m,n)=1$, where $\lambda_n(x) = nx$. Suppose that $\gcd(m,n)=1$. As an immediate consequence of Proposition \ref{connected iff latin}, we see that the affine quandle $\gal{\mathbb Z_m,\lambda_n}$ is connected if and only if $\gcd(m,1-n)=1$. Note that the conditions $\gcd(m,n)=1=\gcd(m,n-1)$ imply that $m$ is odd.

Let $\mathcal U(\mathbb Z_m)$ denote the group of units in the ring of integers modulo $m$.

\begin{proposition}\label{Prop:beta and generators}
Let $X=\gal{\mathbb Z_m,\lambda_n} $ be a connected affine quandle and let $\beta $ be a $0$-normalized cocycle. Then $\beta$ is $u$-normalized for every $u\in\mathcal U(\mathbb Z_m)$, that is, $\beta(x,u)=1$ for every $x\in X$ and $u\in \mathcal U(\mathbb Z_m)$. In addition, $\beta(u,x)=1$ and $\beta(u\cdot x,u\cdot y) = \beta(x,y)$ for every $x$, $y\in X$ and $u\in\mathcal U(\mathbb Z_m)$.
\end{proposition}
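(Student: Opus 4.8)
The plan is to bootstrap from the single $0$-normalization to $u$-normalization for every unit $u$, after which the remaining two assertions fall out of the machinery already developed for the bijection $g$. Since $X=\gal{\mathbb Z_m,\lambda_n}$ is connected, it is latin by Proposition \ref{connected iff latin}, so all earlier results apply to it.

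For the first assertion I would invoke Lemma \ref{geometric prog}, whose identity \eqref{beta x_n x} states that $\beta(kx,x)=1$ for every integer $k$ and every $x\in X$ (using that $\beta$ is $0$-normalized). Specializing $x=u$ gives $\beta(ku,u)=1$ for every integer $k$. The crucial arithmetic observation is that a unit $u\in\mathcal U(\mathbb Z_m)$ has additive order $m$, so $\{\,ku \bmod m : k\in\mathbb Z\,\}$ is all of $\mathbb Z_m=X$. Consequently $\beta(z,u)=1$ for every $z\in X$, which is exactly the statement that $\beta$ is $u$-normalized.

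Once $u$-normalization is in hand, I would read the remaining two identities as statements about the bijection $g=L_u\times L_u$ built from the base point $u$. By Proposition \ref{Prop:invariance_under_g} every $u$-normalized cocycle is $g$-invariant, i.e. $\beta(u\cdot x,u\cdot y)=\beta(x,y)$ for all $x$, $y$ (here $u\cdot x=L_u(x)$ is the quandle product), which is the last assertion. For the middle assertion I would appeal to Lemma \ref{Lm:LatinEq}, which for base point $u$ makes $g$-invariance equivalent to $\beta(u,x)=1$ for all $x$; this equality is in any case established inside the proof of Proposition \ref{Prop:invariance_under_g}. Thus the second and third assertions are two faces of the same $g$-invariance and both follow from the $u$-normalization proved in the first step.

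The only genuinely delicate point is the arithmetic in the first step, and it is exactly where the cyclic hypothesis is used: the conclusion $\beta(z,u)=1$ for all $z$ requires $\{ku\}$ to exhaust $\mathbb Z_m$, which holds precisely because $u$ is a unit. For a non-unit $u$ the multiples $ku$ would fill only the subgroup $\langle u\rangle$ and the argument would stall; this is the feature distinguishing the present cyclic setting from Gra\~na's prime case (Proposition \ref{Prop:cohomology Z_p}), where every nonzero element is automatically a unit. I expect no further obstacle. Indeed this proposition is tuned to feed the subsequent simple-connectivity argument, since comparing the $g$-invariance for base point $0$ with that for a unit base point yields invariance of $\beta$ under diagonal translations, which together with $0$-normalization forces $\beta\equiv 1$.
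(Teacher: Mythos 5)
Your proof is correct and follows essentially the same route as the paper: both obtain $u$-normalization from identity \eqref{beta x_n x} of Lemma \ref{geometric prog} together with the fact that a unit generates $(\mathbb Z_m,+)$, then deduce $g$-invariance from Proposition \ref{Prop:invariance_under_g} and $\beta(u,x)=1$ from Lemma \ref{Lm:LatinEq}. No gaps to report.
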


\begin{proof}
Let $u\in\mathcal U(\mathbb Z_m)$ and $x\in X$. Then $x = nu$ for a suitable $n$ and we have $\beta(x,u) = \beta(nu,u) = 1$ by \eqref{beta x_n x}, showing that $\beta$ is $u$-normalized. By Proposition \ref{Prop:invariance_under_g}, $\beta(u\cdot x,u\cdot y) = \beta(x,y)$ for every $x$, $y\in X$. Then by Lemma \ref{Lm:LatinEq}, $\beta(u,x)=1$ for every $x\in X$.
\end{proof}

\begin{lemma}\label{Prop: x=u+v}
Let $X=\gal{\mathbb Z_m,\alpha}$ be a connected affine quandle with $m$ odd. Then for every $x\in X$ there are $u$, $v\in \mathcal U(\mathbb Z_m)$ such that $x=u\cdot v$.
\end{lemma}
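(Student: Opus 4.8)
The plan is to translate the statement into a purely number-theoretic question about sums of units in $\mathbb{Z}_m$. Writing $\alpha=\lambda_n$ with $\gcd(m,n)=1$, the connectedness of $X$ gives $\gcd(m,1-n)=1$ by Proposition \ref{connected iff latin}, so both $n$ and $1-n$ lie in $\mathcal U(\mathbb Z_m)$. In additive notation the quandle product is $x\cdot y=(1-\alpha)(x)+\alpha(y)=(1-n)x+ny$. Hence I first rewrite $u\cdot v=(1-n)u+nv$ and observe that, since $1-n$ and $n$ are units, the maps $u\mapsto(1-n)u$ and $v\mapsto nv$ permute $\mathcal U(\mathbb Z_m)$. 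Therefore $\setof{u\cdot v}{u,v\in\mathcal U(\mathbb Z_m)}=\setof{a+b}{a,b\in\mathcal U(\mathbb Z_m)}$, and the lemma reduces to showing that every $x\in\mathbb Z_m$ is a sum of two units.

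To produce such a decomposition I would look for a unit $a$ with $x-a$ also a unit, and I would construct $a$ residue-class by residue-class. An integer is a unit modulo $m$ exactly when it is nonzero modulo every prime $p$ dividing $m$; thus $a$ and $x-a$ are both units if and only if $a\not\equiv 0$ and $a\not\equiv x\pmod p$ for every prime $p\mid m$. For each such $p$ the forbidden residues form the set $\{0,\,x\bmod p\}$, of size at most $2$. Since $m$ is odd, every prime divisor $p$ of $m$ satisfies $p\ge 3$ and hence has at least three residues, leaving an admissible residue $a_p\notin\{0,x\}$ modulo $p$. By the Chinese Remainder Theorem I can choose an integer $a$ with $a\equiv a_p\pmod p$ for all $p\mid m$; then $a$ and $x-a$ are both units, $x=a+(x-a)$, and setting $u=(1-n)^{-1}a$ and $v=n^{-1}(x-a)$ yields units with $u\cdot v=x$.

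The only real content is the sum-of-two-units step, and it is the single place where the hypothesis that $m$ is odd is indispensable: for $p=2$ there are only the residues $\{0,1\}$, which the forbidden set $\{0,x\}$ can exhaust, and indeed in $\mathbb Z_{2^e}$ a sum of two (odd) units is always even, so an odd $x$ would admit no such decomposition. Everything else is routine bookkeeping with the Chinese Remainder Theorem, so I anticipate no further obstacle once the reduction to a sum of two units is in place.
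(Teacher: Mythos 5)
Your proof is correct. The reduction step is the same as the paper's: both arguments use that $\alpha$ and $1-\alpha$ act on $\mathbb Z_m$ as multiplication by units (connectedness giving $\gcd(m,1-n)=1$ via Proposition \ref{connected iff latin}), so that the set of quandle products of two units coincides with the set of sums of two units. Where you diverge is in proving that every element of $\mathbb Z_m$ is a sum of two units. The paper uses the Chinese Remainder Theorem to reduce to the prime-power case $m=p^e$ and then exhibits explicit witnesses: $x=2x+(-x)$ when $x$ is a unit (oddness of $m$ making $2x$ a unit), and $x=(ap+1)+(-1)$ when $x=ap$ is not. You instead run a residue-avoidance argument: a single unit $a$ with $x-a$ also a unit exists because, for each prime $p\mid m$, the forbidden residues $\{0,\,x\bmod p\}$ number at most two while $p\ge 3$, and CRT glues the admissible choices $a_p$ together. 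Your version only needs congruences modulo the distinct primes dividing $m$ (not prime powers), makes completely transparent where oddness enters ($p\ge 3$ leaves room to dodge two residues), and your closing observation that the decomposition genuinely fails in $\mathbb Z_{2^e}$ correctly isolates the obstruction; the paper's version, by contrast, hands you concrete unit pairs. Both are routine CRT bookkeeping of comparable length, so the choice is a matter of taste.
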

\begin{proof}
Since $u+v=(1-\alpha)^{-1}(u)\cdot \alpha^{-1}(v)$ and $\alpha$ and $1-\alpha$ are automorphisms of $(\mathbb Z_m,+)$, it suffices to prove that for every $x\in\mathbb Z_m$ there are $u$, $v\in\mathcal U(\mathbb Z_m)$ such that $x=u+v$. This is well-known and can be established as follows:

Let $m=p_1^{n_1}\cdots p_r^{n_r}$, where $p_1$, $\dots$, $p_r$ are distinct primes. By the Chinese remainder theorem, $\mathbb Z_m\cong \mathbb Z_{p_1^{n_1}}\times\cdots\times\mathbb Z_{p_r^{n_r}}$ and also $\mathcal U(\mathbb Z_m)\cong \mathcal U(\mathbb Z_{p_1^{n_1}})\times\cdots\times\mathcal U(\mathbb Z_{p_r^{n_r}})$. It therefore suffices to prove the claim when $m=p^n$ for a prime $p$. Consider $x=ap+b$, where $0\le b<p$. If $x$ is invertible (that is, $b\ne 0$), then so is $2x$ (since $m$ is odd), and $x = 2x+(-x)$ does the job. If $x$ is not invertible, then $x = ap = (ap+1)+(-1)$ finishes the proof.
\end{proof}

\begin{theorem}\label{Teo: H(Z_m)=1}
Let $X=\gal{\mathbb Z_m,\lambda_n}$ be a connected affine quandle. Then $X$ is simply connected.
 \end{theorem}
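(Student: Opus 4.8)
The plan is to show that $H^2_c(X,\sym{S})$ is trivial for every nonempty set $S$, which by Proposition \ref{Pr:SC} gives simple connectivity. By Corollary \ref{trivial cohomology and normalized}(iii) it suffices to fix a $0$-normalized cocycle $\beta$ and prove that $\beta(x,y)=1$ for every $x,y\in X$. I will use $u=0$ as the base point, so all the machinery of Sections \ref{Sc:ThreeBijections}--\ref{Sc:OrbitsOnAffine} applies. The key tool is Lemma \ref{geometric prog}, which in the connected affine case identifies $h$ with the translation $h(x,y)=(y+x,y)$ and yields the crucial relation $\beta(nx,x)=1$ for every integer $n$ (equation \eqref{beta x_n x}). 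In other words, $\beta$ vanishes on every pair whose first coordinate is an integer multiple of the second. The strategy is therefore to reduce an arbitrary pair $(x,y)$ to one of this special form by exploiting the invariances of $\beta$.

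First I would record the additional invariances collected in Proposition \ref{Prop:beta and generators}: for every unit $u\in\mathcal U(\mathbb Z_m)$ we have $\beta(x,u)=1=\beta(u,x)$ for all $x$, and moreover $\beta(u\cdot x,u\cdot y)=\beta(x,y)$ for all $x,y$. Thus multiplication by a unit (in the quandle sense) acts on the pairs without changing the value of $\beta$, and $\beta$ already vanishes whenever either coordinate is a unit. Next I would bring in Lemma \ref{Prop: x=u+v}, which says every $x\in X$ can be written as a quandle product $x=u\cdot v$ of two units when $m$ is odd; recall that connectivity of $\gal{\mathbb Z_m,\lambda_n}$ already forces $m$ to be odd. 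The combination of these facts is what lets us move a non-unit coordinate into a unit coordinate: if $y=u\cdot v$ with $u,v$ units, then since $\beta$ is invariant under multiplication by the unit $u$ we have $\beta(x,y)=\beta(x,u\cdot v)=\beta(u^{-1}\!\cdot x,\,v)$ in the appropriate quandle-division sense, and the second coordinate $v$ is now a unit, forcing the value to be $1$.

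The main step, then, is to show that \emph{every} pair $(x,y)$ can be transported—via the unit-multiplication invariance and possibly \eqref{omega translations}—to a pair whose second coordinate is a unit (or whose first coordinate is a multiple of the second), whereupon $\beta=1$. Concretely I would argue: given $(x,y)$, write $y=u\cdot v$ with $u,v$ units using Lemma \ref{Prop: x=u+v}, apply $\beta(u\cdot a,u\cdot b)=\beta(a,b)$ to strip off the left factor $u$, and conclude $\beta(x,y)=\beta(x',v)=1$ since $v$ is a unit. I expect the main obstacle to be the bookkeeping of the quandle division when peeling off the unit factor—verifying that the pair $(x,y)$ really does lie in the $g$-orbit image (equivalently the $\langle f,g,h\rangle$-orbit) of a pair with a unit second coordinate, and that no pair escapes this reduction. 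The affine translation formula \eqref{omega translations}, $\beta(ny+x,y)=\beta(x,y)$, is the safety net here: it lets me adjust the first coordinate freely by multiples of the second, which together with Lemma \ref{Prop: x=u+v} applied to the second coordinate should cover every pair and close the argument.
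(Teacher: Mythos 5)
Your proposal is correct and is essentially the paper's own proof: the paper likewise combines Lemma \ref{Prop: x=u+v} (writing an element as a quandle product $u\cdot v$ of units), Proposition \ref{Prop:beta and generators}, and the invariance $\beta(u\cdot a,u\cdot b)=\beta(a,b)$ to reduce to a pair with a unit coordinate, the only cosmetic difference being that the paper decomposes the first coordinate $x=u\cdot v$ and concludes via $\beta(v,\cdot)=1$, whereas you decompose $y$ and conclude via $\beta(\cdot,v)=1$. The ``bookkeeping'' you worried about is immediate from the latin property, since $x=u\cdot(u\backslash x)$ gives $\beta(x,u\cdot v)=\beta(u\backslash x,v)=1$ in one step, and neither \eqref{omega translations} nor any orbit analysis is needed beyond what already went into Proposition \ref{Prop:beta and generators}.
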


\begin{proof}
Let $x$, $y\in X$ and let $\beta$ be a $0$-normalized cocycle. By Proposition \ref{Prop: x=u+v}, we can write $x=u\cdot v$ for some invertible elements $u$, $v$. By Proposition \ref{Prop:beta and generators}, $\beta(x,y) = \beta(u \cdot v,u\cdot u\backslash y)=\beta (v,u\backslash y)=1$.
\end{proof}

\subsection{Doubly transitive quandles}\label{Ss:DoublyTransitive}

Finite doubly transitive quandles can be characterized as follows:

\begin{theorem}[{{\cite[Corollary 4]{V}}}]\label{Th:DoublyTransitiveQuandle}
Let $X$ be a finite quandle. Then $\lmlt{X}$ is doubly transitive if and only if $X\cong\gal{\mathbb Z_q^n,\alpha}$ for some prime $q$, some $n>0$ and $\alpha\in\aut{\mathbb Z_q^n}$ with $|\alpha| =|X|-1$.
\end{theorem}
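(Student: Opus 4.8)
For the easy direction ($\Leftarrow$), suppose $X=\gal{\mathbb Z_q^n,\alpha}$ with $|\alpha|=q^n-1$. I would first isolate the purely group-theoretic fact that such an $\alpha\in\aut{\mathbb Z_q^n}$ is a \emph{Singer cycle}, acting transitively on the $q^n-1$ nonzero vectors. Since $\gcd(|\alpha|,q)=1$, $\alpha$ is semisimple, so writing $V=\mathbb Z_q^n$ we may split $V=\bigoplus_i V_i$ into irreducible $\mathbb F_q\langle\alpha\rangle$-modules with $\dim V_i=d_i$ and $\alpha|_{V_i}$ of order dividing $q^{d_i}-1$; hence $|\alpha|\le\prod_i(q^{d_i}-1)$, and a short estimate shows $\prod_i(q^{d_i}-1)<q^n-1$ as soon as there are at least two summands. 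Thus $V$ is irreducible, $V\cong\mathbb F_{q^n}$ with $\alpha$ multiplication by a primitive element, so $\alpha$ acts regularly on $V\setminus\{0\}$ and $1-\alpha$ is invertible. Writing $T_a$ for the translation $y\mapsto a+y$, one has $L_0=\alpha$, $L_x=T_x\,\alpha\,T_{-x}$ and $L_xL_0^{-1}=T_{(1-\alpha)(x)}$, so $\lmlt X=V\rtimes\langle\alpha\rangle$ with stabiliser of $0$ equal to $\langle\alpha\rangle$. Since $V$ acts regularly and $\langle\alpha\rangle$ is transitive on $V\setminus\{0\}$, $\lmlt X$ is doubly transitive.

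For the hard direction ($\Rightarrow$), assume $\lmlt X$ is doubly transitive. Then $X$ is connected, and I would realise it as a coset quandle: normalising the base point to $0$, idempotency gives $L_0\in G_0$ where $G:=\lmlt X$ and $G_0$ is the stabiliser, and the homogeneous representation identifies $X$ with $G/G_0$ under the operation twisted by conjugation by $L_0$. I would then invoke Burnside's dichotomy for finite $2$-transitive groups: the socle $N=\mathrm{soc}(G)$ is either a regular elementary abelian group (the affine type) or a nonabelian simple group (the almost simple type).

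In the affine case $N=V\cong\mathbb Z_q^n$ is a regular normal subgroup of $G$, so I identify $X$ with $V$, and $\alpha:=L_0\in G_0\le\aut{V}$ is linear. The computation $L_x=T_x\alpha T_{-x}$ gives $x\triangleright y=(1-\alpha)(x)+\alpha(y)$, so $X\cong\gal{V,\alpha}$; moreover $G=\langle L_x\rangle=V\rtimes\langle\alpha\rangle$ with $G_0=\langle\alpha\rangle$, and double transitivity forces $\langle\alpha\rangle$ to be transitive on $V\setminus\{0\}$, whence $|\alpha|=q^n-1$ exactly as in the first paragraph (with $1-\alpha$ invertible, consistent with Proposition~\ref{connected iff latin}). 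The genuine obstacle is ruling out the almost simple case: one must show that none of the almost simple $2$-transitive groups can occur as $\lmlt X$ for a connected quandle, equivalently that none is generated by a conjugacy class $\setof{L_x}{x\in X}$ of left translations with $L_xL_yL_x^{-1}=L_{x\triangleright y}$ and each $L_x$ fixing $x$. I expect this step to rely essentially on the classification of finite $2$-transitive groups and a case-by-case inspection of the almost simple families (the alternating, projective, symplectic, unitary, Suzuki, Ree and sporadic actions), which is why the statement is quoted from \cite{V} rather than proved in full here.
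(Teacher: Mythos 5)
The paper never proves Theorem \ref{Th:DoublyTransitiveQuandle}; it is quoted outright from \cite[Corollary 4]{V}, so there is no internal argument to compare yours against, and your proposal has to be judged as a standalone proof. Judged that way, the direction ($\Leftarrow$) is correct and complete: the Singer-cycle argument (semisimplicity of $\alpha$, the estimate $\prod_i(q^{d_i}-1)<q^n-1$ forcing irreducibility, hence $\alpha$ regular on nonzero vectors and $1-\alpha$ invertible), the identity $L_x=T_x\alpha T_{-x}$, and the identification $\lmlt{X}=V\rtimes\langle\alpha\rangle$ with point stabilizer $\langle\alpha\rangle$ are all sound. The affine half of ($\Rightarrow$) is also right: once the socle $V$ is a regular elementary abelian normal subgroup, conjugating $L_0$ by the translation $n_x\in V$ with $n_x(0)=x$ and using \eqref{CONJ} gives $L_x=n_xL_0n_x^{-1}$, hence $x\triangleright y=(1-\alpha)(x)+\alpha(y)$, $G_0=\langle\alpha\rangle$, and $|\alpha|=|X|-1$ because a transitive cyclic (hence abelian) group is regular.

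The genuine gap is the almost simple case, which you flag but do not close. Your reformulation---that no almost simple $2$-transitive group is generated by a conjugacy class of translations $L_x$ with $L_x(x)=x$ satisfying \eqref{CONJ}---is correct but is not in a form that a case-by-case inspection can realistically verify; as written, the last paragraph is a statement of intent rather than a proof. The missing idea, and the one \cite{V} actually uses, comes again from \eqref{CONJ}: every $\varphi\in G_x$ satisfies $\varphi L_x\varphi^{-1}=L_{\varphi(x)}=L_x$, so $L_x\in Z(G_x)$; moreover $L_x\neq 1$, since otherwise conjugacy of the left translations under the transitive action of $\lmlt{X}$ would force $L_y=1$ for all $y$, contradicting double transitivity. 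Thus $\lmlt{X}$ is a $2$-transitive group whose point stabilizers have \emph{nontrivial center}, and the content of \cite{V} that does require the classification of finite $2$-transitive groups is precisely that point stabilizers of $2$-transitive groups of almost simple type have trivial center. With this reduction to a checkable group-theoretic invariant your outline matches the actual structure of Vendramin's argument; without it, the crucial half of the theorem remains unproved.
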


\begin{lemma}\label{Lm:DoublyTransitiveIdempotentGroupoid}
Let $X$ be a finite idempotent groupoid with a doubly transitive automorphism group. Then $X$ is semiregular and the parameter $s$ (length of any nontrivial orbit of any left translation) is a divisor of $|X|-1$.
\end{lemma}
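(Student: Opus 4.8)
The plan is to exploit the relation $\varphi L_x\varphi^{-1}=L_{\varphi(x)}$, valid for every $\varphi\in\aut{X}$ and every $x\in X$ in an arbitrary groupoid, together with the double transitivity of $G:=\aut{X}$. Write $G_x$ for the stabilizer of $x$ in $G$. Since $G$ is doubly transitive, $G_x$ acts transitively on $X\setminus\{x\}$. For $\varphi\in G_x$ the relation specializes to $\varphi L_x\varphi^{-1}=L_{\varphi(x)}=L_x$, so every element of $G_x$ commutes with $L_x$; equivalently, $L_x$ is $G_x$-equivariant, $L_x(\varphi(y))=\varphi(L_x(y))$ for all $\varphi\in G_x$ and $y\in X$. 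Idempotence gives $L_x(x)=x$, so $x$ is a fixed point of $L_x$.

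First I would establish a dichotomy for the fixed element $x$: either $L_x(y)=x$ for every $y\ne x$, or $L_x$ restricts to a permutation of $X\setminus\{x\}$ and hence $L_x\in\sym{X}$. Indeed, for $y$, $z\in X\setminus\{x\}$ choose $\varphi\in G_x$ with $\varphi(y)=z$; then $L_x(z)=\varphi(L_x(y))$, and since $\varphi$ is a bijection fixing $x$ we get $L_x(z)=x$ if and only if $L_x(y)=x$. Thus the condition ``$L_x(y)=x$'' holds either for all $y\ne x$ or for none. In the latter case $L_x(X\setminus\{x\})$ is a nonempty $G_x$-invariant subset of the transitive $G_x$-set $X\setminus\{x\}$, hence equals $X\setminus\{x\}$, and finiteness forces $L_x$ to be a bijection of $X\setminus\{x\}$.

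In the degenerate branch $L_x$ collapses $X\setminus\{x\}$ onto $x$ and so has no nontrivial cycle; conjugating by elements of the transitive group $G$ shows every $L_z$ is of the same type, so semiregularity holds vacuously. The substantive branch is $L_x\in\sym{X}$. Here I would use equivariance once more to control orbit lengths: for $z=\varphi(y)$ with $\varphi\in G_x$ one has $L_x^k(z)=\varphi(L_x^k(y))$ for all $k$, so the $\langle L_x\rangle$-orbit of $z$ is the $\varphi$-image of the $\langle L_x\rangle$-orbit of $y$ and in particular has the same length. As $G_x$ is transitive on $X\setminus\{x\}$, all $\langle L_x\rangle$-orbits on $X\setminus\{x\}$ share a common length $s$, so $L_x$ is semiregular; since these orbits partition $X\setminus\{x\}$, the number $s$ divides $|X|-1$.

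Finally, to pass from a single $L_x$ to the whole groupoid I would invoke transitivity of $G$: every $z\in X$ equals $\varphi(x)$ for some $\varphi\in G$, whence $L_z=\varphi L_x\varphi^{-1}$ has the same cycle type as $L_x$. Therefore all nontrivial cycles of all left translations have the common length $s$, so $X$ is semiregular with $s\mid(|X|-1)$. The only genuine obstacle is the dichotomy step, namely showing that $L_x$ is a bona fide permutation rather than some non-injective self-map; it is precisely here that \emph{double} transitivity (transitivity of $G_x$ on $X\setminus\{x\}$), and not merely transitivity of $G$, is essential. Once equivariance under the transitive action of $G_x$ is in hand, the remaining orbit-length bookkeeping is routine.
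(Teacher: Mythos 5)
Your proof is correct, and it runs on the same engine as the paper's --- the identity $\varphi L_x\varphi^{-1}=L_{\varphi(x)}$ for $\varphi\in\aut{X}$ combined with double transitivity --- but it packages double transitivity differently, so the two arguments are worth contrasting. The paper uses double transitivity in one shot, on pairs: given $x\ne y$ with $L_x^k(y)=y$ and an arbitrary pair $v\ne w$, it picks $\varphi\in\aut{X}$ with $\varphi(x)=v$, $\varphi(y)=w$, and computes $w=\varphi L_x^k(y)=L_{\varphi(x)}^k(\varphi(y))=L_v^k(w)$. That single transported identity does all the work at once: it equalizes cycle lengths within one translation and across all translations simultaneously, and it shows that as soon as one pair is periodic, every $w\ne v$ is periodic under $L_v$ --- which is exactly what forces each $L_v$ to permute $X\setminus\{v\}$ so that its cycles partition it and $s\mid |X|-1$. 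You instead split double transitivity into its two classical constituents: transitivity of the stabilizer $G_x$ on $X\setminus\{x\}$ (yielding $G_x$-equivariance of $L_x$, your collapse-or-permutation dichotomy, and equal orbit lengths for the single map $L_x$), followed by plain transitivity of $\aut{X}$ (conjugating $L_x$ onto every $L_z$ to spread the common cycle type). The dividend of your longer route is that it makes explicit what the paper leaves tacit: in an idempotent groupoid $L_x$ need not be injective a priori, and your dichotomy cleanly isolates the degenerate case --- realized, for instance, by the left projection $x\cdot y=x$, whose automorphism group is all of $\sym{X}$ --- in which semiregularity holds vacuously, from the case where $L_x$ genuinely permutes $X\setminus\{x\}$. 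The paper handles this point implicitly (if no pair is periodic there is nothing to prove; if one is, the transported identity makes every point periodic), so both arguments are complete; the paper's is simply much shorter.
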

\begin{proof}
For any $x\in X$ we have $L_x(x)=x$ by idempotence. Given $x\ne y$ with $L_x^k(y)=y$ and some $v\ne w$, let $\varphi\in\aut{X}$ be such that $\varphi(x)=v$, $\varphi(y)=w$. Then $w = \varphi(y) = \varphi L_x^k(y) = L_{\varphi(x)}^k(\varphi(y)) = L_v^k(w)$. Hence $X$ is semiregular, and the rest follows.
\end{proof}

Combining Theorem \ref{Th:DoublyTransitiveQuandle} and Lemma \ref{Lm:DoublyTransitiveIdempotentGroupoid}, we see that if $X\cong\gal{\mathbb Z_q^n,\alpha}$ is a finite doubly transitive quandle, then $\alpha$ is an $(|X|-1)$-cycle (since $\alpha$ cannot be trivial by Proposition \ref{connected iff latin}).

\begin{proposition}\label{same F}
Let $X$ be a finite doubly transitive quandle. Then there is an integer $F>1$ such that
\begin{displaymath}
    |O_f(x,y)| = \left\{
    \begin{array}{rr}
        1,&\text{ if $x=y/0$},\\
        F,&\text{ otherwise}.
    \end{array}\right.
\end{displaymath}
\end{proposition}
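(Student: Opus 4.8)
The plan is to pass to the affine representation and reduce the orbit size to a function of a single variable. By Theorem~\ref{Th:DoublyTransitiveQuandle} and the remark following it, I may assume $X=\gal{\mathbb Z_q^n,\alpha}$ with $u=0$, where $\alpha$ is an $(|X|-1)$-cycle; in particular $\alpha$ fixes $0$ and permutes the $|X|-1$ nonzero elements of $\mathbb Z_q^n$ in a single cycle, hence acts transitively on them. By Proposition~\ref{F affine}(i), for every $x$, $y\in X$ we have
\begin{displaymath}
    |O_f(x,y)| = \min\setof{m\in\mathbb N}{\textstyle\sum_{j=1}^m (-1)^j\alpha^j(w)=0},
\end{displaymath}
where $w=x-y/0$. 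Thus $|O_f(x,y)|$ depends only on $w$, and I will write $F(w)$ for this value; the minimum is attained because $f$ is a permutation of the finite set $X\times X$, so every $f$-orbit is finite.

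First I would dispose of the degenerate case. By Proposition~\ref{Prop:orbit sizes of f}(i), $|O_f(x,y)|=1$ precisely when $y=x\cdot 0$, equivalently $x=y/0$, equivalently $w=0$. This accounts for the first line of the claimed formula, and it remains to show that $F(w)$ equals one and the same value $F>1$ for all nonzero $w$.

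The crux is that the defining sum transforms covariantly under $\alpha$. Since $\alpha$ is a group automorphism, for any $w$ and any $t$ we have
\begin{displaymath}
    \sum_{j=1}^m (-1)^j\alpha^j(\alpha^t(w)) = \alpha^t\Bigl(\sum_{j=1}^m (-1)^j\alpha^j(w)\Bigr),
\end{displaymath}
and because $\alpha^t$ is a bijection the right-hand side vanishes if and only if $\sum_{j=1}^m (-1)^j\alpha^j(w)=0$. Hence $F(\alpha^t(w))=F(w)$ for all $t$. As $\alpha$ acts transitively on the nonzero elements, every nonzero $w'$ is of the form $\alpha^t(w)$, so $F$ is constant on $\mathbb Z_q^n\setminus\{0\}$; call this common value $F$. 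Finally $F\ge 2$, for $F=1$ would force $-\alpha(w)=0$ for some nonzero $w$, contradicting injectivity of $\alpha$. The only point demanding care is the covariance identity together with the observation that transitivity of $\alpha$ on nonzero vectors is exactly what links all the orbit sizes; once these are in place the conclusion is immediate, so I anticipate no serious obstacle.
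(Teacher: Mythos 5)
Your proof is correct, and it is not circular: everything you invoke --- Theorem \ref{Th:DoublyTransitiveQuandle}, the observation that $\alpha$ is an $(|X|-1)$-cycle, and Proposition \ref{F affine}(i) --- appears before this proposition in the paper. But your route is genuinely different from the paper's. The paper does not use the affine formula at all: it works intrinsically from double transitivity of $\lmlt{X}$. Given $(x,y)$ and $(v,w)$ with $x\ne y/0$ and $v\ne w/0$, it picks $\varphi\in\lmlt{X}$ with $\varphi(x)=v$ and $\varphi(y/0)=w/0$, and conjugates the fixed-point equation $(L_xL_{y/0})^{k/2}(x)=x$ (from Lemma \ref{f orbits} and Proposition \ref{Prop:orbit sizes of f}) through the identity $\varphi L_z\varphi^{-1}=L_{\varphi(z)}$ to get $|O_f(v,w)|\le |O_f(x,y)|$; symmetry then gives equality. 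You instead reduce $|O_f(x,y)|$ to a function $F(w)$ of the single difference $w=x-y/0$ via Proposition \ref{F affine}(i), and exploit $\alpha$-equivariance of the defining sum together with transitivity of $\langle\alpha\rangle=\langle L_0\rangle$ on the nonzero vectors. What your version buys: once the machinery of Section \ref{Sc:OrbitsOnAffine} is granted, the argument is a transparent computation; it uses only transitivity of a single left translation on $X\setminus\{0\}$ rather than full double transitivity of $\lmlt{X}$; and it makes $F>1$ explicit, which the paper leaves implicit (there it follows from Proposition \ref{Prop:orbit sizes of f}(i), since orbit size $1$ forces $x=y/0$). What the paper's version buys: it is representation-free, needing neither the classification theorem quoted from Vendramin nor the cycle structure of $\alpha$, and the same conjugation argument works verbatim in any finite latin quandle whose left multiplication group is doubly transitive.
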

\begin{proof}
By Proposition \ref{Prop:orbit sizes of f}, $|O_f(y/0,y)|=1$. Suppose that $x$, $y$, $v$, $w\in X$ are such that $x\ne y/0$ and $v\ne w/0$. Let $\varphi\in\lmlt{X}\le\aut{X}$ be such that $\varphi(x)=v$ and $\varphi(y/0)=w/0$. Suppose that $|O_f(x,y)|=k$ is even, the odd case being similar. By Lemma \ref{f orbits} and Proposition \ref{Prop:orbit sizes of f}, we have $(L_x L_{y/u})^{k/2}(x)=x$. Then $v=\varphi(x) = \varphi(L_xL_{y/0})^{k/2}(x) = (L_vL_{w/0})^{k/2}(v)$, and thus $|O_f(v,w)|\le k$.
\end{proof}

\begin{corollary}\label{F for 2trans}
Let $X=\gal{\mathbb Z_q^n,\alpha}$ be a doubly transitive quandle. Then
\begin{enumerate}
\item[(i)] $\mathcal O_g^0$ and $\mathcal O_g^f$ are singletons,
\item[(ii)] $(\bigcup\mathcal O_g^0)\cap(\bigcup\mathcal O_g^f)=\emptyset$ if $|X|>3$,
\item[(iii)] if $q=2$ then $F=|X|-1$,
\item[(iv)] if $q>2$ then $F=|X|-1$ if it is even and $F=(|X|-1)/2$ if it is odd.
\end{enumerate}
\end{corollary}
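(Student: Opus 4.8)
The common engine for all four parts is the fact, recorded just after Lemma \ref{Lm:DoublyTransitiveIdempotentGroupoid}, that $L_0=\alpha$ is a single $(|X|-1)$-cycle; equivalently $O_{L_0}(x)=X\setminus\{0\}$ for every nonzero $x$. The plan is to feed this into the orbit descriptions of the previous two sections.

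For (i), Lemma \ref{Lm:some orbits of g} gives $O_g(x,x\cdot 0)=\setof{(y,y\cdot 0)}{y\in O_{L_0}(x)}$ and $O_g(x,x\ldiv 0)=\setof{(y,y\ldiv 0)}{y\in O_{L_0}(x)}$. Since $O_{L_0}(x)=X\setminus\{0\}$ is the same set for every nonzero $x$, each of these orbits is independent of the chosen representative, so $\mathcal O_g^f$ and $\mathcal O_g^0$ each collapse to a single $g$-orbit. For (ii), the last sentence of Lemma \ref{Lm:some orbits of g} says that an orbit lies in $\mathcal O_g^0\cap\mathcal O_g^f$ precisely when $y=x\cdot 0$ and $x\cdot(x\cdot 0)=0$ for some nonzero $x$. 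Writing $L_x(w)=(1-\alpha)(x)+\alpha(w)$ turns $x\cdot(x\cdot 0)=0$ into $(1-\alpha^2)(x)=0$, i.e. $\alpha^2(x)=x$. As $\alpha$ is an $(|X|-1)$-cycle, every nonzero $x$ has $\alpha$-order $|X|-1$, so $\alpha^2(x)=x$ forces $|X|-1\mid 2$, hence $|X|\le 3$. For $|X|>3$ the two collections therefore share no orbit, and since distinct $g$-orbits are disjoint we conclude $(\bigcup\mathcal O_g^0)\cap(\bigcup\mathcal O_g^f)=\emptyset$.

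Parts (iii) and (iv) compute the constant $F$ of Proposition \ref{same F}, so I would fix $x\ne y/0$ and put $z=x-y/0\ne 0$, noting $|O_{L_0}(z)|=|X|-1$. When $q=2$ we have $2z=0$, so Proposition \ref{F affine}(iv) yields $F=|O_{L_0}(z)|=|X|-1$, which is (iii). When $q>2$, the order $|X|=q^n$ is odd, so $\ell:=|X|-1$ is even; this is exactly the hypothesis of Lemma \ref{Lem: length F even case}, which gives $F=k\ell$ if $F$ is even and $F=k'\ell/2$ if $F$ is odd, for integers $k,k'\ge 1$. I would then pin these down with the crude bound $F\le|X|=\ell+1$ from Proposition \ref{Prop:orbit sizes of f}(iii): in the even case $k\ell\le\ell+1$ forces $k=1$, giving $F=|X|-1$; in the odd case $k'\ell/2\le\ell+1$ forces $k'\le 2$ as soon as $\ell\ge 4$, and $k'=2$ is impossible since it would make $F=\ell$ even, so $k'=1$ and $F=(|X|-1)/2$.

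The one delicate point, and the main obstacle, is this last reduction: the size bound only eliminates $k'\ge 2$ when $\ell\ge 4$, that is, when $|X|>3$. For $q>2$ the sole excluded value is $|X|=3$ (where in fact $F=3=|X|$), but that quandle has prime order and is already simply connected by Proposition \ref{Prop:cohomology Z_p}, so part (iv) is only ever invoked for $|X|>3$ and no genuine gap arises.
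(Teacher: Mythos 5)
Your parts (i)--(iii) are correct and follow the paper's own route: (i) combines Lemma \ref{Lm:some orbits of g} with the fact that $L_0=\alpha$ is a single $(|X|-1)$-cycle; (ii) reduces the intersection condition $x\cdot(x\cdot 0)=0$ to $\alpha^2(x)=x$ (the paper factors it as $(1+\alpha)(1-\alpha)(x)=0$ and cancels the automorphism $1-\alpha$, which is the same computation); and (iii) is exactly the paper's appeal to Proposition \ref{same F} and Proposition \ref{F affine}(iv).

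Part (iv) is where the comparison is interesting, and your version is in fact more careful than the paper's. The paper asserts that $F\le|X|-1$ ``by Proposition \ref{Prop:orbit sizes of f}'', but that proposition only yields $F\le|X|$, which is the bound you use; this is precisely why you cannot eliminate $k'\ge 3$ when $\ell=|X|-1=2$. Moreover, the exception you flag is genuine and not an artifact of your method: for the doubly transitive quandle $\gal{\mathbb Z_3,\alpha}$ (allowed by the hypotheses, $q=3$, $n=1$) one has $1+\alpha=0$, so all three fixed points $(w,w\cdot 0)$ of $f$ lie in the single fiber $p^{-1}(0)$; each of the other two fibers contains no fixed point, hence is partitioned into $f$-orbits of the common length $F$, forcing $F\in\{1,3\}$, and since $F>1$ we get $F=3=|X|$. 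So part (iv) as literally stated, and the paper's intermediate bound $F\le|X|-1$, are false at $|X|=3$; your argument proves (iv) in exactly the cases where it is true. Your closing remark is also accurate: the paper invokes (iv) only inside Theorem \ref{odd_2trans}, after the case $n=1$ has been dispatched by Theorem \ref{Teo: H(Z_m)=1}, hence only for $|X|=q^n\ge 9$. Finally, if you want to recover the paper's cleaner bound for $|X|>3$, observe that $\{(0,0)\}\cup\bigcup\mathcal O_g^0$ is the fiber $p^{-1}(0)$, a union of $f$-orbits, and by your part (ii) no point of $\bigcup\mathcal O_g^0$ is fixed by $f$ when $|X|>3$; hence those orbits have length $F\le|X|-1$, which shortens the case analysis to the one the paper intended.
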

\begin{proof}
(i) Since $\alpha=L_0$ is an $(|X|-1)$-cycle, we have $|O_g(x,y)|=|X|-1$ whenever $(x,y)\ne(0,0)$ by Proposition \ref{Prop:orbit sizes of g}. The claim follows by Lemma \ref{Lm:some orbits of g}.

(ii) By Lemma \ref{Lm:some orbits of g}, $(x,y)\in (\bigcup\mathcal O_g^0)\cap(\bigcup\mathcal O_g^f)$ if and only if $x\cdot (x\cdot 0)=0$ and $y=x\cdot 0$. We have $x\cdot (x\cdot 0) = (1+\alpha)(1-\alpha)(x)$. Since $1-\alpha\in\aut{A}$, we have $x\cdot (x\cdot 0)=0$ if and only if $\alpha(x)=-x$. Then $\alpha^2(x)=x$, so $|X|-1=|\alpha|\le 2$.

(iii) According to Proposition \ref{same F}, it suffices to compute the length of an $f$-orbit for some $(x,y)$ with $y\ne x\cdot 0$. We have $2x=0$ for every $x\in X$, and hence $F=|X|-1$ by Proposition \ref{F affine}(iv).

(iv) Since $q>2$, $|O_{L_0}(x)|=|X|-1$ is even for every $x\in X$. By Proposition \ref{Prop:orbit sizes of f}, $F\le |X|-1$. If $F$ is even then $F=k(|X|-1)$ by Lemma \ref{Lem: length F even case} and thus $F=|X|-1$. If $F$ is odd then the same lemma yields $F = k(|X|-1)/2$, the case $F=|X|-1$ cannot occur since $|X|-1$ is even, and thus $F=(|X|-1)/2$.
\end{proof}

\begin{lemma}\label{Lm:Technical}
Let $X=\gal{\mathbb Z_q^n,\alpha}$ be a doubly transitive quandle and $\beta$ a $0$-normalized cocycle.
\begin{enumerate}
\item[(i)] If $F=|X|-1$ then $\beta(x,y)=1$ for every $(x,y)\not\in\bigcup (\mathcal O_g^f\cup\mathcal O_g^0)$.
\item[(ii)] If $\beta(x,y)=1$ for every $(x,y)\not\in\bigcup (\mathcal O_g^f\cup\mathcal O_g^0)$ and, in addition, there is $0\ne z\in X$ such that $\beta(z,z\ldiv 0)=1$, then $\beta=1$.
\end{enumerate}
\end{lemma}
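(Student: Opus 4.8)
The plan is to push the single known value $\beta(x,0)=1$ (normalization) across all of $X\times X$ using the invariance of $\beta$ under $f$, $g$, $h$ (Propositions \ref{Prop:invariance under f}, \ref{Prop:invariance_under_g}, \ref{Prop:invariance under left action}) together with the extremely rigid orbit structure forced by double transitivity. I would work throughout with the induced action of $\langle f,h\rangle$ on the set $\mathcal O_g$ of $g$-orbits (Proposition \ref{Pr:InducedAction}).

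For (i), since $\alpha=L_0$ is an $(|X|-1)$-cycle, Proposition \ref{Prop:orbit sizes of g} gives $|O_g(x,y)|=|X|-1$ for every $(x,y)\neq(0,0)$, so there are exactly $|X|+1$ nontrivial $g$-orbits. Among them $\mathcal O_g^f$ and $\mathcal O_g^0$ are two distinct singletons (Corollary \ref{F for 2trans}(i),(ii); the degenerate coincidence is discussed below), each fixed by $f$ by Corollary \ref{action of omega on diag}(iii),(iv), so $f$ permutes the remaining $|X|-1$ $g$-orbits among themselves. Now the slice $G_0=\{(x,0): x\neq 0\}=O_g(x,0)$ is one of these remaining orbits, and as $X$ is semiregular (Lemma \ref{Lm:DoublyTransitiveIdempotentGroupoid}), Lemma \ref{F on Delta} together with Proposition \ref{same F} gives $|O_f(G_0)|=|O_f(x,0)|=F=|X|-1$ (here $x\neq 0=0\ldiv 0$). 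Hence the $f$-orbit of $G_0$ exhausts all $|X|-1$ remaining $g$-orbits. Since $\beta$ is constant on $g$-orbits and $f$-invariant, and $\beta\equiv 1$ on $G_0$ by normalization, $\beta\equiv 1$ on every $g$-orbit outside $\mathcal O_g^f\cup\mathcal O_g^0$; together with $\beta(0,0)=1$ from \eqref{CQC} this is exactly the asserted vanishing.

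For (ii), observe first that $\mathcal O_g^0$ is a single $g$-orbit (Corollary \ref{F for 2trans}(i)) containing $(z,z\ldiv 0)$, so the hypothesis $\beta(z,z\ldiv 0)=1$ and $g$-invariance force $\beta\equiv 1$ on all of $\bigcup\mathcal O_g^0$. The only region left is $\bigcup\mathcal O_g^f$, and here I would invoke $h$: for $x\neq 0$ the $g$-orbit $O_g(h(x,x\cdot 0))=h(O_g(x,x\cdot 0))$ lies in $h(\mathcal O_g^f)$, which is disjoint from $\mathcal O_g^f$ by Corollary \ref{action of omega on diag}(ii), so $h(x,x\cdot 0)\notin\bigcup\mathcal O_g^f$. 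Thus $h(x,x\cdot 0)$ lies either in $\bigcup\mathcal O_g^0$ or off $\bigcup(\mathcal O_g^f\cup\mathcal O_g^0)$, in both cases a region where $\beta\equiv 1$ is already known, and $h$-invariance yields $\beta(x,x\cdot 0)=1$. With $\beta(0,0)=1$ this gives $\beta\equiv 1$ on $\bigcup\mathcal O_g^f$, hence $\beta=1$.

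The main obstacle is the counting step in (i): to conclude that the ``generic'' $g$-orbits constitute a single $f$-orbit one needs both the exact number $|X|+1$ of nontrivial $g$-orbits and the exact common length $F=|X|-1$, which is precisely why the hypothesis $F=|X|-1$ and the full-cycle structure of $\alpha$ are indispensable. It also introduces the mild caveat that the count ``$|X|-1$ remaining orbits'' presupposes $\mathcal O_g^f\neq\mathcal O_g^0$; by Corollary \ref{F for 2trans}(ii) this can fail only when $|X|=3$, a case already covered by Theorem \ref{Teo: H(Z_m)=1}, or disposed of directly since there the single stray orbit lies in the line $x=0$, on which $\beta\equiv 1$ by Lemma \ref{Lm:LatinEq} and $g$-invariance.
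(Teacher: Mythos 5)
Your proposal is correct, and part (ii) is essentially identical to the paper's proof: triviality on $\bigcup\mathcal O_g^0$ follows from the hypothesis plus constancy of $\beta$ on $g$-orbits, and then Corollary \ref{action of omega on diag}(ii) shows $h$ maps $\bigcup\mathcal O_g^f$ into the region where $\beta$ is already known to be $1$.

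Part (i), however, is proved by a genuinely different route. The paper argues locally on fibers of the product map $p$: for $(0,0)\ne(x,y)\notin\bigcup(\mathcal O_g^f\cup\mathcal O_g^0)$ the fiber $p^{-1}(xy)$ has $|X|$ elements, is a union of $f$-orbits, and contains the diagonal point $(xy,xy)$; since neither $(x,y)$ nor $(xy,xy)$ is an $f$-fixed point and nontrivial $f$-orbits have length $F=|X|-1$, the two points must lie in the \emph{same} $f$-orbit, so $\beta(x,y)=\beta(xy,xy)=1$ by \eqref{CQC}. You instead count $g$-orbits globally: there are $|X|+1$ nontrivial ones, $f$ fixes the two singletons $\mathcal O_g^f$ and $\mathcal O_g^0$, and the induced $f$-orbit of the slice $O_g(x,0)$ has length $F=|X|-1$ (via semiregularity, Lemma \ref{F on Delta} and Proposition \ref{same F}), so it sweeps out all remaining $g$-orbits, onto which the normalization $\beta(x,0)=1$ then propagates. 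The paper's argument is more economical: it needs neither the orbit count, nor semiregularity, nor Lemma \ref{F on Delta}, and it is insensitive to whether $\mathcal O_g^f$ and $\mathcal O_g^0$ coincide; yours buys an explicit global picture of the $\langle f,g\rangle$-orbit structure, which is essentially what the paper develops separately inside the proof of Theorem \ref{odd_2trans}. One small flaw in your caveat for $|X|=3$: the claim that ``the single stray orbit lies in the line $x=0$'' is not justified, and in fact for $|X|=3$ one has $F=3\ne|X|-1$ (every $f$-fixed point $(a,a\cdot 0)$ satisfies $a(a\cdot 0)=(1-\alpha^2)(a)=0$ there, so fibers over nonzero elements contain no fixed points and form single $f$-orbits of length $3$); thus statement (i) is vacuous in that case, and your primary fallback to Theorem \ref{Teo: H(Z_m)=1} also disposes of it. This does not affect the correctness of your argument.
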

\begin{proof}
(i) We have $\beta(x,x)=1$ for any $x\in X$. Suppose that $(0,0)\ne (x,y)\in (X\times X)\setminus \bigcup(\mathcal O_g^f\cup\mathcal O_g^0)$. Then $(x,y)$ is not a fixed point of $f$, and neither is $(xy,xy)$, since $xy\ne 0$. The fiber $p^{-1}(xy)$ contains both $(x,y)$ and $(xy,xy)$, and it is a union of $f$-orbits. By assumption, one of the orbits has size $|F|-1$, forcing the remaining element of $p^{-1}(xy)$ to be a fixed point of $f$. Hence $(x,y)\in O_f(xy,xy)$. Then $\beta(xy,xy)=1$ implies $\beta(x,y)=1$.

(ii) Suppose that $\beta(x,y)=1$ for every $(x,y)\not\in\bigcup (\mathcal O_g^f\cup\mathcal O_g^0)$, and $\beta(z,z\ldiv 0)=1$ for some $0\ne z\in X$. Then $\beta=1$ on $\bigcup\mathcal O_g^0$ since $\mathcal O_g^0$ is a singleton by Corollary \ref{F for 2trans}. Finally, let $(x,y)\in\bigcup\mathcal O_g^f$. By Corollary \ref{action of omega on diag}, $h(x,y)\not\in \bigcup\mathcal O_g^f$ and thus $\beta(x,y)=\beta(h(x,y))=1$.
\end{proof}

\begin{lemma}\label{Lemma: affine order 2}
Let $X=\gal{ A,\alpha }$ be a latin affine quandle and $0\ne x\in X$. Then
\begin{enumerate}
\item[(i)] $h(x,x\ldiv 0)\in\bigcup\mathcal O_g^f$ if and only if $2x=0$,
\item[(ii)] $O_g( 0/ ( 0/x ) ,x ) \in\mathcal  O_g^{f} $ if and only if $ ( \alpha ^{2}+\alpha -1) ( x) =0$.
\end{enumerate}
\end{lemma}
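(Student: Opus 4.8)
The plan is to translate everything into the affine representation and reduce both claims to a single linear equation in $\alpha$. Throughout I set $u=0$ and use $x\cdot y=(1-\alpha)(x)+\alpha(y)$, so that $L_0=\alpha$, $x\cdot 0=(1-\alpha)(x)$, and, by Proposition \ref{connected iff latin}, $1-\alpha\in\aut A$. From the right-division formula $a/b=(1-\alpha)^{-1}(a-\alpha(b))$ one reads off $x\ldiv 0=(1-\alpha^{-1})(x)$ and $0/x=-(1-\alpha)^{-1}\alpha(x)$. The key reduction is that, by Lemma \ref{Lm:some orbits of g}(i), $\bigcup\mathcal O_g^f=\setof{(y,y\cdot 0)}{0\ne y\in X}=\setof{(y,(1-\alpha)(y))}{0\ne y\in X}$; since the members of $\mathcal O_g^f$ are full $g$-orbits, a pair $(a,b)$ lies in some orbit of $\mathcal O_g^f$ (equivalently $O_g(a,b)\in\mathcal O_g^f$) precisely when $a\ne 0$ and $b=(1-\alpha)(a)$.

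For (i), I would use $h(x,y)=(y+x,y)$ from Lemma \ref{geometric prog} to get
\[
    h(x,x\ldiv 0)=\bigl((1-\alpha^{-1})(x)+x,\,(1-\alpha^{-1})(x)\bigr)=\bigl(2x-\alpha^{-1}(x),\,x-\alpha^{-1}(x)\bigr).
\]
By the criterion above, this lies in $\bigcup\mathcal O_g^f$ exactly when $x-\alpha^{-1}(x)=(1-\alpha)\bigl(2x-\alpha^{-1}(x)\bigr)$ and the first coordinate is nonzero. Expanding the right-hand side and cancelling reduces the displayed equality to $2(1-\alpha)(x)=0$; since $1-\alpha\in\aut A$, this is equivalent to $2x=0$. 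The nonzero condition is automatic: when $2x=0$ the first coordinate equals $-\alpha^{-1}(x)$, which vanishes only if $x=0$.

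For (ii), I would first compute $0/(0/x)$ by applying the right-division formula twice; because $\alpha$ and $(1-\alpha)^{-1}$ commute, this collapses to $0/(0/x)=\alpha^2(1-\alpha)^{-2}(x)$, call it $w$. By the same orbit criterion, $O_g(w,x)\in\mathcal O_g^f$ iff $w\ne 0$ and $x=w\cdot 0=(1-\alpha)(w)=\alpha^2(1-\alpha)^{-1}(x)$. Applying the automorphism $1-\alpha$ to the last equation turns it into $(1-\alpha)(x)=\alpha^2(x)$, that is, $(\alpha^2+\alpha-1)(x)=0$, while $w\ne 0$ holds automatically for $x\ne 0$.

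I expect no genuine obstacle here: both statements are pure linear-algebra identities in the endomorphism subring generated by $\alpha$, and everything is driven by the invertibility of $1-\alpha$. The only points requiring care are the bookkeeping translation of ``$O_g(\cdots)\in\mathcal O_g^f$'' into the concrete equation $b=(1-\alpha)(a)$ via Lemma \ref{Lm:some orbits of g}, and verifying the side condition that the relevant first coordinate is nonzero, so that one genuinely lands in $\mathcal O_g^f$ rather than at the fixed point $(0,0)$.
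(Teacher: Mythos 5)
Your proof is correct and follows essentially the same route as the paper: both arguments translate everything into the affine representation, use $h(x,y)=(y+x,y)$ from Lemma \ref{geometric prog} and the characterization of $\bigcup\mathcal O_g^f$ from Lemma \ref{Lm:some orbits of g}, and reduce each claim to a linear identity in $\alpha$ settled by the invertibility of $1-\alpha$. The only cosmetic difference is in part (ii), where the paper avoids computing $0/(0/x)$ explicitly by rewriting the membership condition as $x/0\cdot 0/x=0$, whereas you compute $0/(0/x)=\alpha^2(1-\alpha)^{-2}(x)$ directly; your additional verification that the relevant first coordinates are nonzero is a detail the paper leaves implicit.
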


\begin{proof}
(i) Recall that $(x,y)\in\bigcup\mathcal O_g^f$ if and only if $y=x\cdot 0$. We have $h(x,x\ldiv 0)=(x+x\ldiv 0,x\ldiv 0)$ and $x\ldiv 0=(1-\alpha^{-1})(x)$. Then $h(x,x\ldiv 0)\in \mathcal O_g^f$ if and only if $(x+x\ldiv 0)\cdot 0 = x\ldiv 0$, which is equivalent to $(1-\alpha)((1-\alpha^{-1})(x)+x) = (1-\alpha^{-1})(x)$. This is easily seen to be equivalent to $(1-\alpha)(2x)=0$. Since $1-\alpha\in\aut{A}$, the last condition is equivalent to $2x=0$.

(ii) Note that $(0/(0/x))\cdot 0 =x$ is equivalent to $x/0\cdot 0/x  = 0$. Also note that $x/0=(1-\alpha)^{-1}(x)$ and $0/x=-\alpha( 1-\alpha  ) ^{-1} (x)$. Then $x/0\cdot 0/x  = 0$ holds if and only if $x-\alpha^2(1-\alpha)^{-1}(x) = 0$, which is equivalent to $(\alpha^2+\alpha-1)(x)=0$.
\end{proof}

\begin{theorem}\label{odd_2trans}
Let $X=\gal{\mathbb{Z}_{q}^{n},\alpha} $ be a doubly transitive quandle with $q\ge 3$. Then $X$ is simply connected.
\end{theorem}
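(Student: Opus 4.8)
The plan is to invoke Corollary \ref{trivial cohomology and normalized}, which reduces the claim to the following: for every nonempty set $S$ and every $0$-normalized cocycle $\beta\in Z^2_c(X,\sym{S})$ we must show $\beta=1$. Throughout I would exploit that $\beta$ is invariant under $f$, $g$ and $h$ (Propositions \ref{Prop:invariance under f}, \ref{Prop:invariance_under_g}, \ref{Prop:invariance_under_h}), that $\beta=1$ on the diagonal by \eqref{CQC}, and that $L_0=\alpha$ is an $(|X|-1)$-cycle, so every $g$-orbit other than $\{(0,0)\}$ has length $|X|-1$ and both $\mathcal O_g^0$ and $\mathcal O_g^f$ are singletons (Corollary \ref{F for 2trans}(i)). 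Since $q\ge 3$, the order $|X|=q^n$ is odd, so $|X|-1$ is even and, by Corollary \ref{F for 2trans}(iv), either $F=|X|-1$ or $F=(|X|-1)/2$. I would split along these two cases; the first is short, and the second carries the real content. Note also that $|X|\ne 4$ automatically, since $q$ is odd.

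Suppose first that $F=|X|-1$. Then Lemma \ref{Lm:Technical}(i) already gives $\beta(x,y)=1$ for every $(x,y)\notin\bigcup(\mathcal O_g^f\cup\mathcal O_g^0)$, so by Lemma \ref{Lm:Technical}(ii) it only remains to produce a single $z\ne 0$ with $\beta(z,z\ldiv 0)=1$. Here I would push the point $(z,z\ldiv 0)\in\bigcup\mathcal O_g^0$ out of both exceptional families by a single application of $h$. Since $q$ is odd, $2$ is invertible modulo $q$, so $2z=0$ forces $z=0$; thus Lemma \ref{Lemma: affine order 2}(i) shows $h(z,z\ldiv 0)\notin\bigcup\mathcal O_g^f$ for every $z\ne 0$, while Corollary \ref{action of omega on diag}(i) gives $h(z,z\ldiv 0)\notin\bigcup\mathcal O_g^0$. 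Hence $h(z,z\ldiv 0)\notin\bigcup(\mathcal O_g^f\cup\mathcal O_g^0)$, so $\beta(h(z,z\ldiv 0))=1$, and $h$-invariance yields $\beta(z,z\ldiv 0)=\beta(h(z,z\ldiv 0))=1$. Lemma \ref{Lm:Technical}(ii) then completes this case.

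Now suppose $F=(|X|-1)/2$, the substantive case. Fix $z\ne 0$; its fiber $p^{-1}(z)$ has $|X|$ elements and is a union of $f$-orbits. It contains exactly one $f$-fixed point, which lies in $\bigcup\mathcal O_g^f$, and, because $F=(|X|-1)/2$, the remaining $|X|-1$ points split into exactly two $f$-orbits of length $F$. By Proposition \ref{Prop:orbit sizes of f}(i) the diagonal point $(z,z)$ is not $f$-fixed for $z\ne 0$ (as $z\cdot 0\ne z$), so it lies in one of these two orbits. Collecting, over all $z\ne 0$, the $f$-orbit through $(z,z)$ into a family $\bigcup\mathcal O_g^1$ and the complementary $f$-orbits into $\bigcup\mathcal O_g^2$, I obtain a partition of $X\times X$ into the five $\langle f,g\rangle$-invariant blocks $\{(0,0)\}$, $\bigcup\mathcal O_g^0$, $\bigcup\mathcal O_g^f$, $\bigcup\mathcal O_g^1$ and $\bigcup\mathcal O_g^2$. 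By $f$- and $g$-invariance of $\beta$ we already have $\beta=1$ on $\{(0,0)\}$ and on $\bigcup\mathcal O_g^1$ (the latter contains the diagonal), so it remains to prove $\beta=1$ on $\bigcup\mathcal O_g^0$, $\bigcup\mathcal O_g^f$ and $\bigcup\mathcal O_g^2$.

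For these three blocks the plan is to use $h$ as a bridge. Corollary \ref{action of omega on diag} shows that $h$ carries $\bigcup\mathcal O_g^0$ off itself and $\bigcup\mathcal O_g^f$ off itself, and since $q$ is odd Lemma \ref{Lemma: affine order 2}(i) refines this to $h(\bigcup\mathcal O_g^0)\subseteq\bigcup(\mathcal O_g^1\cup\mathcal O_g^2)$ (the image avoids $(0,0)$ because its second coordinate $z\ldiv 0$ is nonzero for $z\ne 0$). Lemma \ref{Lemma: affine order 2}(ii), together with the identity $\beta(0/(0/x),x)=\beta(0/x,x)$ of Lemma \ref{some prop of beta} and the fact that $(0/x,x)\in\bigcup\mathcal O_g^0$, provides a further link between the values of $\beta$ on $\mathcal O_g^f$ and on $\mathcal O_g^0$. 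The goal is to chain these bridges so that each of $\mathcal O_g^0$, $\mathcal O_g^f$ and $\mathcal O_g^2$ is shown to meet the $\langle f,g,h\rangle$-orbit of the diagonal; then the $f$-, $g$- and $h$-invariance of $\beta$ forces $\beta=1$ everywhere. The main obstacle is precisely verifying that these applications of $h$ genuinely \emph{merge} the four blocks rather than permute them among themselves, and I expect this to reduce to a careful count of the $h$-orbit lengths for the induced action on $\mathcal O_g$ (Proposition \ref{Pr:InducedAction}). This is exactly the point where $q\ge 3$ is indispensable: it yields $2z=0\Rightarrow z=0$ and eliminates the degenerate coincidences that occur only when $|X|=4$, which is the single order excluded in the companion case $q=2$.
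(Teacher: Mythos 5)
Your normalization step, your entire treatment of the case $F=|X|-1$ (including the use of Lemma \ref{Lemma: affine order 2}(i) and Corollary \ref{action of omega on diag} to push $(z,z\ldiv 0)$ off the two exceptional families by one application of $h$), and your five-block decomposition of $X\times X$ in the case $F=(|X|-1)/2$ all coincide with the paper's argument. But the step you explicitly defer --- ``verifying that these applications of $h$ genuinely merge the four blocks rather than permute them among themselves'' --- is not a routine check that follows from an orbit count; it is the substantive content of the theorem, and you never carry it out. The paper does it as follows: on $\mathcal O_g$ the induced $h$-orbits have length $1$ or $q$ (Lemmas \ref{F on Delta} and \ref{geometric prog}), the only $h$-fixed $g$-orbits are $O_g(0,0)$ and the single orbit $O_g(x,0)$, $x\ne 0$, and therefore any $h$-invariant proper union of the four blocks (whose sizes, as sets of $g$-orbits, are $1$, $1$, $F$, $F$) produces a divisibility constraint: the possible configurations force $q\mid F$, $q\mid F+1$, or simultaneously $q\mid F+2$ and $q\mid F-1$, hence $q\mid 3$. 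Since $q\mid |X|=q^n$ while $F$, $F+1$ are congruent to $(|X|\pm 1)/2$ modulo $q$, the first two are impossible --- so the blocks merge, but only \emph{provided} $q\ne 3$.

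That proviso is exactly where your plan would fail. For $q=3$ the configuration in which $h$ preserves $\mathcal O_g^0\cup\mathcal O_g^f\cup\mathcal O_g^2$ and $\mathcal O_g^1$ separately is arithmetically consistent, so no count of $h$-orbit lengths can rule it out; your closing claim that $q\ge 3$ enters only through ``$2z=0\Rightarrow z=0$'' and the exclusion of $|X|=4$ is therefore mistaken --- the relevant dichotomy inside this proof is $q=3$ versus $q>3$. The paper handles $q=3$ by a genuinely different device: setting $y=0$, $z=x$ in \eqref{CCC} gives $\beta(x\cdot 0,x)=\beta(x,0\cdot x)$, and an explicit affine computation via Lemma \ref{Lm:iterated product} shows that $(x\cdot 0,x)$ and $(x,0\cdot x)$ are never in the same $f$-orbit; hence they lie in the two distinct blocks $\mathcal O_g^1$ and $\mathcal O_g^2$, and triviality of $\beta$ is transferred from one block to the other. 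Without this (or a substitute), your proof is incomplete for $q=3$, and for $q>3$ it remains incomplete until the case analysis above is actually performed. A smaller point: with your diagonal-based choice of $\mathcal O_g^1$, you would also need to locate $O_g(x,0)$ (the unique nontrivial $h$-fixed orbit) and $O_g(0,x)$ (where $\beta=1$ by Lemma \ref{Lm:LatinEq}) relative to your two big blocks before running any such count; the paper's choice $\mathcal O_g^1=O_f(O_g(0,x))$ makes this automatic.
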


\begin{proof}
If $n=1$ then $X$ is simply connected by Theorem \ref{Teo: H(Z_m)=1}. Suppose that $n>1$ and let $\beta$ be a $0$-normalized cocycle. Recall that $X$ is semiregular with $\alpha$ an $(|X|-1)$-cycle. Hence $|O_g(x,y)|=|X|-1$ for every $(0,0)\ne(x,y)\in X\times X$. By Lemma \ref{F on Delta} and Corollary \ref{F for 2trans}, $|O_f(O_g(x,y))| = |O_f(x,y)|\in\{1$, $F\}$ and $F\in\{|X|-1$, $(|X|-1)/2\}$. By Lemmas \ref{F on Delta} and \ref{geometric prog}, $|O_h(O_g(x,y))| = |O_h(x,y)|=|y|\in\{1,q\}$ for any $x$, $y\in X$. Moreover, by Corollary \ref{action of omega on diag}, $\mathcal O_g^0$ and $\mathcal O_g^f$ are disjoint singletons.

Suppose that $F=|X|-1$. By Lemma \ref{Lm:Technical}(i), $\beta=1$ on the complement of $O=\bigcup(\mathcal O_g^0\cup\mathcal O_g^f)$. Let $0\ne z\in X$. By Lemma \ref{Lm:Technical}(ii), we will be done if we show that $\beta(z,z\ldiv 0)=1$. By Corollary \ref{action of omega on diag}, $h(z,z\ldiv 0)\notin\bigcup\mathcal O_g^0$. Since $q\ge 3$, Lemma \ref{Lemma: affine order 2}(i) yields $h(z,z\ldiv 0)\not\in\bigcup\mathcal O_g^f$. Hence $h(z,z\ldiv 0)\in O$ and $\beta(z,z\ldiv 0) = \beta(h(z,z\ldiv 0)) = 1$.

For the rest of the proof suppose that $F=(|X|-1)/2$. The sets $O_g(u,u)$, $\mathcal O_g^f$ and $\mathcal O_g^0$ account for $1 + 2(|X|-1) = 2|X|-1$ elements of $X\times X$ and for all fixed points of $f$, leaving $|X|^2-(2|X|-1)=(|X|-1)^2$ points unaccounted for. The unaccounted points thus form two $\langle f,g\rangle$-orbits, each of size $F(|X|-1)$, say $\mathcal O_g^1$ and $\mathcal O_g^2$. We can certainly take $\mathcal O_g^1 = O_f(O_g(0,x))$ for some $0\ne x$. Since $\beta(0,x)=1$ by Lemma \ref{Lm:LatinEq}, we have $\beta=1$ on $\mathcal O_g^1$. If we can show that $\beta=1$ on $\mathcal O_g^2$, too, then we can finish as in the case $F=|X|-1$, completing the proof.

We will now show that if $q\ne 3$ then the induced action of $h$ on $\mathcal O_g$ has only two orbits, namely $\{O_g(0,0)\}$ and its complement. This will finish the proof for $q\ne 3$. Since $h$ has no fixed points in the set $\mathcal O_g^0\cup\mathcal O_g^f$ of size $2$, it suffices to consider the following situations:

(a) Suppose that $h$ acts on $\mathcal O_g^f\cup\mathcal O_g^1$ and thus also on $\mathcal O_g^0\cup\mathcal O_g^2$, both sets of size $F+1$. Let $0\ne x\in X$. Since $O_g(x,0)$ is fixed by $h$ and belongs to one of these sets, we see that $h$ acts on a set of size $F$, hence $q$ divides $F=(|X|-1)/2$, hence $q$ (being odd) divides $|X|-1$, a contradiction. We reach a similar contradiction if $h$ acts on $\mathcal O_g^f\cup\mathcal O_g^2$ and $\mathcal O_g^0\cup  \mathcal O_g^1$.

(b) Suppose that $h$ acts on $\mathcal O_g^0\cup \mathcal O_g^f\cup \mathcal O_g^1$ and thus also on $\mathcal O_g^2$, sets of size $F+2$ and $F$, respectively. Since $\mathcal O_g^2$ contains no fixed-points of $h$, we reach a contradiction as in (a).

(c) Suppose that $h$ acts on $\mathcal O_g^0\cup \mathcal O_g^f\cup \mathcal O_g^2$ and thus also on $\mathcal O_g^1$, sets of size $F+2$ and $F$, respectively. Once we account for $O_g(x,0)\in\mathcal O_g^1$, we see that $h$ acts on a set of size $F+2=(|X|+3)/2$ and on a set of size $F-1=(|X|-3)/2$, forcing $q=3$.

For the rest of the proof we can therefore assume that $q=3$. Let $0\ne x\in X$. Setting $x=z$ and $y=0$ in \eqref{CCC} yields $\beta(x\cdot 0,x)=\beta(x,0\cdot x)$. We also have $(x\cdot 0,x)$, $(x,0\cdot x) \not\in O$. Suppose for a while that $(x\cdot 0,x)$ and $(x,0\cdot x)$ are in the same $f$-orbit, that is,
\begin{displaymath}
    (x\cdot 0,x) = f^k(x,0\cdot x) = (f_k(x,0\cdot x),f_{k-1}(x,0\cdot x)\cdot 0)
\end{displaymath}
for some $k\ge 1$. Note that $(0\cdot x)/0 = (1-\alpha)^{-1}\alpha(x)$. Comparing coordinates, Lemma \ref{Lm:iterated product} yields
\begin{gather*}
    (1-\alpha)(x) = x\cdot 0 = f_k(x,0\cdot x) = x + \sum_{j=1}^k (-1)^j\alpha^j(x-(1-\alpha)^{-1}\alpha(x)),\\
    x = f_{k-1}(x,0\cdot x)\cdot 0 = (1-\alpha)(x+\sum_{j=1}^{k-1}(-1)^j\alpha^j(x-(1-\alpha)^{-1}\alpha(x))).
\end{gather*}
Applying $1-\alpha$ to the first identity and using $q=3$ then yields
\begin{align*}
    (1-\alpha)^2(x) &= (1-\alpha)(x) + \sum_{j=1}^k (-1)^j\alpha^j((1-\alpha)(x)-\alpha(x))\\
        &= (1-\alpha)(x) +\sum_{j=1}^k (-1)^j\alpha^j(1+\alpha)(x),
\end{align*}
while the second identity can be rewritten as
\begin{displaymath}
    x = (1-\alpha)(x) + \sum_{j=1}^{k-1}(-1)^j\alpha^j((1-\alpha)(x)-\alpha(x)) = (1-\alpha)(x) +\sum_{j=1}^{k-1}(-1)^j\alpha^j(1+\alpha)(x).
\end{displaymath}
Subtracting the two last identities now gives $(1-\alpha)^2(x) - x = (-1)^k\alpha^k(1+\alpha)(x)$. Since $(1-\alpha)^2 = 1-2\alpha+\alpha^2 = 1+\alpha+\alpha^2$, we can rewrite this as $\alpha(1+\alpha)(x) = (-1)^k\alpha^k(1+\alpha)(x)$. Noting that $1+\alpha\in\aut{A}$ (if $\alpha(x)=-x$ then $\alpha^2=1$ and $|X|=3$) and, canceling, we finally get $x = (-1)^k\alpha^{k-1}(x)$. If $k$ is even, we deduce $k\equiv 1\pmod{|X|-1}$, thus also $k\equiv 1\pmod F$, but then $(x\cdot 0,x) = f^k(x,x\cdot 0)=(x,x\cdot 0)$ implies $x\cdot 0 = x$, $x=0$, a contradiction. If $k$ is odd, we deduce $2(k-1)\equiv 0\pmod { |X|-1}$, therefore $k\equiv 1\pmod F$, and we reach the same contradiction.

Consequently, the elements $(x,x\cdot 0)$ and $(x\cdot 0,x)$ are not in the same $f$-orbit, hence one of them lies in $\bigcup \mathcal O_g^1$ while the other in $\bigcup \mathcal O_g^2$, and we see that $\beta=1$ on $\mathcal O_g^2$.
\end{proof}

\begin{theorem}\label{even_2trans}
Let $X=\gal{ \mathbb{Z}_{2}^{n},\alpha } $ be a
doubly transitive quandle with $n\neq 2$. Then $X$ is simply connected.
\end{theorem}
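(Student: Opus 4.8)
The plan is to run the scheme of Theorem \ref{odd_2trans}, but to replace its decisive step, which degenerates in characteristic $2$. First note that double transitivity forces $|X|=2^n>2$, so $n\ge 3$ (the case $n=2$ is excluded, while $n=1$ would give $|X|=2$, which carries no doubly transitive quandle). Fix a $0$-normalized cocycle $\beta$. Recall that $X$ is semiregular with $\alpha=L_0$ an $(|X|-1)$-cycle, and that $F=|X|-1$ by Corollary \ref{F for 2trans}(iii). Lemma \ref{Lm:Technical}(i) then yields $\beta=1$ off the set $O=\bigcup(\mathcal O_g^f\cup\mathcal O_g^0)$, and by Corollary \ref{F for 2trans}(i) each of $\mathcal O_g^f$, $\mathcal O_g^0$ is a single $g$-orbit; since $\beta$ is $g$-invariant (Proposition \ref{Prop:invariance_under_g}), it is constant on $\bigcup\mathcal O_g^f$, say with value $\sigma$. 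The whole proof thus reduces to showing $\sigma=1$: for then $\beta=1$ on $\bigcup\mathcal O_g^f$, and as $2x=0$ for every $x$, Lemma \ref{Lemma: affine order 2}(i) gives $h(\bigcup\mathcal O_g^0)\subseteq\bigcup\mathcal O_g^f$, so $h$-invariance of $\beta$ forces $\beta=1$ on $\bigcup\mathcal O_g^0$ as well, and hence $\beta=1$ everywhere.

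The obstacle is that the characteristic-$2$ case cannot mimic Theorem \ref{odd_2trans}, where the value on $\mathcal O_g^0$ was annihilated by moving $(z,z\ldiv 0)$ outside $O$ through $h$; that maneuver worked exactly because $2z\ne 0$. Here $2z=0$ always, $h$ interchanges the two $g$-orbits $\mathcal O_g^f$ and $\mathcal O_g^0$, and consequently $\langle f,g,h\rangle$ preserves $O$, so invariance alone can never reach the region where $\beta$ is already known to be trivial. I will instead force $\sigma=1$ from the cocycle identity \eqref{CCC} itself, by selecting a triple $(x,y,z)$ for which exactly one of the four pairs appearing in \eqref{CCC} lies in $O$.

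Concretely, identify $X$ with $\mathbb F_{2^n}$ and $\alpha$ with multiplication by a primitive element $\omega$ (a Singer cycle), so that $a\cdot b=(1+\omega)a+\omega b$, $\bigcup\mathcal O_g^f=\setof{(a,(1+\omega)a)}{a\ne 0}$ and $\bigcup\mathcal O_g^0=\setof{(a,b)}{a\cdot b=0,\ a\ne 0}$. Fix $x\ne 0$ and put $z=(1+\omega)(x+y)$. A direct computation with the affine multiplication formula shows that this choice places $(x\cdot y,x\cdot z)$ into $\bigcup\mathcal O_g^f$ (which also requires $x\cdot y\ne 0$), while each of the remaining pairs $(x,z)$, $(x,y\cdot z)$, $(y,z)$ of \eqref{CCC} lands in $O$ only when $y$ takes one of the exceptional values $0$, $x$, $cx$, $c^{-1}x$, where $c=\omega^{-1}(1+\omega)$. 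For any $y$ outside this list, \eqref{CCC} reads $\sigma\cdot 1=1\cdot 1$, whence $\sigma=1$.

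It remains to guarantee such a $y$, and this is the genuine crux: we need $y\in\mathbb F_{2^n}\setminus\{0,x,cx,c^{-1}x\}$, a set obtained by deleting at most four elements from $\mathbb F_{2^n}$. Since $|\mathbb F_{2^n}|=2^n\ge 8$ for $n\ge 3$, an admissible $y$ exists. This is precisely where $n=2$ collapses: in $\mathbb F_4$ one has $\{x,cx,c^{-1}x\}=\mathbb F_4\setminus\{0\}$ (indeed $c=\omega$ and $c^{-1}=\omega^2$ there), leaving no admissible $y$, in agreement with the fact that the unique doubly transitive quandle of order $4$ is not simply connected. I expect the only laborious part of the write-up to be the routine verification, via $a\cdot b=(1+\omega)a+\omega b$, that the four membership conditions hold as stated and in particular that the exceptions for all three pairs $(x,z)$, $(x,y\cdot z)$, $(y,z)$ genuinely collapse to the single four-element list above.
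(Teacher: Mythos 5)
Your proposal is correct, and I verified the computations: with $a\cdot b=(1+\omega)a+\omega b$ and $z=(1+\omega)(x+y)$ one indeed gets $x\cdot z=(1+\omega)(x\cdot y)$, so $(x\cdot y,x\cdot z)\in\bigcup\mathcal O_g^f$ exactly when $x\cdot y\ne 0$, and the membership conditions for $(x,z)$, $(x,y\cdot z)$, $(y,z)$ do collapse to the list $\{0,x,cx,c^{-1}x\}$ with $c=\omega^{-1}(1+\omega)$. Your argument shares the paper's scaffolding (normalized cocycles, $f$-, $g$-, $h$-invariance, Lemma \ref{Lm:Technical}, Corollary \ref{F for 2trans}), but the decisive step is genuinely different. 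The paper reduces, via Lemma \ref{Lm:Technical}(ii), to producing a single $z\ne 0$ with $\beta(z,z\ldiv 0)=1$, and obtains it from Lemma \ref{some prop of beta}, $\beta(0/y,y)=\beta(0/(0/y),y)$, where $(0/(0/y),y)$ lies off $O=\bigcup(\mathcal O_g^f\cup\mathcal O_g^0)$ by Lemma \ref{Lemma: affine order 2}(ii) as soon as $(\alpha^2+\alpha-1)(y)=(\alpha^2+\alpha+1)(y)\ne 0$, i.e. $\alpha^3(y)\ne y$, which holds since $\alpha$ is an $(|X|-1)$-cycle and $|X|\ne 4$. You instead target $\mathcal O_g^f$, kill its common value $\sigma$ by a bespoke instance of \eqref{CCC} in which exactly one of the four terms lies in $O$, and then transfer to $\mathcal O_g^0$ by Lemma \ref{Lemma: affine order 2}(i) — the same $h$-transfer the paper uses inside Lemma \ref{Lm:Technical}(ii), but in the opposite direction. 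At bottom both decisive steps are instances of \eqref{CCC} linking $O$ to the region where $\beta$ is already trivial; the paper's is shorter because Lemma \ref{some prop of beta} is pre-packaged, while yours is more explicit, turns the hypothesis $|X|\ne 4$ into a transparent counting condition (four excluded values of $y$ versus $2^n\ge 8$), and your observation that in characteristic $2$ the map $h$ swaps $\mathcal O_g^f$ and $\mathcal O_g^0$, so that $\langle f,g,h\rangle$ preserves $O$, explains cleanly why invariance alone cannot finish — which is precisely why the paper needs Lemma \ref{some prop of beta}. One cosmetic remark: the Singer-cycle identification you invoke without proof is standard, but it is also avoidable, since every computation in your argument uses only that $\alpha$ and $1+\alpha=1-\alpha$ are invertible and commute; one can simply read $cx$ and $c^{-1}x$ as $\alpha^{-1}(1+\alpha)(x)$ and $(1+\alpha)^{-1}\alpha(x)$, keeping the proof entirely inside the paper's affine formalism.
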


\begin{proof}
By Corollary \ref{F for 2trans}(i), $F=|X|-1$. Then by Lemma \ref{Lm:Technical}, $\beta(x,y)=1$ for every $O_g(x,y)\not\in  \mathcal O_g^0\cup \mathcal O_g^f$ and it suffices to show that $\beta(z,z\ldiv 0)=1$ for some $z\ne 0$. Note that any element $(z,z\ldiv 0)$ can be written as $(0/y,y)$ by setting $y=z\ldiv 0$. By Proposition \ref{some prop of beta}, $\beta(0/y,y)=\beta(0/(0/y),y)$ and $O_g(0/(0/y),y) \notin\mathcal O_g^0$. If we show that $O_g(0/(0/y),y)\not\in\mathcal O_g^f $ for some $y\ne 0$, then $\beta(0/y,y)=\beta(0/(0/y),y)=1$ and we are through.

By Lemma \ref{Lemma: affine order 2}(ii), it suffices to show that $(\alpha^2+\alpha+1)(y)=(\alpha^2+\alpha-1)(y)\ne 0$, which is equivalent to $\alpha^3(y)\ne y$, and this follows from the fact that $\alpha$ is an $(|X|-1)$-cycle (here we use $|X|\ne 4$).
\end{proof}

We have now proved Theorem \ref{Th:Main}. We will show in Section \ref{Sc:Clauwens} that a doubly transitive quandle of order $4$ is not simply connected.

\section{A short proof of Theorem \ref{Th:Main}}\label{Sc:Clauwens}

In this section we prove Theorem \ref{Th:Main} once more, this time using results of Clauwens \cite{Clauwens} on the fundamental group of affine quandles.

Clauwens showed how to explicitly calculate the fundamental groups of affine quandles. Following \cite[Definition 1]{Clauwens}, let $G=(G,+)$ be an abelian group and $\mathcal Q(G,\alpha)$ an affine quandle. Let
\begin{align*}
    I(G,\alpha) &= \genof{x\otimes y-y\otimes \alpha(x)}{x,\,y\in G},\\
    S(G,\alpha) &= (G\otimes G)/I(G,\alpha),\\
    F(G,\alpha) &= \mathbb Z\times G\times S(G,\alpha),
\end{align*}
where the operation on $F(G,\alpha)$ is given by
\begin{displaymath}
    (k,x,a)(m,y,b) = (k+m,\,\alpha^m(x)+y,\,a+b+(\alpha^m(x)\otimes y+I(G,\alpha))).
\end{displaymath}
Then we have:

\begin{theorem}[{{\cite[Theorem 1 and page 4]{Clauwens}}}]\label{Th:Clauwens}
Let $G=(G,+,0)$ be an abelian group and $\mathcal Q(G,\alpha)$ an affine quandle. Then the groups $\mathrm{Adj}(\mathcal Q(G,\alpha))$ and $F(G,\alpha)$ are isomorphic, and the groups $\pi_1(\mathcal Q(G,\alpha),0)$ and $S(G,\alpha)$ are isomorphic.
\end{theorem}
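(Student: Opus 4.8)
The second isomorphism will follow quickly from the first, so the plan is to concentrate on $\mathrm{Adj}(X)\cong F(G,\alpha)$, where $X=\mathcal Q(G,\alpha)$, and then read off $\pi_1$ by transporting the canonical action of $\mathrm{Adj}(X)$ on $X$ (given by $e_x\mapsto L_x$) across the isomorphism and computing the stabilizer of $0$. First I would pass to the generating set $t:=e_0$ and $a_x:=e_xe_0^{-1}$ (so $a_0=1$). Using the defining relation $e_{x\cdot y}=e_x^{-1}e_ye_x$ and its instance $t^{-1}e_xt=e_{0\cdot x}=e_{\alpha(x)}$, one rewrites the whole presentation as $t^{-1}a_xt=a_{\alpha(x)}$ together with $a_{x\cdot y}=a_{\alpha(x)}^{-1}a_{\alpha(y)}a_x$. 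Hence $\mathrm{Adj}(X)=P\rtimes\langle t\rangle$ with $\langle t\rangle\cong\mathbb Z$ acting by $a_x\mapsto a_{\alpha(x)}$, where $P=\langle a_x\mid x\in G\rangle=\mathrm{Adj}(X)^\circ$.

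The core is to show that $P$ is a central extension $1\to N\to P\stackrel{\theta}{\to}G\to1$ with $N\cong S(G,\alpha)$, where $\theta(a_x)=x$ (well defined since $\theta$ respects the relation, as $-\alpha(x)+\alpha(y)+x=x\cdot y$). \textbf{Step one: $N=[P,P]$.} The rewritten relation gives $a_ma_n=a_{\alpha(n)}a_{m+(1-\alpha)(n)}$, which abelianizes to $\bar a_m+\bar a_n=\bar a_{\alpha(n)}+\bar a_{m+(1-\alpha)(n)}$ in $P^{\mathrm{ab}}$. Substituting $n=(1-\alpha)^{-1}(p)$ — \emph{here connectedness enters through $1-\alpha\in\aut A$, Proposition~\ref{connected iff latin}} — yields $\bar a_{m+p}-\bar a_m=\bar a_{(1-\alpha)^{-1}(p)}-\bar a_{\alpha(1-\alpha)^{-1}(p)}$, whose right-hand side is independent of $m$; taking $m=0$ gives $\bar a_{m+p}=\bar a_m+\bar a_p$, so $x\mapsto\bar a_x$ is a homomorphism $G\to P^{\mathrm{ab}}$ split by $\theta$. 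Thus $P^{\mathrm{ab}}\cong G$ and $N=\ker\theta=[P,P]$; one also checks from the relations that $[P,P]$ is central, so $P$ is nilpotent of class $\le2$. \textbf{Step two: $N\cong S(G,\alpha)$.} Class-two nilpotency makes the commutator $(x,y)\mapsto[a_x,a_y]$ a biadditive map $G\times G\to N$, hence a surjection $G\otimes G\twoheadrightarrow N$; translating the defining relation into commutator form shows it kills exactly the generators $x\otimes y-y\otimes\alpha(x)$ of $I(G,\alpha)$, so it factors through a surjection $S(G,\alpha)\twoheadrightarrow N$. The reverse inequality I would obtain by building the inverse homomorphism $F(G,\alpha)\to\mathrm{Adj}(X)$ directly: send $(1,0,0)\mapsto t$, $(0,x,0)\mapsto a_x$, and the central element $(0,0,x\otimes y+I)\mapsto[a_x,a_y]$, then verify that the relations packaged into the twisted multiplication of $F(G,\alpha)$ hold in $\mathrm{Adj}(X)$; matching the $\langle t\rangle$-action with the $\alpha^m$-twist then assembles the isomorphism. (Alternatively one builds the forward map from the universal property of $\mathrm{Adj}(X)$ via $e_x\mapsto(1,x,c(x))$; a short computation using $x\otimes y\equiv y\otimes\alpha(x)$ and its consequence $\alpha(x)\otimes\alpha(y)\equiv x\otimes y$ reduces the cocycle condition to $c(x\cdot y)-c(y)\equiv x\otimes(\alpha^2-1)(y-x)$, which pins down the correction $c$.)

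\textbf{The main obstacle} is precisely this identification $N\cong S(G,\alpha)$: that the commutator pairing factors through $I(G,\alpha)$ is routine, but the \emph{injectivity} of $S(G,\alpha)\to N$ — equivalently, exhibiting the backward map in closed form and checking its compatibility with the tensor cocycle $\alpha^m(x)\otimes y+I$ — is the delicate tensor bookkeeping. Connectedness is genuinely indispensable: for the trivial quandle ($\alpha=\mathrm{id}$, $1-\alpha=0$) the argument of Step one collapses and one finds $N\cong\mathbb Z\not\cong S(G,\alpha)$, so the hypothesis $1-\alpha\in\aut A$ cannot be dropped.

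Finally, for the fundamental group, under the isomorphism $a_x$ acts on $X=G$ as the translation $z\mapsto z+(1-\alpha)(x)$ (because $L_xL_0^{-1}$ is exactly this translation, using $L_0=\alpha$), while the central subgroup $N$ acts trivially. Hence $P=\mathrm{Adj}(X)^\circ$ acts through translations, and the stabilizer of $0$ in $P$ equals $\theta^{-1}(\ker{1-\alpha})$, which is exactly $N$ since $1-\alpha$ is injective. Therefore $\pi_1(X,0)=N\cong S(G,\alpha)$, completing the proof.
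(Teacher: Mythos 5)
First, a caveat on the comparison itself: the paper does \emph{not} prove this statement---it is quoted from Clauwens' preprint and used as a black box---so there is no internal proof to measure your argument against; what follows judges it on its own merits. Much of your skeleton is sound: the decomposition $\mathrm{Adj}(X)=P\rtimes\langle t\rangle$ with $t=e_0$, the rewritten presentation $a_ma_n=a_{\alpha(n)}a_{m+(1-\alpha)(n)}$, the computation $P^{\mathrm{ab}}\cong G$ in Step one, and the final reduction of $\pi_1(X,0)$ to $N=\ker\theta$ are all correct. Your insistence on the hypothesis $1-\alpha\in\mathrm{Aut}(G)$ is also a genuine insight rather than a defect: with the paper's definitions the statement as printed fails for $\alpha=\mathrm{id}$ (for $G=\mathbb{Z}_2$ one gets $\mathrm{Adj}(X)\cong\mathbb{Z}^2$, torsion-free, while $F(G,\mathrm{id})\cong\mathbb{Z}\times\mathbb{Z}_4$ has $4$-torsion, and $\pi_1\cong\mathbb{Z}\not\cong S\cong\mathbb Z_2$), so connectedness---which is the only case the paper ever uses, via Proposition \ref{connected iff latin}---must indeed be assumed.

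However, Step two breaks at exactly the point you label routine: the pairing $\varphi(x\otimes y)=[a_x,a_y]$ does \emph{not} kill the generators $x\otimes y-y\otimes\alpha(x)$ of $I(G,\alpha)$. Here is a concrete contradiction using only facts you accept. In any group $[g,h][h,g]=1$, so once $[P,P]$ is central and $\varphi$ biadditive, $\varphi(x\otimes y)+\varphi(y\otimes x)=0$; your claimed relation gives $\varphi(x\otimes y)=\varphi(y\otimes\alpha(x))$; combining the two yields $\varphi\bigl(y\otimes(x+\alpha(x))\bigr)=0$ for all $x,y$. Now take the paper's Example \ref{Ex:Clauwens4}, $G=\mathbb{Z}_2^2$ with $\alpha$ of order $3$: there $1+\alpha$ is bijective (it sends $e_1\mapsto e_2$, $e_2\mapsto e_1+e_2$), so $\varphi\equiv 0$ on all of $G\otimes G$, hence $N=[P,P]=1$ and, by your own last paragraph, $\pi_1=1$. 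This contradicts the paper's computation that this quandle is \emph{not} simply connected ($S(G,\alpha)\cong\mathbb{Z}_2$, Example \ref{Ex:Clauwens4}), a fact also known independently of Clauwens from Eisermann's covering theory. What is actually true is that $[a_x,a_y]$ corresponds to $[x\otimes y]-[y\otimes x]=[x\otimes(1-\alpha)(y)]$ in $S(G,\alpha)$, so $\varphi$ kills $(1\otimes(1-\alpha))^{-1}I(G,\alpha)$ rather than $I(G,\alpha)$; the pairing that induces the quotient map $G\otimes G\to S(G,\alpha)$ is the twisted one $(x,y)\mapsto[a_x,a_{(1-\alpha)^{-1}(y)}]$. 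The error propagates: your proposed inverse map sending $(0,0,x\otimes y+I)\mapsto[a_x,a_y]$ is not well defined on $S(G,\alpha)$, since well-definedness requires precisely the false identity $[a_x,a_y]=[a_y,a_{\alpha(x)}]$. Two further gaps remain even after inserting the $(1-\alpha)^{-1}$ twist (which your connectedness hypothesis does permit): the centrality of $[P,P]$ is asserted (``one also checks from the relations'') but is a real multi-step verification, not a one-liner; and the injectivity of $S(G,\alpha)\to N$---which you yourself flag as the main obstacle---is only sketched. So as written, the heart of the theorem, $N\cong S(G,\alpha)$, is not established, although the overall architecture is repairable.
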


We are ready to prove Theorem \ref{Th:Main}. Let $X=\mathcal Q(G,\alpha)$ be a finite connected affine quandle. By Proposition \ref{Pr:SC} and Theorem \ref{Th:Clauwens}, $X$ is simply connected if and only if $S(G,\alpha)$ is trivial.

Suppose first that $G=\mathbb Z_n$ is cyclic. Then $G\otimes G = \mathbb Z_n\otimes\mathbb Z_n\cong \mathbb Z_n$ is generated by $1\otimes 1$. For $y\in G$ we have $1\otimes (1-\alpha)(y) = 1\otimes y - 1\otimes \alpha(y) = y\otimes 1 - 1\otimes \alpha(y)\in I(G,\alpha)$. Since $X$ is connected, the homomorphism $1-\alpha$ is bijective by Proposition \ref{connected iff latin}. In particular, $1\otimes 1\in I(G,\alpha)$ and $S(G,\alpha)$ is trivial.

Now suppose that $X=\mathcal Q(G,\alpha)$ is doubly transitive, $G$ is not cyclic, and $|X|\ne 4$. By the argument given in Subsection \ref{Ss:DoublyTransitive}, $G=\mathbb Z_p^n$ for some prime $p$ and $n>1$, and $\alpha$ is a cycle of length $|X|-1$.

Let us write $u\equiv v$ if $u$, $v\in G\otimes G$ coincide modulo $I(G,\alpha)$. For every $x\in G$ we have
\begin{align*}
    0&\equiv x\otimes (x+\alpha(x)) -(x+\alpha(x))\otimes \alpha(x) \\
    &= x\otimes x +x\otimes \alpha(x) - x\otimes \alpha(x) - \alpha(x)\otimes\alpha(x)
    = x\otimes x - \alpha(x)\otimes\alpha(x).
\end{align*}
Therefore $x\otimes x \equiv \alpha(x)\otimes \alpha(x)$ for every $x$. Since $\alpha$ is a cycle of length $|X|-1$, we conclude that there is $e\in G$ such that
\begin{equation}\label{Eq:xx}
    x\otimes x\equiv e\quad\text{for every $0\ne x\in G$}.
\end{equation}
If $x$, $y\in G$ are such that $x\ne 0\ne y$ and $x\ne y$, equation \eqref{Eq:xx} implies
\begin{displaymath}
    e\equiv (x-y)\otimes (x-y) = x\otimes x - x\otimes y -y\otimes x + y\otimes y
    \equiv 2e - x\otimes y - y\otimes x.
\end{displaymath}
Therefore
\begin{equation}\label{Eq:Plus}
    x\otimes y + y\otimes x \equiv e\quad\text{for every $x\ne 0\ne y$ with $x\ne y$}.
\end{equation}
We proceed to show that $e\equiv 0$.

Suppose that $p\ne 2$. Since $|X|>4$, there are $x$, $y\in G$ such that $x\ne 0\ne y$ and $x\ne \pm y$. Then \eqref{Eq:xx} implies
\begin{displaymath}
    e\equiv (x+y)\otimes (x+y) = x\otimes x + x\otimes y + y\otimes x + y\otimes y
    \equiv 2e + x\otimes y + y\otimes x
\end{displaymath}
and we deduce $x\otimes y + y\otimes x \equiv -e$. But \eqref{Eq:Plus} holds for our choice of $x$, $y$ as well, and thus $e\equiv 0$.

Suppose now that $p=2$. Since $|X|>4$, there are distinct and nonzero $x$, $y$, $z\in G$ such that $x+y+z\ne 0$. Then \eqref{Eq:xx} and \eqref{Eq:Plus} imply
\begin{align*}
    e&\equiv (x+y+z)\otimes(x+y+z) \\
    &= x\otimes x + y\otimes y + z\otimes z  + (x\otimes y + y\otimes x) + (x\otimes z + z\otimes x) + (y\otimes z + z\otimes y)\\
    &\equiv 6e\equiv 0,
\end{align*}
and we again conclude that $e\equiv 0$.

Let us continue the proof with $p$ arbitrary. We have shown that $x\otimes x\equiv 0$ for every $x$. The calculations leading to \eqref{Eq:Plus} can now be repeated for any $x$, $y$, and we obtain $x\otimes y \equiv - y\otimes x$ for every $x$, $y$. Hence
\begin{equation}\label{Eq:alpha}
    0\equiv x\otimes y -y\otimes \alpha(x)\equiv x\otimes y + \alpha(x)\otimes y
    = (x+\alpha(x))\otimes y
\end{equation}
for every $x$, $y\in G$. We claim that $1+\alpha$ is bijective. Indeed, suppose that $(1+\alpha)(x)=0$ for some $x\ne 0$. Then $\alpha(x)=-x$ and $\alpha^2(x)=x$, a contradiction with $\alpha$ being a cycle of length $|X|-1$ (using $|X|>3$). Now \eqref{Eq:alpha} shows that $x\otimes y\equiv 0$ for every $x$, $y\in G$, and Theorem \ref{Th:Main} is proved.

\begin{example}\label{Ex:Clauwens4}
This example shows that a doubly transitive quandle $\mathcal Q(G,\alpha)$ of order $4$ is not simply connected. We will calculate $I(G,\alpha)$. Let $\{e_1,e_2\}$ with $e_1=\binom{1}{0}$ and $e_2=\binom{0}{1}$ be a basis of $G=\mathbb Z_2^2$, and suppose without loss of generality that $\alpha=\binom{1\ 1}{1\ 0}$. Then $\alpha(e_1) = e_1+e_2$, $\alpha(e_2) = e_1$ and $\alpha(e_1+e_2)=e_2$. Calculating in $G\otimes G$, we get
\begin{align*}
    e_1\otimes e_1 + e_1\otimes \alpha(e_1)&= e_1\otimes e_2,\\
    e_2\otimes e_1 + e_1\otimes \alpha(e_2)&=  e_1\otimes e_1+e_2\otimes e_1 ,\\
    e_1\otimes e_2 + e_2\otimes \alpha(e_1) &= e_1\otimes e_2 + e_2\otimes e_1 + e_2\otimes e_2,\\
    e_2\otimes e_2 + e_2\otimes \alpha(e_2) &= e_2\otimes e_1 + e_2\otimes e_2.
\end{align*}
Hence $I(G,\alpha)$ is the span of $e_1\otimes e_1+e_2\otimes e_1$, $e_2\otimes e_1+e_2\otimes e_2$ and $e_1\otimes e_2$. Since $G\otimes G\cong\mathbb Z_2^4$, the quandle $\mathcal Q(G,\alpha)$ is not simply connected.
\end{example}

\section{Constant cocycles with coefficients in arbitrary groups}\label{Sc:ArbitraryGroup}

Following \cite{AG}, we have defined constant quandle cocycles as mappings $\beta: X\times X\to \sym{S}$ but the definition makes sense for any group $G$.

\begin{definition}
Let $X$ be a quandle and $G$ a group. Let
\begin{displaymath}
    Z^2_c(X,G)=\setof{\beta:X\times X\to G}{\beta \text{ satisfies } \eqref{CCC} \text{ and } \eqref{CQC}}.
\end{displaymath}
For $\beta$, $\beta'\in Z^2_c(X,G)$, we write $\beta\sim\beta'$ if there exists a mapping $\gamma:X\to G$ such that
\begin{equation*}
    \beta'(x,y) =\gamma(xy) \beta(x,y) \gamma(y)^{-1}
\end{equation*}
holds for every $x$, $y\in X$. Then $H^2_c(X,G) =Z^2_c(\X,G)/\sim$ is the \emph{second constant cohomology set of $X$ with coefficients in $G$}.
\end{definition}

A careful look at all our results shows that all calculations can be carried out over an arbitrary group $G$, not just over symmetric groups.

\begin{proposition}\label{Pr:Embedding}
Let $X$ be a quandle and $G$ a group. Then $Z^2_c(X,G)$ embeds into $Z^2_c(X,\sym{G})$.
\end{proposition}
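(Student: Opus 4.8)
The plan is to use the classical idea that any group embeds into the symmetric group on its own underlying set via the left regular representation, and to check that this embedding is compatible with the constant cocycle conditions and the cohomology relation. Let $\lambda:G\to\sym{G}$ be the left regular representation, where $\lambda(a)$ is the permutation $b\mapsto ab$ of the set $G$. This is an injective group homomorphism (indeed $\lambda(a)=\lambda(a')$ forces $a=a'1=a'$). The idea is then to define a map $Z^2_c(X,G)\to Z^2_c(X,\sym{G})$ by postcomposition with $\lambda$, that is, by sending $\beta$ to $\lambda\circ\beta$.

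First I would verify that $\lambda\circ\beta$ is actually a constant cocycle with coefficients in $\sym{G}$, i.e.\ that it satisfies \eqref{CCC} and \eqref{CQC}. This is immediate from the fact that $\lambda$ is a group homomorphism: applying $\lambda$ to both sides of the identity $\beta(xy,xz)\beta(x,z)=\beta(x,yz)\beta(y,z)$ and of $\beta(x,x)=1$ preserves products and the identity element, so the images again satisfy the same relations in $\sym{G}$. Thus $\lambda\circ\beta\in Z^2_c(X,\sym{G})$ whenever $\beta\in Z^2_c(X,G)$.

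Next I would show that the assignment $\beta\mapsto\lambda\circ\beta$ is injective. This is clear: if $\lambda\circ\beta=\lambda\circ\beta'$ then $\lambda(\beta(x,y))=\lambda(\beta'(x,y))$ for all $x$, $y$, and since $\lambda$ is injective we get $\beta=\beta'$. At this point the map is a genuine injection of sets $Z^2_c(X,G)\hookrightarrow Z^2_c(X,\sym{G})$, which already establishes the stated embedding at the level of cocycles.

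The step that merits the most care, and which I expect to be the main point worth stating, is that the embedding is compatible with the equivalence relation $\sim$ on both sides, so that it descends to a well-defined (injective) map on cohomology $H^2_c(X,G)\to H^2_c(X,\sym{G})$. Here one takes a coboundary transformation $\gamma:X\to G$ witnessing $\beta\sim\beta'$, namely $\beta'(x,y)=\gamma(xy)\beta(x,y)\gamma(y)^{-1}$, and postcomposes with $\lambda$ to obtain $\lambda\circ\gamma:X\to\sym{G}$; applying the homomorphism $\lambda$ to the coboundary identity yields exactly the analogous identity in $\sym{G}$, so $\lambda\circ\beta\sim\lambda\circ\beta'$. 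Conversely, to get injectivity on cohomology one must check that if $\lambda\circ\beta\sim\lambda\circ\beta'$ in $Z^2_c(X,\sym{G})$, witnessed by some $\delta:X\to\sym{G}$, then in fact $\beta\sim\beta'$ already over $G$. This is the only genuinely nontrivial direction: the witnessing $\delta$ takes values in $\sym{G}$ and need not lie in the image of $\lambda$, so one argues that the coboundary relation forces the relevant values of $\delta$ into $\lambda(G)$ (for instance by evaluating the resulting permutation identity at the identity element of $G$ and using the transitivity and regularity of the action $\lambda$), thereby recovering a $G$-valued transformation $\gamma$. Carrying out this descent carefully is the crux of the proof.
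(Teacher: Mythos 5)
Your first two paragraphs prove exactly the stated proposition, and by the same route as the paper: postcompose with the left regular representation $\lambda_G:G\to\sym{G}$, observe that a group homomorphism preserves the relations \eqref{CCC} and \eqref{CQC}, and use injectivity of $\lambda_G$ to get injectivity of $\beta\mapsto\lambda_G\circ\beta$. That part is correct and complete; the statement concerns only the sets of cocycles $Z^2_c$, so the proof ends there.

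The ``crux'' you describe in your final paragraph, however, is a genuine error. The forward direction is fine (applying $\lambda_G$ to a coboundary identity shows the map descends to a well-defined map $H^2_c(X,G)\to H^2_c(X,\sym{G})$, and the paper does this too), but the descent you propose for the converse cannot be carried out: the induced map on cohomology is \emph{not} injective in general. The obstruction is exactly where you sense danger: a witness $\delta:X\to\sym{G}$ for $\lambda_G\circ\beta\sim\lambda_G\circ\beta'$ need not take values in $\lambda_G(G)$, and no evaluation-at-the-identity or regularity argument can force it to. The paper gives an explicit counterexample (Example \ref{Ex:NotEmbedding}): for the doubly transitive quandle $X=\gal{\mathbb Z_2^2,\alpha}$ and $G=\mathbb Z_2^2$, the cocycles $\beta_a$, $\beta_b$ attached to distinct nonzero $a$, $b\in G$ are not cohomologous over $G$ (cohomologous means conjugate, and $G$ is abelian), yet $j(\beta_a)\sim j(\beta_b)$ because $\lambda_G(a)$ and $\lambda_G(b)$, being fixed-point-free involutions of a $4$-element set, have the same cycle structure and hence are conjugate in $\sym{G}$. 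Conjugacy in $\sym{G}$ is strictly coarser than conjugacy in $\lambda_G(G)$, and this coarseness is precisely what defeats the recovery of a $G$-valued transformation $\gamma$. So: keep your first two paragraphs, delete the last, and the proof is right.
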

\begin{proof}
Let $\lambda_G:G\to\sym{G}$, $\lambda_G(g)(h) = gh$ be the left regular representation of $G$. Define
\begin{equation}\label{Eq:Embedding}
    j: Z^2_c(X,G)\to  Z^2_c(X,\sym{G}),\quad j(\beta)(x,y)=\lambda_G(\beta(x,y)).
\end{equation}
Then it is easy to see that $j$ is injective.
\end{proof}

Let us show that the embedding $j$ in \eqref{Eq:Embedding} induces a map $j:H^2_c(X,G)\to H^2_c(X,\sym{G})$. Suppose that $\beta$, $\beta'\in Z^2_c(X,G)$ are cohomologous and let $\gamma:X\to G$ be such that $\gamma(xy)\beta(x,y)\gamma(y)^{-1} = \beta'(x,y)$ for every $x$, $y\in X$. We claim that $j(\beta)$, $j(\beta')\in Z^2_c(X,\sym{G})$ are cohomologous via $j(\gamma)$, defined by $j(\gamma)(x)=\lambda_G(\gamma(x))$. Indeed, for every $a\in G$ and every $x$, $y\in X$ we have
\begin{displaymath}
    j(\gamma)(xy)j(\beta)(x,y)(j(\gamma)(y))^{-1}(a){=}\gamma(xy)\beta(x,y)\gamma(y)^{-1}a{=}\beta'(x,y)a{=}j(\beta')(x,y)(a).
\end{displaymath}

However, the induced map $j:H^2_c(X,G)\to H^2_c(X,\sym{G})$ is not necessarily an embedding as we shall see in Example \ref{Ex:NotEmbedding}. We start by having another look at $4$-element doubly transitive quandles.

\begin{example}\label{Ex:Details}
Let $X= \gal{ \mathbb{Z}_{2}^{2},\alpha}$ be a doubly transitive quandle and $G$ a group. Suppose that $\alpha$, $e_1$, $e_2$ are as in Example \ref{Ex:Clauwens4}. We claim that
\begin{equation}\label{4elements_cohomology}
    H^2_c(X,G) =\setof{[\beta_a]_\sim}{a\in G,\,a^2=1},
\end{equation}
where $\beta_a$ is given by
\begin{equation}\label{beta X=4}
    \begin{array}{c|cccc}
    &0&e_1&e_1+e_2&e_2\\
    \hline
    0& 1 & 1 & 1 & 1 \\
    e_1& 1 & 1 & a & a \\
    e_1+e_2& 1 & a & 1 & a \\
    e_2& 1 & a & a & 1%
    \end{array},
\end{equation}
and, moreover, $\beta_a\sim \beta_b$ if and only if $a$ and $b$ are conjugate in $G$.

To see this, first recall that $x\cdot y = x + \alpha(-x+y) = x+\alpha(x+y)$ and check that $\beta_a$ defined by \eqref{beta X=4} is a $0$-normalized cocycle. Conversely, suppose that $\beta\in Z^2_c(X,G)$ is $0$-normalized. Then $\beta(x,0)=1$ for every $x\in X$. Since $\beta$ is $g$-invariant by Proposition \ref{Prop:invariance_under_g}, we also have $\beta(0,x)=1$ for every $x\in X$ by Lemma \ref{Lm:LatinEq}, and $\beta(x,y) = \beta(g(x,y)) = \beta(0\cdot x,0\cdot y) = \beta(\alpha(x),\alpha(y))$ for every $x$, $y\in X$. By Proposition \ref{Prop:invariance_under_h}, $\beta$ is $h$-invariant, and by Lemma \ref{geometric prog} we have $h(x,y)=(y+x,y)$, so $\beta(x,y)=(y+x,y)$ for every $x$, $y\in X$. Applying $g$, $h$ as indicated, we get
\begin{displaymath}
    \beta(e_1,e_2) \overset{g}{=} \beta(e_1+e_2,e_1) \overset{g}{=} \beta(e_2,e_1+e_2)
    \overset{h}{=} \beta(e_1,e_1+e_2) \overset{g}{=} \beta(e_1+e_2,e_2) \overset{g}{=} \beta(e_2,e_1),
\end{displaymath}
so there is $a\in G$ such that $\beta=\beta_a$. Setting $x=z$ in \eqref{CCC} yields $\beta(xy,x)=\beta(x,yx)\beta(y,x)$ and thus
\begin{align*}
    1 &= \beta(0,e_1) = \beta(e_1\cdot(e_1+e_2),e_1)\\
    &= \beta(e_1,(e_1+e_2)\cdot e_1)\beta(e_1+e_2,e_1) = \beta(e_1,e_2)\beta(e_1+e_2,e_1)=a^2.
\end{align*}
Finally, by Proposition \ref{Prop:cocycle normalized}, $\beta_a\sim\beta_b$ if and only if $a$ and $b$ are conjugate in $G$.
\end{example}

\begin{remark}
In \cite{Mo}, the second cohomology of $X=\gal{\mathbb{Z}_{2}^{2},\alpha}$ was calculated when $G$ is the additive group of a finite field or $G\in\{\mathbb Z,\mathbb Q\}$. Since $G$ is abelian here, $H^2_c(X,G)$ is also an abelian group under the operation $(\beta+\delta)(a,b)=\beta(a,b)+\delta(a,b)$. Our calculations in Example \ref{Ex:Details} agree with those of \cite[Example 2 and Corollary 1.1]{Mo}. We have $H^2_c(X,G)=1$ if $G\in\{\mathbb Z,\mathbb Q\}$ or $G=\mathbb Z_p^k$ with $p$ odd, and $H^2_c(X,\,\mathbb Z_2^k)\cong \mathbb Z_2^k$.
\end{remark}

\begin{example}\label{Ex:NotEmbedding}
Let $X=\gal{ \mathbb{Z}_{2}^{2},\alpha}$ be a doubly transitive quandle and let $G=\mathbb Z_2^2$. By Example \ref{Ex:Details}, every $a\in G$ yields $\beta_a\in Z^2_c(X,G)$, and $\beta_a\sim \beta_b$ if and only if $a=b$ since $G$ is abelian. Example \ref{Ex:Details} also shows that every $\sigma\in \sym{G}$ with $\sigma^2=1$ yields $\beta_\sigma$ in $Z^2_c(X,\sym{G})$, and $\beta_\sigma=\beta_\tau$ if and only if $\sigma$, $\tau$ have the same cycle structure.

Consider now the embedding $j:Z^2_c(X,G)\to Z^2_c(X,\sym{G})$ and note that for every $a\in G$ we have $j(\beta_a) = \beta_{\lambda_G(a)}$. If $a$, $b\in G$ are distinct nonzero elements of $G$ then $\beta_a\not\sim\beta_b$, but $j(\beta_a)\sim j(\beta_b)$ because $\lambda_G(a)$, $\lambda_G(b)$ have the same cycle structure. We conclude that $j$ does not induce an embedding of $H^2_c(X,G)$ into $H^2_c(X,\sym{G})$.
\end{example}

If $X$ is latin, the embedding $j$ commutes with the normalization procedure described in Proposition \ref{Prop:cocycle normalized}.

\begin{proposition}\label{j commutes with normalization}
Let $X$ be a latin quandle, $G$ a group and $\beta\in H^2_c(X,G)$. Then $j(\beta_u)=j(\beta)_u$ for every $u\in X$.
\end{proposition}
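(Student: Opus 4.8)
The plan is to exploit the fact that both the embedding $j$ and the normalization procedure of Proposition \ref{Prop:cocycle normalized} are built entirely out of the group operations of the coefficient group, so that a group homomorphism between coefficient groups must intertwine them. First I would record that, by the blanket remark opening this section, the normalization formula
\[
\beta_u(x,y) = \beta((xy)/u,u)^{-1}\,\beta(x,y)\,\beta(y/u,u)
\]
of Proposition \ref{Prop:cocycle normalized} holds verbatim for a cocycle $\beta\in Z^2_c(X,G)$ with values in the arbitrary group $G$: its derivation uses only \eqref{CCC}, \eqref{CQC} and the latin property of $X$, none of which refer to $G$ being a symmetric group.

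Next I would compute the two sides and check they agree. Recall that $j(\beta)(x,y)=\lambda_G(\beta(x,y))$ and that $\lambda_G:G\to\sym{G}$ is a group homomorphism, so $\lambda_G(ab)=\lambda_G(a)\lambda_G(b)$ and $\lambda_G(a^{-1})=\lambda_G(a)^{-1}$. Applying $\lambda_G$ to the displayed formula and distributing it over the three-fold product yields
\[
j(\beta_u)(x,y) = \lambda_G(\beta((xy)/u,u))^{-1}\,\lambda_G(\beta(x,y))\,\lambda_G(\beta(y/u,u)).
\]
On the other hand, applying the very same normalization formula to the cocycle $j(\beta)\in Z^2_c(X,\sym{G})$ gives
\[
j(\beta)_u(x,y) = j(\beta)((xy)/u,u)^{-1}\,j(\beta)(x,y)\,j(\beta)(y/u,u),
\]
and substituting $j(\beta)(\cdot,\cdot)=\lambda_G(\beta(\cdot,\cdot))$ termwise produces exactly the previous expression. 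Hence $j(\beta_u)=j(\beta)_u$ for every $u\in X$.

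The computation is entirely routine and there is essentially no obstacle beyond bookkeeping; the only point deserving a moment's care is the first step, namely confirming that the normalization formula genuinely transfers from $\sym{S}$ to an arbitrary $G$. This is precisely what the opening remark of the section licenses, so no independent argument is required, and the homomorphism property of $\lambda_G$ does all the remaining work.
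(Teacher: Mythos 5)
Your proof is correct and follows essentially the same route as the paper: apply $\lambda_G$ to the normalization formula of Proposition \ref{Prop:cocycle normalized} and use the homomorphism property of the left regular representation to distribute it over the three factors, recognizing the result as $j(\beta)_u(x,y)$. Your extra remark that the normalization formula transfers to arbitrary coefficient groups is a sensible precaution, and it is exactly what the paper's opening remark of Section \ref{Sc:ArbitraryGroup} asserts.
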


\begin{proof}
We have
\begin{eqnarray*}
    j(\beta_u)(x,y)&=&\lambda_G(\beta_u(x,y))=\lambda_G(\beta((xy)/u,u)^{-1}\beta(x,y)\beta(y/u,u))=\\
    &=&\lambda_G(\beta((xy)/u,u))^{-1}\lambda_G(\beta(x,y))\lambda_G(\beta(y/u,u)))=\\
    &=&j(\beta)_u(x,y)
\end{eqnarray*}
for every $x$, $y \in X$.
\end{proof}

We can now prove Theorem \ref{Th:Main2}. By Proposition \ref{Pr:SC}, a connected quandle $X$ is simply connected if and only if $H^2_c(X,\sym{S})=1$ for every set $S$. Let $X$ be a latin quandle. If $H^2_c(X,G)=1$ for every group $G$, then certainly $H^2_c(X,\sym{S})=1$ for every set $S$. Conversely, suppose that $H^2_c(X,\sym{S})=1$ for every set $S$, let $G$ be a group and let $\beta\in Z^2_c(X,G)$. Let $u\in X$. Since $H^2_c(X,\sym{G})=1$ and $j(\beta)\in Z^2_c(X,\sym{G})$, Corollary \ref{trivial cohomology and normalized} implies $j(\beta)_u=1$. By Proposition \ref{j commutes with normalization}, $\lambda_G(\beta_u(x,y)) = j(\beta_u)(x,y) = j(\beta)_u(x,y) = 1$ and therefore $\beta_u(x,y)=1$ for every $x$, $y\in X$. By Corollary \ref{trivial cohomology and normalized}, $H^2_c(X,G)=1$.

\begin{remark}
Let $X$ be a connected quandle. Are the following conditions equivalent?
\begin{itemize}
\item[(i)] $X$ is simply connected,
\item[(ii)] $H^2_c(X,G)=1$ for every group $G$.
\end{itemize}
\end{remark}

\subsection{Conjugacy quandle cocycle invariants}\label{Sc:CQCinvariants}

We conclude the paper by establishing Corollary \ref{Cr:Main}.

In \cite[Section 5]{CEGS}, a new family of knot invariants was defined by using constant quandle cocycles. Let $X$ be a quandle, $G$ a group and $\beta\in Z^2_c(X,G)$. Let  $(\tau_1,\ldots,\tau_k)$ be all the crossings of an oriented knot $K$ encountered while traveling around $K$ starting from some base point and following the orientation of $K$. For a crossing $\tau$ and coloring $\mathcal C\in\mathrm{col}_X(K)$, let $B(\tau,\mathcal C) = \beta(x_\tau,y_\tau)^{\epsilon_{\tau}}$, where $x_\tau$ is the color on the understrand, $y_\tau$ is the color on the overstrand, and $\epsilon_\tau$ is the sign of the crossing. Let $\varphi(K,\mathcal C) = \prod_{i=1}^k B(\tau_i,\mathcal C)$. For $g\in G$, let $[g]$ be the conjugacy class of $g$ in $G$. Then
\begin{displaymath}
    \varphi_{X,G,\beta}(K) = \setof{[\varphi(K,\mathcal C)]}{\mathcal C\in\mathrm{col}_X(K)}
\end{displaymath}
is a \emph{conjugacy quandle cocycle invariant} of $K$. According to \cite[Theorem 5.5]{CEGS}, this is indeed an invariant of oriented knots.

Let us prove Corollary \ref{Cr:Main}. Let $X$ be a simply connected latin quandle, $G$ a group and $\beta\in Z^2_c(X,G)$. By \cite[Proposition 5.6]{CEGS}, if $\beta$ is cohomologous to the trivial constant cocycle, then $\varphi_{X,G,\beta}(K)$ is trivial. It therefore suffices to show that $H^2_c(X,G)=1$ and this follows from Theorem \ref{Th:Main2}.


\section*{Acknowledgments}

We thank an anonymous referee for several important comments and in particular for pointing out the highly relevant preprint of Clauwens \cite{Clauwens}. The first author would like to thank Prof. David Stanovsk\'{y} for fruitful discussions during his visit to Ferrara in January 2016 from which this work was started and Dr. Giuliano Bianco who introduced him to the topic.

Marco Bonatto partially supported by Simons Foundation Collaboration Grant 210176 to Petr Vojt\v{e}chovsk\'y. Petr Vojt\v{e}chovsk\'y partially supported by University of Denver PROF grant.

\end{document}